\theoremstyle{definition}
\newtheorem{theorem}[equation]{Theorem}
\newtheorem{corollary}[equation]{Corollary}
\newtheorem{lemma}[equation]{Lemma}
\newtheorem{proposition}[equation]{Proposition}
\newtheorem{definition}[equation]{Definition}
\newtheorem{remark}[equation]{Remark}
\newtheorem{example}[equation]{Example}
\numberwithin{equation}{section}
\newcommand{\eps}{\xi}
\newcommand{\vphi}{\varphi}
\newcommand{\mbc}{\mathbf{c}}
\newcommand{\mbe}{\mathbf{e}}
\newcommand{\mbs}{\mathbf{s}}
\newcommand{\mcM}{\mathcal{M}}
\newcommand{\mcR}{\mathcal{R}}
\newcommand{\mcX}{\mathcal{X}}
\newcommand{\mcY}{\mathcal{Y}}
\newcommand{\mfgl}{\mathfrak{g}\mathfrak{l}}
\newcommand{\mfq}{\mathfrak{q}}
\newcommand{\mfU}{\mathfrak{U}}
\newcommand{\msA}{\mathsf{A}}
\newcommand{\msB}{\mathsf{B}}
\newcommand{\msC}{\mathsf{C}}
\newcommand{\msc}{\mathsf{c}}
\newcommand{\mse}{\mathsf{e}}
\newcommand{\msE}{\mathsf{E}}
\newcommand{\msF}{\mathsf{F}}
\newcommand{\msG}{\mathsf{G}}
\newcommand{\msH}{\mathsf{H}}
\newcommand{\msK}{\mathsf{K}}
\newcommand{\msL}{\mathsf{L}}
\newcommand{\msm}{\mathsf{m}}
\newcommand{\msM}{\mathsf{M}}
\newcommand{\msn}{\mathsf{n}}
\newcommand{\msp}{\mathsf{p}}
\newcommand{\msQ}{\mathsf{Q}}
\newcommand{\msR}{\mathsf{R}}
\newcommand{\msS}{\mathsf{S}}
\newcommand{\mst}{\mathsf{t}}
\newcommand{\msu}{\mathsf{u}}
\newcommand{\msV}{\mathsf{V}}
\newcommand{\End}{\mathsf{End}}
\newcommand{\Hom}{\mathsf{Hom}}
\newcommand{\id}{\mathsf{id}}
\newcommand{\wt}{\widetilde}
\newcommand{\lra}{\longrightarrow}
\newcommand{\C}{\mathbb{C}}
\newcommand{\Z}{\mathbb{Z}}
\newcommand{\ot}{\otimes}
\newcommand{\ol}{\overline}
\newcommand{\w}{\omega}
\newcommand{\BD}{\mathsf{BD}}
\newcommand{\BT}{\mathsf{BT}}
\newcommand{\BC}{\mathsf{BC}_{r,s}(q)}
\newcommand{\Iphi}{\mathit{\Phi}}
\newcommand{\Heckl}{\mathsf {HC}_\ell(q)}
\newcommand{\Heckr}{\mathsf {HC}_r(q)}
\newcommand{\Hecks}{ \mathsf {HC}_s(q)}
\newcommand{\BCloc}{ \mathsf {BC}_{r,s}(\mathcal R)}
\newcommand{\qn}{\mathfrak{q}(n)}
\newcommand{\V}{\mathbf{V}}
\newcommand{\er}{e_{r,r+1}}
\title[Quantum walled Brauer-Clifford superalgebras]{Quantum walled Brauer-Clifford superalgebras}
\author[Benkart et al.]{Georgia Benkart$^{1}$, Nicolas Guay$^{2}$, Ji Hye Jung$^{3}$, Seok-Jin Kang$^{4}$, and Stewart Wilcox$^{5}$}
\address{ Department of Mathematics \\
 University of Wisconsin-Madison \\
 Madison, WI 53706-1325, USA }
\email{benkart@math.wisc.edu}
\address{ Department of Mathematical and Statistical Sciences\\
\noindent University of Alberta\\
\noindent CAB 632 \\
\noindent Edmonton, AB  T6G 2G1 \\
\noindent Canada}
\email{nguay@ualberta.ca}
\address{ Department of Mathematical Sciences and Research Institute of Mathematics \\
\noindent Seoul National University \\
\noindent Seoul 151-747 \\
\noindent Republic of Korea}
\email{jung.ji.hye@hotmail.com, sjkang@snu.ac.kr}
\address{San Francisco, CA, USA}
\email{stewbasic@gmail.com}
\thanks{$^{1}$ G.B. acknowledges with gratitude the hospitality of the Banff International Research Station (BIRS),
where her collaboration on this project with N.G. began.}
\thanks{$^{2}$ This work was partly supported by a Discovery Grant of the Natural Sciences
and Engineering Research Council of Canada.}
\thanks{$^{3}$ This work was supported by NRF Grant \#2013-035155, NRF-2010-0019516 and NRF-2013R1A1A2063671.}
\thanks{$^{4}$ This work was partially supported by NRF Grant
\#2013-035155 and NRF Grant \#2013-055408.}
\thanks{$^{5}$ This work was partly supported by a Postdoctoral Fellowship
of the Pacific Institute for the Mathematical Sciences.}
\subjclass[2010]{Primary: 81R50 Secondary: 17B60, 05E10, 57M25,
20G43} \keywords{quantum walled Brauer-Clifford superalgebra, queer
Lie superalgebra, centralizer algebra, bead tangle algebra,
$q$-Schur superalgebra}
\begin{document}

\setlength{\pdfpagewidth}{8.5in} \setlength{\pdfpageheight}{11in}

\begin{abstract}
We introduce a new family of superalgebras, the quantum walled
Brauer-Clifford superalgebras ${\mathsf B}{\mathsf C}_{r,s}(q)$. The
superalgebra ${\mathsf {BC}}_{r,s}(q)$ is a quantum deformation of
the walled Brauer-Clifford superalgebra ${\mathsf {BC}}_{r,s}$ and a
super version of the quantum  walled Brauer algebra. We prove that
${\mathsf {BC}}_{r,s}(q)$ is  the centralizer superalgebra of the
action of  ${\mathfrak U}_{q}({\mathfrak q}(n))$ on the mixed tensor
space $\V_{q}^{r,s}= \V_{q}^{\otimes r} \otimes (\V_q^*)^{\otimes
s}$ when $n \ge r+s$, where  ${\mathbf V}_{q}=\C(q)^{(n|n)}$ is the
natural representation of the quantum enveloping superalgebra
${\mathfrak U}_{q}({\mathfrak q}(n))$ and $\V_q^*$ is its dual
space. We also provide a diagrammatic realization of ${\mathsf
{BC}}_{r,s}(q)$ as the $(r,s)$-bead tangle algebra ${\mathsf
{BT}}_{r,s}(q)$. Finally, we define the notion of $q$-Schur
superalgebras of type $\mathsf{Q}$ and establish their basic
properties.
\end{abstract}

\maketitle

\section*{Introduction}

{\it Schur-Weyl duality} has been one of the most inspiring themes
in representation theory,  because it reveals many hidden
connections between the representation theories of seemingly
unrelated algebras. By the duality functor, one algebra appears as
the centralizer of the other acting on a common representation
space.  Many  interesting and important algebras have been
constructed as centralizer algebras in this way.

For example, the group algebra $\C \Sigma_k$ of the symmetric group
$\Sigma_k$ appears as the centralizer of the ${\mathfrak
{gl}}(n)$-action on $\msV^{\otimes k}$, where $\msV=\C^n$ is the
natural representation of the general linear Lie algebra ${\mathfrak
{gl}}(n)$. Similarly, Hecke algebras, Brauer algebras,
Birman-Murakami-Wenzl algebras and Hecke-Clifford superalgebras are
the centralizer algebras of the action of corresponding Lie
(super)algebras or quantum (super)algebras on the tensor powers of
their natural representations.

There are further generalizations of Schur-Weyl duality on mixed
tensor powers.    Let $\msV^{r,s}=\msV^{\otimes r} \otimes
(\msV^*)^{\otimes s}$ be the mixed tensor space of the natural
representation $\msV$ of ${\mathfrak {gl}}(n)$ and its dual space
$\msV^*$. The centralizer algebra of the ${\mathfrak
{gl}}(n)$-action on $\msV^{r,s}$ is the {\it walled Brauer algebra}
${\mathsf {B}}_{r,s}(n)$. The structure and properties of ${\mathsf
B}_{r,s}(n)$ were first investigated in \cite{BCHLLS, Ko, T}. By
replacing ${\mathfrak {gl}}(n)$ by the quantum enveloping algebra
${\mathfrak U}_{q}({\mathfrak {gl}}(n))$ and $\msV=\C^n$ by
$\msV_{q} = \C(q)^n$, we obtain as the centralizer algebra the {\it
quantum walled Brauer algebra}  studied in \cite{DDS1, DDS2, Ha, KM,
L}.     Super versions of the above constructions have been
investigated with the following substitutions:   Replace ${\mathfrak
{gl}}(n)$ by  ${\mathfrak {gl}}(m|n)$, $\C^n$ by $\C^{(m|n)}$;
${\mathfrak U}_{q}({\mathfrak {gl}}(n))$ by ${\mathfrak
U}_{q}({\mathfrak {gl}}(m|n))$;  and $\C(q)^n$ by $\C(q)^{(m|n)}$ as
in  \cite{LSM1, LSM2}.

The Lie superalgebra ${\mathfrak {q}}(n)$ is commonly referred to as
the \emph{queer Lie superalgebra} because of its unique properties
and the fact that it has no non-super counterpart.  Its natural
representation is the superspace ($\Z_2$-graded vector space)
$\V=\C^{(n|n)}$. The corresponding centralizer algebra
$\End_{\mfq(n)}(\V^{\ot r})$ was studied by Sergeev in \cite{Se},
and it is often referred to as the {\it Sergeev algebra}. Using a
modified version of a technique of Fadeev, Reshetikhin and Turaev,
Olshanski introduced the {\it quantum queer superalgebra}
$\mfU_q(\mfq(n))$ and established an analogue of Schur-Weyl duality.
That is,  he showed that there is a surjective algebra homomorphism
$\rho_{n,q}^{r}: {\mathsf {HC}}_{r} (q) \rightarrow
\End_{\mfq(n)}(\V^{\ot r})$, where ${\mathsf {HC}}_{r} (q)$ is the
{\it Hecke-Clifford superalgebra}, a quantum version of the Sergeev
algebra. Moreover, $\rho_{n,q}^{r}$ is an isomorphism when $n\ge r$.

On the other hand, in \cite{JK}  Jung and Kang considered a super
version of the walled Brauer algebra. For the mixed tensor space
$\V^{r,s} = \V^{\otimes r} \otimes (\V^*)^{\otimes s}$, one can ask,
What is the centralizer of the ${\mathfrak {q}}(n)$-action on
$\V^{r,s}$?  In order to answer this question, they introduced two
versions of  the {\it walled Brauer-Clifford superalgebra}, (which
is called the {\it walled Brauer superalgebra} in \cite{JK}). The
first  is constructed using $(r,s)$-superdiagrams, and the second is
defined by generators and relations. The main results of \cite{JK}
show that these two definitions are equivalent and that there is a
surjective algebra homomorphism $\rho_{n}^{r,s}: {\mathsf
{BC}}_{r,s} \rightarrow \End_{\mfq(n)}(\V^{r,s})$, which is an
isomorphism whenever $n \ge r+s$.

The purpose of this paper is to combine the constructions in
\cite{Ol} and  \cite{JK} to determine the centralizer algebra of the
${\mathfrak {U}}_{q}({\mathfrak {q}}(n))$-action on the mixed tensor
space $\V_{q}^{r,s}:= \V_{q}^{\otimes r} \otimes (\V_{q}^*)^{\otimes
s}$. We begin by introducing  the {\it quantum walled
Brauer-Clifford superalgebra} ${\mathsf {BC}}_{r,s}(q)$ via
generators and relations.  The superalgebra ${\mathsf
{BC}}_{r,s}(q)$ contains the quantum walled Brauer algebra  and the
Hecke-Clifford superalgebra ${\mathsf {HC}}_{r}(q)$ as subalgebras.

We then define an action of ${\mathsf {BC}}_{r,s}(q)$ on the mixed
tensor space $\V_{q}^{r,s}$ that supercommutes with the action of
${\mathfrak {U}}_{q}({\mathfrak{q}}(n))$. As a result,  there is a
superalgebra homomorphism $\rho_{n,q}^{r,s}:{\mathsf {BC}}_{r,s}(q)
\rightarrow \End_{{\mathfrak
{U}}_{q}({\mathfrak{q}}(n))}(\V_{q}^{r,s})$. Actually, defining such
an action is quite subtle and complicated, and we regard this as one
of our main results (Theorem \ref{BCqact}).  We use  the fact that
${\mathsf {BC}}_{r,s}$ is the classical limit of ${\mathsf
{BC}}_{r,s}(q)$ to show that the homomorphism $\rho_{n,q}^{r,s}$ is
surjective and that it is an isomorphism whenever $n \ge r+s$
(Theorem \ref{centraliser}).

We also give a diagrammatic realization of ${\mathsf {BC}}_{r,s}(q)$
as the {\it $(r,s)$-bead tangle algebra ${\mathsf{BT}}_{r,s}(q)$}.
An $(r,s)$-bead tangle is a portion of a planar knot diagram
satisfying the conditions in Definition \ref{def:tangle}. The
algebra ${\mathsf{BT}}_{r,s}(q)$ is a quantum deformation of the
{\it $(r,s)$-bead diagram algebra $\BD_{r,s}$},  which is isomorphic
to the walled Brauer-Clifford superalgebra ${\mathsf{BC}}_{r,s}$
(Theorem \ref{th:BC and BD}). Modifying the arguments in \cite{K},
we prove that the algebra $\BT_{r,s}(q)$ is isomorphic to
${\mathsf{BC}}_{r,s}(q)$ (Theorem \ref{th:BC and BT}), so that
${\mathsf{BC}}_{r,s}(q)$ can be regarded as a diagram algebra.

In the final section, we  introduce  {\it $q$-Schur superalgebras of
type ${\mathsf{Q}}$} and prove that the classical results for
$q$-Schur algebras can be extended to this setting.

%%%%%%%%%%%%%%%%%%%%%%%%%%%%%%%%%%%%%%%%%%%%%%%%%%%%%%%%%%%%%%%%%%%%%%%%%%%%%%%%%%%%%%%%%%%%%%%%%%%%%%%%%%%%%%
\vskip3mm
\section{The walled Brauer-Clifford superalgebras}

To begin, we recall the definition of  the Lie superalgebra
$\mathfrak{q}(n)$
 and its basic properties.
Let ${\tt I} = \{\pm i  \mid  i=1,\dots, n\}$, and set $\Z_2 =
\Z/2\Z$. The superspace  $\mathbf{V} = \C(n|n) = \C^n \oplus \C^n$
has a standard basis $\{v_i  \mid i \in {\tt I}\}$. We say that the
{\it parity} of $v_{i}$ equals $|i|:=|v_i|$, where $|v_i|=1$ if $i <
0$ and $|v_i|=0$ if $i> 0$.

The endomorphism algebra is $\Z_2$-graded,  $\End_{\C}(\V) =
\End_{\C}(\V)_0 \oplus \End_{\C}(\V)_1$,  and has a basis of matrix
units $E_{ij}$ with $-n \le i,j \le n$, $ij \neq 0$, where the
parity of $E_ {ij}$ is $|E_{ij}| = |i| + |j|$ ($\mathsf{mod}\, 2$).
The general linear Lie superalgebra  $\mfgl({n|n})$ is obtained from
$\End_{\C}(\V)$ by using the {\it supercommutator} \
 $$[X,Y] = XY - (-1)^{|X||Y|} YX$$ for homogeneous elements $X$,$Y$.
The map $\iota:  \mfgl({n|n})  \lra  \mfgl(n|n)$ given by $E_{ij}
\mapsto E_{-i,-j}$ is an involutive automorphism of $\mfgl(n|n)$.
Let $J = \sum_{a=1}^n ( E_{a,-a} - E_{-a,a}) \in \mfgl({n|n}) $.

\begin{definition}\label{qn}
The  \emph{queer Lie superalgebra $\mfq(n)$} can be defined
equivalently as either the centralizer of $J$ in $ \mfgl({n|n}) $
(under the supercommutator product)  or the fixed-point subalgebra
of $\mfgl({n|n})$ with respect to the involution $\iota$.
\end{definition}

Identifying $\V$ with the space of $(n|n)$ column vectors and $\{v_i
\mid i \in \tt{I}\}$  with the standard basis for the column
vectors, we have
 $J= \left(
\begin{matrix} 0 & I \\ -I & 0\end{matrix} \right)$,  and $\qn$ can be expressed in the matrix form as
\begin{align*}
\left\{ \left(
                          \begin{array}{cc}
                            A & B \\
                            B & A \\
                          \end{array}
                        \right)  \Big{|} \ A,B \text{ are arbitrary $n \times n$ complex matrices} \right\}.
\end{align*}
Then $\mathfrak{q}(n)$ inherits a $\Z_2$-grading from $\mfgl(n|n)$,
and a basis for $\mfq(n)_0$ is given by $\msE_{ab}^0 = E_{ab} +
E_{-a,-b}$ and  for $\mfq(n)_1$ by $\msE_{ab}^1 =
E_{a,-b}+E_{-a,b}$,  where  $1 \le a,b \le n$.

The superalgebra  $\qn$ acts naturally on $\V$ by matrix
multiplication on the column vectors, and  $\V $ is an irreducible
representation of $\qn$.    The action on $\V$ extends to one on
$\V^{\otimes{k}}$ by
\begin{align} \label{eq:action of qn on Vk}
   g  (v_{i_1} \otimes \cdots \otimes v_{i_k})
   =\sum_{j=1}^{k} (-1)^{(|v_{i_1}|+\cdots+|v_{i_{j-1}}|)|g|}
  v_{i_1}\otimes \cdots \otimes v_{i_{j-1}}  \otimes gv_{i_j}  \otimes v_{i_{j+1}} \otimes \cdots\otimes v_{i_k},
 \end{align}
where $g$ is homogeneous.  It also extends in a similar fashion to
the mixed tensor space $\V^{r,s} :=\V^{\otimes r} \otimes
(\V^*)^{\otimes s}$, where $\V^*$ is the dual representation of
$\V$, and  the action on $\V^*$ is given by
$$(g \w)(v):=-(-1)^{|g||\w|}\w(g v)$$  for homogeneous elements  $g \in \qn,  \w \in \V^*$,  and $v\in \V$.
We assume  $\{\w_i \mid i \in {\tt I} \}$ is the basis of $\V^*$
dual to the standard basis  $\{v_i \mid i \in {\tt I} \}$ of $\V$.

In an effort to construct the  centralizer superalgebra
$\End_{\qn}(\V^{r,s})$,  Jung and Kang \cite{JK}  introduced the
notion of the \emph{walled Brauer-Clifford superalgebra}
$\mathsf{BC}_{r,s}$. The superalgebra $\mathsf{BC}_{r,s}$ is
generated by  even generators
  $s_1, \ldots, s_{r-1},$ $s_{r+1}, \ldots,$ $s_{r+s-1}$,
  $e_{r,r+1}$ and odd generators $c_1, \ldots, c_{r+s}$,
 which satisfy the following defining relations (for $i,j$ in the allowable range):
\begin{equation}
 \begin{aligned}
    &s_i^2=1, \quad  s_is_{i+1}s_i=s_{i+1}s_is_{i+1}, \quad  s_is_j=s_js_i \ \  (|i-j|>1), \\
 & \er^2=0, \quad  \er s_j =s_j \er \ \ (j \neq r-1,r+1),\\
 & \er=\er s_{r-1} \er =\er s_{r+1} \er, \\
& s_{r-1} s_{r+1} \er s_{r+1} s_{r-1} \er  = \er s_{r-1} s_{r+1} \er s_{r+1} s_{r-1},  \\
  &c_i^2=-1 \ \ (1 \leq i \leq r), \quad c_i^2=1 \ \ (r+1 \leq i \leq r+s), \quad  c_i c_j=-c_j c_i \ \ (i \neq j),  \\
  & s_i c_i s_i=c_{i+1}, \quad  s_i c_j=c_j s_i \ \ (j \neq i, i+1),\\
   & c_r \er = c_{r+1} \er, \quad \er c_r =\er c_{r+1},  \\
   &\er c_r \er=0, \quad  \er c_j=c_j \er \ \ (j \neq r, r+1).
  \end{aligned}
\end{equation}

For $1 \le j \le r-1, \ r+1 \le k \le r+s-1,\  1 \le l  \le r$,  and
$r+1 \le m \le r+s$, the action of the generators of
$\mathsf{BC}_{r,s}$ on $\V^{r,s}$ (which is on the right)  is
defined as follows:
$$(v_{i_1} \otimes \cdots \otimes v_{i_r} \otimes \w_{i_{r+1}} \otimes \cdots \otimes \w_{i_{r+s}}) \ s_j
=(-1)^{|i_j||i_{j+1}|} v_{i_1} \otimes \cdots \otimes v_{i_{j-1}}
\otimes v_{i_{j+1}} \otimes v_{i_j} \otimes v_{i_{j+2}} \cdots
\otimes \w_{i_{r+s}},$$
$$(v_{i_1} \otimes \cdots \otimes v_{i_r} \otimes \w_{i_{r+1}} \otimes \cdots \otimes \w_{i_{r+s}}) \ s_k
=(-1)^{|i_k||i_{k+1}|} v_{i_1} \otimes \cdots \otimes \w_{i_{k-1}}
\otimes \w_{i_{k+1}} \otimes \w_{i_k} \otimes w_{i_{k+2}} \cdots
\otimes \w_{i_{r+s}},$$
$$(v_{i_1} \otimes \cdots \otimes v_{i_r} \otimes \w_{i_{r+1}} \otimes \cdots \otimes \w_{i_{r+s}}) \ e_{r,r+1}
=\delta_{i_r,i_{r+1}} (-1)^{|i_r|} \hspace{-0.5mm}\sum_{i=-n}^n
v_{i_1} \otimes \cdots \otimes v_{i_{r-1}} \otimes v_{i} \otimes
\w_{i} \otimes \w_{i_{r+2}} \cdots \otimes \w_{i_{r+s}},$$
$$(v_{i_1} \otimes \cdots \otimes v_{i_r} \otimes \w_{i_{r+1}} \otimes \cdots \otimes \w_{i_{r+s}})  \ c_l
= (-1)^{|i_1| + \cdots + |i_{l-1}|}
 v_{i_1} \otimes \cdots \otimes v_{i_{l-1}} \otimes Jv_{i_l} \otimes v_{i_{l+1}} \otimes  \cdots \otimes \w_{i_{r+s}},$$
$$(v_{i_1} \otimes \cdots \otimes v_{i_r} \otimes w_{i_{r+1}} \otimes \cdots \otimes w_{i_{r+s}}) \ c_m
= (-1)^{|i_1| + \cdots + |i_{m-1}|}
 v_{i_1} \otimes \cdots \otimes \w_{i_{m-1}} \otimes  J^{\tt T}\w_{i_m} \otimes \w_{i_{m+1}} \otimes  \cdots \otimes \w_{i_{r+s}},$$
where $J^{\tt T}$ is the supertranspose of $J$,  and the
\emph{supertranspose} is given by $E_{ij}^{\tt
T}:=(-1)^{(|i|+|j|)|i|}E_{ji}$. By direct calculation, it can be
verified that this action of the generators gives rise to an action
of $\mathsf{BC}_{r,s}$ on $\V^{r,s}$. Thus,  there is an
homomorphism of superalgebras,
\begin{equation*}
\rho_n^{r,s}: \mathsf{BC}_{r,s} \longrightarrow
\End_{\C}(\V^{r,s})^{\rm{op}}.
\end{equation*}

The next theorem, due to Jung and Kang \cite{JK},  identifies a basis for $\mathsf{BC}_{r,s}$. In stating it
and some subsequent results, we use the following notation:
For a nonempty subset $A = \{a_1 <  \cdots  < a_m\}$ of $\{1,\dots, r+s\}$, let  $c_A = c_{a_1}  \cdots  c_{a_m}$, and set $c_{\emptyset} = 1$.    Let  $e_{p,q}  = \vphi e_{r,r+1} \vphi^{-1}$,
where $\vphi=  s_{q-1} \cdots  s_{r+1}  s_p \cdots  s_{r-1}$
for $1 \le p \le r$ and $r+1 \le q \le r+s$.

\begin{theorem}  \hfill \label{repmixtensor}
\begin{itemize}
\item[\rm{(i)}] The actions of $\mathsf{BC}_{r,s}$ and $\qn$ on $\V^{r,s}$ supercommute with each other.
  Thus, there is an algebra homomorphism $\rho_n^{r,s}: \mathsf{BC}_{r,s} \rightarrow \End_{\qn}(\V^{r,s})^{\rm{op}}.$

\item[\rm{(ii)}]  The map $\rho_n^{r,s}$ is surjective.
Moreover, it is an isomorphism if $n \ge r+s$.

\item[\rm{(iii)}] The elements
$$ c_P  e_{p_1,q_1} \cdots  e_{p_a,q_a} \sigma  c_Q$$  such that
\begin{enumerate}\item  $1 \leq p_1 <\cdots <p_a \leq r$;
\item  $ r+1 \leq q_i \leq r+s$,  $i=1,\ldots, a$, are all distinct;
\item $\sigma \in \sum_r \times \sum_s,\
      \sigma^{-1}(p_1) < \cdots < \sigma^{-1}(p_a)$;  and
\item  $P \subseteq \{p_1, \ldots, p_a \}$,  $Q \subseteq \{ 1 ,\ldots, r \} \cup \{r+1, \ldots , r+s \} \backslash \{ \sigma^{-1}(q_1), \ldots, \sigma^{-1}(q_a)\}$
\end{enumerate}
\noindent comprise a basis of $\mathsf{BC}_{r,s}$.
 \item[\rm{(iv)}] $\dim_{\C} \mathsf{BC}_{r,s} =2^{r+s}(r+s)!$.
\end{itemize}
\end{theorem}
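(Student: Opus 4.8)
The four assertions are best proved in the order (i), then the spanning part of (iii), then (ii) together with (iv) and the linear independence in (iii). \emph{Part (i):} since it is already granted that the displayed formulas define an action of $\mathsf{BC}_{r,s}$ on $\V^{r,s}$, the point of (i) is that the operators attached to the generators supercommute with the $\qn$-action \eqref{eq:action of qn on Vk}, so that $\rho_n^{r,s}$ lands in $\End_{\qn}(\V^{r,s})^{\mathrm{op}}$. For $s_j$ and $s_k$ this reduces to the two-leg case and the standard fact that the sign-twisted transposition of adjacent tensor factors commutes with the diagonal action. For $c_l$ and $c_m$ one uses Definition \ref{qn}: $\qn$ is exactly the centralizer of $J$ in $\mfgl(n|n)$ under the supercommutator, so $gJ=(-1)^{|g|}Jg$ for homogeneous $g\in\qn$; hence left multiplication by $J$ on a single leg supercommutes with the $\qn$-action, and the sign $(-1)^{|i_1|+\cdots+|i_{l-1}|}$ in the definition of $c_l$ is precisely what makes this pass through the tensor product (for $c_m$ use the analogous identity for $J^{\mathsf{T}}$ on $\V^*$). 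For $e_{r,r+1}$, realize it as a composite of $\qn$-module maps: the coevaluation $\C\to\V\otimes\V^*$, $1\mapsto\sum_i v_i\otimes\w_i$ (this element is $\qn$-invariant), followed by a $J$-twisted evaluation $\V\otimes\V^*\to\C$, which is $\qn$-equivariant because $J$ implements the relevant duality twist.

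\emph{Spanning part of (iii), and the upper bound in (iv):} using $s_ic_is_i=c_{i+1}$, $s_ic_j=c_js_i$ $(j\neq i,i+1)$, $e_{r,r+1}c_j=c_je_{r,r+1}$ $(j\neq r,r+1)$, $c_re_{r,r+1}=c_{r+1}e_{r,r+1}$, $e_{r,r+1}c_r=e_{r,r+1}c_{r+1}$ and $c_ic_j=-c_jc_i$, any monomial in the generators can be rewritten in the form $c_A\,w\,c_B$ with $w$ a word in the $s_i$ and $e_{r,r+1}$ only. The subalgebra they generate is a quotient of the walled Brauer algebra $\mathsf{B}_{r,s}(0)$ (loop parameter $0$ being forced by $e_{r,r+1}^2=0$), hence is spanned by the $(r+s)!$ elements $e_{p_1,q_1}\cdots e_{p_a,q_a}\sigma$ in the standard normal form; substituting such an expression for $w$ and then normalizing the Clifford factors $c_A,c_B$ (collapsing squares via $c_i^2=\pm1$ and moving indices into the admissible ranges via $c_re_{r,r+1}=c_{r+1}e_{r,r+1}$ and the conjugation defining the $e_{p,q}$) produces an element of the form listed in (iii). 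For each of the $(r+s)!$ walled-Brauer normal forms the admissible pairs $(P,Q)$ number $2^{a}\cdot 2^{(r+s)-a}=2^{r+s}$, so $\dim_{\C}\mathsf{BC}_{r,s}\leq 2^{r+s}(r+s)!$.

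\emph{Parts (ii), (iv), and independence in (iii):} Rearranging tensor factors gives a $\qn$-module isomorphism $\V^{r,s}\otimes(\V^{r,s})^*\cong\V^{\otimes(r+s)}\otimes(\V^*)^{\otimes(r+s)}$, whence $\dim\End_{\qn}(\V^{r,s})=\dim\End_{\qn}(\V^{\otimes(r+s)})$, which equals $2^{r+s}(r+s)!$ for $n\geq r+s$ by Sergeev's classical Schur--Weyl duality for $\qn$ \cite{Se} (the Sergeev algebra on $r+s$ strands has dimension $2^{r+s}(r+s)!$). Surjectivity of $\rho_n^{r,s}$ onto $\End_{\qn}(\V^{r,s})^{\mathrm{op}}$, for every $n$, I would obtain from a first-fundamental-theorem statement for $\qn$ on mixed tensors — $\End_{\qn}(\V^{r,s})$ is spanned by the ``bead diagram'' operators, each of which is $\rho_n^{r,s}$ of a listed element — equivalently by straightening any $\qn$-endomorphism of $\V^{r,s}$ to one of a tensor power of $\V$ via the coevaluation/evaluation maps (images of the $e_{p,q}$, at the cost of inserting factors of $J$, i.e. images of the $c_i$) and invoking Sergeev's surjectivity there. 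Granting surjectivity, for $n\geq r+s$ we get $2^{r+s}(r+s)!\geq\dim_{\C}\mathsf{BC}_{r,s}\geq\dim\End_{\qn}(\V^{r,s})=2^{r+s}(r+s)!$, so all three are equal: $\rho_n^{r,s}$ is an isomorphism for $n\geq r+s$, the spanning set of (iii) is a basis, and — since $\mathsf{BC}_{r,s}$ and the listed elements do not depend on $n$ — (iii) and (iv) hold for all $r,s$.

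\emph{The main obstacle} is precisely this lower bound, i.e. the linear independence of the listed elements. As a self-contained alternative to extracting it from Sergeev's theorem, I would argue it directly for $n\geq r+s$: evaluate $\rho_n^{r,s}$ of a linear combination on test vectors $v_{i_1}\otimes\cdots\otimes v_{i_r}\otimes\w_{i_{r+1}}\otimes\cdots\otimes\w_{i_{r+s}}$ with the $i_j$ pairwise distinct (possible exactly because $n\geq r+s$), recover the walled-Brauer datum $e_{p_1,q_1}\cdots\sigma$ from which legs are matched, and recover the Clifford data $P,Q$ from the resulting signs together with the index flips $i\mapsto -i$ produced by $J$; distinct normal forms then have linearly independent images. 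The recurring difficulty throughout — in checking the defining relations in (i), in the reductions of the spanning argument, and most of all in this evaluation — is the careful bookkeeping of the pervasive $\Z_2$-signs; a secondary difficulty is isolating the precise super first fundamental theorem for $\qn$ on mixed tensor space that is needed to license the surjectivity step.
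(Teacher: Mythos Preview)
The paper does not prove this theorem at all: it is stated with the attribution ``The next theorem, due to Jung and Kang \cite{JK}'' and no proof is given. Parts (i) and (ii) are cited as \cite[Thm.~3.5]{JK} and \cite[Thm.~4.5]{JK} elsewhere in the paper, and the basis in (iii) is attributed to \cite[Thm.~5.1]{JK}. So there is no in-paper argument to compare your proposal against.

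That said, your outline is sound and in fact matches the logical structure one infers from the paper's references to \cite{JK}: a spanning argument by word reduction (your ``Step 1'' is exactly what the paper alludes to when it speaks of ``relations (1)--(8)'' in \cite[Thm.~5.1]{JK}), combined with a lower bound obtained from the representation on $\V^{r,s}$ for $n\ge r+s$. Your observation that $\dim\End_{\qn}(\V^{r,s})=\dim\End_{\qn}(\V^{\otimes(r+s)})$ via the tensor-factor rearrangement $\V^{r,s}\otimes(\V^{r,s})^*\cong\V^{\otimes(r+s)}\otimes(\V^*)^{\otimes(r+s)}$ is a clean way to import Sergeev's dimension count, and your alternative direct independence check on pairwise-distinct test vectors is also viable. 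The one genuinely soft spot you correctly flag yourself: surjectivity of $\rho_n^{r,s}$ for \emph{all} $n$ (not just $n\ge r+s$) requires either a first-fundamental-theorem input or the straightening-to-$\V^{\otimes(r+s)}$ manoeuvre you sketch, and that step needs more than a sentence to make rigorous.
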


%%%%%%%%%%%%%%%%%%%%%%%%%%%%%%%%%%%%%%%%%%%%%%%%%%%%%%%%%%%%%%%%%%%%%%%%%%%%%%%%%%%%%%%%%%%%%%%%%%%%%%%%%%%%%%%%%%%%%%
\vskip3mm
\section{The $(r,s)$-bead diagram algebra $\BD_{r,s}$}

In this section, we give a new diagrammatic realization of the
walled Brauer-Clifford superalgebra $\mathsf{BC}_{r,s}$.

\begin{definition}
  An \emph{$(r,s)$-bead diagram}, or simply a \emph{bead diagram}, is a graph consisting of two rows with $r+s$ vertices in each row such that
  the following conditions hold:
  \begin{enumerate}
\item[{(1)}]  Each vertex is connected by a strand to exactly one other vertex.
\item[{(2)}]  Each strand may (or may not)
have finitely many numbered beads. The bead numbers on a diagram
start with $1$ and are distinct consecutive positive integers.
\item[{(3)}] There is a vertical wall separating the first $r$ vertices from the last $s$ vertices in each row.
\item[{\rm (4)}]  A \emph{vertical strand} connects a vertex on the top row with one on the bottom row, and it
cannot cross the wall.  A  \emph{horizontal strand} connects
vertices in the same row, and  it must cross the wall.
\item[{\rm (5)}]  No loops are permitted  in an $(r,s)$-bead diagram.
\end{enumerate}
\end{definition}

The following diagram is an example of a $(3,2)$-bead diagram.

$${\beginpicture
\setcoordinatesystem units <0.78cm,0.39cm> \setplotarea x from 0 to
5.5, y from -1.5 to 4 \put{$d = $} at 0 1.5 \put{$\bullet$} at  1 3
\put{$\bullet$} at  1 0 \put{$\bullet$} at  2 3  \put{$\bullet$} at
2 0 \put{$\bullet$} at  3 3  \put{$\bullet$} at  3 0 \put{$\bullet$}
at  4 3  \put{$\bullet$} at  4 0 \put{$\bullet$} at  5 3
\put{$\bullet$} at  5 0 \plot 1 3  3 0 / \plot 3 3  1 0 / \plot 4 3
5 0 /

\put{\textcircled{1}} at 2.25 0.3 \put{\textcircled{2}} at 2.3 1.9
\put{\textcircled{3}} at 3.9 2
%\put{\textcircled{4}} at 4.75 2.7

%\arrow <3 pt> [1,2] from 1 0 to  2.4 2.1
%\arrow <3 pt> [1,2] from 3.05 2.05 to 3 2.05
%\arrow <3 pt> [1,2] from 3.05 1 to 3 1

\setdashes  <.4mm,1mm> \plot 3.5 -1   3.5 4 / \setsolid
\setquadratic \plot 2 3  3.5 2 5 3 / \plot 2 0 3 1  4 0 /
\endpicture}$$

Beads can slide along a given strand, but they cannot jump to
another strand nor can they interchange positions on a given strand.
For example, the following diagrams are the same as  $(2,1)$-bead
diagrams.
\begin{center}
${\beginpicture \setcoordinatesystem units <0.85cm,0.425cm>
\setplotarea x from 0 to 4.5, y from -1.5 to 4

\put{$\bullet$} at  1 3  \put{$\bullet$} at  1 0 \put{$\bullet$} at
2 3  \put{$\bullet$} at  2 0 \put{$\bullet$} at  3 3
\put{$\bullet$} at  3 0

\plot 1 3  2 0 /

\put{\textcircled{1}} at 1.8 0.5 \put{\textcircled{2}} at 1.3 0.5
%\put{\textcircled{3}} at 2.7 2.2
\put{\textcircled{3}} at 1.15 2.45

%\arrow <3 pt> [1,2] from 1 0 to  2.4 2.1
%\arrow <3 pt> [1,2] from 3.05 2.05 to 3 2.05
%\arrow <3 pt> [1,2] from 3.05 1 to 3 1

\setdashes  <.4mm,1mm> \plot 2.5 -1   2.5 4 / \setsolid

\setquadratic \plot 2 3  2.5 2 3 3 / \plot 1 0 2 1.2  3 0 /
\endpicture}$
${\beginpicture \setcoordinatesystem units <0.78cm,0.39cm>
\setplotarea x from 0 to 4.5, y from -1.5 to 4 \put{$=$} at -0.5 1.5
\put{$\bullet$} at  1 3  \put{$\bullet$} at  1 0 \put{$\bullet$} at
2 3  \put{$\bullet$} at  2 0 \put{$\bullet$} at  3 3
\put{$\bullet$} at  3 0

\plot 1 3  2 0 /

\put{\textcircled{1}} at 1.45 1.5 \put{\textcircled{2}} at 1.3 0.5
%\put{\textcircled{3}} at 2.3 2.2
\put{\textcircled{3}} at 1.15 2.45

%\arrow <3 pt> [1,2] from 1 0 to  2.4 2.1
%\arrow <3 pt> [1,2] from 3.05 2.05 to 3 2.05
%\arrow <3 pt> [1,2] from 3.05 1 to 3 1

\setdashes  <.4mm,1mm> \plot 2.5 -1   2.5 4 / \setsolid

\setquadratic \plot 2 3  2.5 2 3 3 / \plot 1 0 2 1.2  3 0 /
\endpicture}$
${\beginpicture \setcoordinatesystem units <0.78cm,0.39cm>
\setplotarea x from 0 to 5.5, y from -1.5 to 4 \put{$=$} at -0.5 1.5
\put{$= \quad \cdots$} at  4.5 1.5 \put{$\bullet$} at  1 3  \put{$\bullet$}
at  1 0 \put{$\bullet$} at  2 3  \put{$\bullet$} at  2 0
\put{$\bullet$} at  3 3  \put{$\bullet$} at  3 0

\plot 1 3  2 0 /

\put{\textcircled{1}} at 1.45 1.5 \put{\textcircled{2}} at 2.7 0.5
%\put{\textcircled{3}} at 2.3 2.2
\put{\textcircled{3}} at 1.15 2.45
%\arrow <3 pt> [1,2] from 1 0 to  2.4 2.1
%\arrow <3 pt> [1,2] from 3.05 2.05 to 3 2.05
%\arrow <3 pt> [1,2] from 3.05 1 to 3 1

\setdashes  <.4mm,1mm> \plot 2.5 -1   2.5 4 / \setsolid

\setquadratic \plot 2 3  2.5 2 3 3 / \plot 1 0 2 1.2  3 0 /
\endpicture}$,
\end{center}
while the following diagrams are considered to be different

\begin{center}
${\beginpicture \setcoordinatesystem units <0.78cm,0.39cm>
\setplotarea x from 0 to 3.5, y from -1.5 to 4 \put{$\bullet$} at  1
3  \put{$\bullet$} at  1 0
%\put{$\bullet$} at  2 3  \put{$\bullet$} at  2 0
\put{$\bullet$} at  2 3  \put{$\bullet$} at  2 0

\plot 1 3  1 0 / \plot 2 3 2 0 /

\put{\textcircled{1}} at 1 1 \put{\textcircled{2}} at 1 2

%\arrow <3 pt> [1,2] from 1 0 to  2.4 2.1
%\arrow <3 pt> [1,2] from 3.05 2.05 to 3 2.05
%\arrow <3 pt> [1,2] from 3.05 1 to 3 1

\endpicture}$
${\beginpicture \setcoordinatesystem units <0.78cm,0.39cm>
\setplotarea x from 0 to 3.5, y from -1.5 to 4 \put{$\neq$} at -0.5
1.5 \put{$\bullet$} at  1 3  \put{$\bullet$} at  1 0
%\put{$\bullet$} at  2 3  \put{$\bullet$} at  2 0
\put{$\bullet$} at  2 3  \put{$\bullet$} at  2 0

\plot 1 3  1 0 / \plot 2 3 2 0 /

\put{\textcircled{2}} at 1 1 \put{\textcircled{1}} at 1 2

%\arrow <3 pt> [1,2] from 1 0 to  2.4 2.1
%\arrow <3 pt> [1,2] from 3.05 2.05 to 3 2.05
%\arrow <3 pt> [1,2] from 3.05 1 to 3 1
  \put{,} at  2.7 0
\endpicture}$
${\beginpicture \setcoordinatesystem units <0.78cm,0.39cm>
\setplotarea x from 0 to 4.5, y from -1.5 to 4 \put{$\bullet$} at  1
3  \put{$\bullet$} at  1 0 \put{$\bullet$} at  2 3  \put{$\bullet$}
at  2 0 \put{$\bullet$} at  3 3  \put{$\bullet$} at  3 0

\plot 1 3  2 0 /

\put{\textcircled{1}} at 1.8 0.5 \put{\textcircled{2}} at 1.3 0.5

%\arrow <3 pt> [1,2] from 1 0 to  2.4 2.1
%\arrow <3 pt> [1,2] from 3.05 2.05 to 3 2.05
%\arrow <3 pt> [1,2] from 3.05 1 to 3 1

\setdashes  <.4mm,1mm> \plot 2.5 -1   2.5 4 / \setsolid

\setquadratic \plot 2 3  2.5 2 3 3 / \plot 1 0 2 1.2  3 0 /
\endpicture}$
${\beginpicture \setcoordinatesystem units <0.78cm,0.39cm>
\setplotarea x from 0 to 5.5, y from -1.5 to 4 \put{$\neq$} at 0 1.5
\put{$\bullet$} at  1 3  \put{$\bullet$} at  1 0 \put{$\bullet$} at
2 3  \put{$\bullet$} at  2 0 \put{$\bullet$} at  3 3
\put{$\bullet$} at  3 0

\plot 1 3  2 0 /

\put {.}  at 4.5  0

\put{\textcircled{2}} at 1.8 0.5 \put{\textcircled{1}} at 1.3 0.5

\setdashes  <.4mm,1mm> \plot 2.5 -1   2.5 4 / \setsolid

\setquadratic \plot 2 3  2.5 2 3 3 / \plot 1 0 2 1.2  3 0 /

\endpicture}$
\end{center}

An  $(r,s)$-bead diagram having an even number of beads is regarded
as \emph{even} (resp. \emph{odd}).
Let $\widetilde{\BD}_{r,s}$ be
the superspace with basis consisting of the $(r,s)$-bead diagrams.
We define a multiplication on $\widetilde{\BD}_{r,s}$. For
$(r,s)$-bead diagrams $d_1,d_2$, we put $d_1$ under $d_2$ and
identify the top vertices of $d_1$ with the bottom vertices of
$d_2$. If there is a loop in the middle row, we say $d_1d_2=0$. If
there is no loop in the middle row,  we add the largest bead number
in $d_1$ to each bead number in $d_2$, so that
 if $m$ is the largest bead number in $d_1$,
then a bead numbered $i$ in $d_2$ is now numbered $m+i$  in
$d_1d_2$. Then we concatenate the diagrams.    For example, if
  $${\beginpicture
\setcoordinatesystem units <0.78cm,0.39cm> \setplotarea x from 0 to
4.5, y from -1.5 to 4 \put{$d_1=$} at 0 1.5 \put{$\bullet$} at  1 3
\put{$\bullet$} at  1 0 \put{$\bullet$} at  2 3  \put{$\bullet$} at
2 0 \put{$\bullet$} at  3 3  \put{$\bullet$} at  3 0 \put{,} at 3.5
1.5

\plot 1 3  2 0 /

\put{\textcircled{1}} at 1.8 0.5 \put{\textcircled{2}} at 1.3 0.5
\put{\textcircled{3}} at 2.2 2.4

%\arrow <3 pt> [1,2] from 1 0 to  2.4 2.1
%\arrow <3 pt> [1,2] from 3.05 2.05 to 3 2.05
%\arrow <3 pt> [1,2] from 3.05 1 to 3 1

\setdashes  <.4mm,1mm> \plot 2.5 -1   2.5 4 / \setsolid

\setquadratic \plot 2 3  2.5 2 3 3 / \plot 1 0 2 1.2  3 0 /
\endpicture} \hskip1em
{\beginpicture \setcoordinatesystem units <0.78cm,0.39cm>
\setplotarea x from 0 to 5.5, y from -1.5 to 4 \put{$d_2=$} at 0 1.5
\put{$\bullet$} at  1 3  \put{$\bullet$} at  1 0 \put{$\bullet$} at
2 3  \put{$\bullet$} at  2 0 \put{$\bullet$} at  3 3
\put{$\bullet$} at  3 0 \put{,} at 3.5 1.5

\plot 1 3  2 0 / \plot 2 3 1 0 / \plot 3 3 3 0 /

\put{\textcircled{1}} at 1.15 0.45 \put{\textcircled{2}} at 1.15 2.4

\setdashes  <.4mm,1mm> \plot 2.5 -1   2.5 4 / \setsolid
\endpicture}$$

then
\begin{center}
${\beginpicture \setcoordinatesystem units <0.85cm,0.425cm>
\setplotarea x from 0 to 4.5, y from -1.5 to 4 \put{$d_1d_2=$} at -1
3.5 \put{$\bullet$} at  1 3  \put{$\bullet$} at  1 0 \put{$\bullet$}
at  2 3  \put{$\bullet$} at  2 0 \put{$\bullet$} at  3 3
\put{$\bullet$} at  3 0

\plot 1 3  2 0 /

\put{\textcircled{1}} at 1.8 0.5 \put{\textcircled{2}} at 1.3 0.5
\put{\textcircled{3}} at 2.2 2.4

\setdashes  <.4mm,1mm> \plot 2.5 -1   2.5 4 / \setsolid

%bottom
\put{$\bullet$} at  1 7  \put{$\bullet$} at  1 4 \put{$\bullet$} at
2 7  \put{$\bullet$} at  2 4 \put{$\bullet$} at  3 7
\put{$\bullet$} at  3 4

\plot 1 7  2 4 / \plot 2 7 1 4 / \plot 3 7 3 4 /

\put{\textcircled{4}} at 1.15 4.45 \put{\textcircled{5}} at 1.15 6.4

\setdashes  <.4mm,1mm> \plot 2.5 3   2.5 8 / \setsolid

\setdashes  <.4mm,.3mm> \plot 1 3 1 4 / \plot 2 3 2 4 / \plot 3 3 3
4 / \setsolid

%third
\put{$=$} at 4.5 3.5 \put{$\bullet$} at  6 5  \put{$\bullet$} at  6
2 \put{$\bullet$} at  7 5  \put{$\bullet$} at  7 2 \put{$\bullet$}
at  8 5  \put{$\bullet$} at  8 2 \put{.} at 9 3.5

\plot 7 2 7 5 /

\put{\textcircled{1}} at 7 2.4 \put{\textcircled{2}} at 6.2 2.3
\put{\textcircled{3}} at 7.5 4.3 \put{\textcircled{4}} at 7 3.7
\put{\textcircled{5}} at 6.2 4.8

\setdashes  <.4mm,1mm> \plot 7.5 1 7.5 6 / \setsolid

\setquadratic \plot 6 2 7 3 8 2 / \plot 6 5  7 4.3 8 5 /

\setquadratic \plot 2 3  2.5 2 3 3 / \plot 1 0 2 1.2  3 0 /
\endpicture}$
\end{center}

Observe that $\widetilde{\BD}_{r,s}$ is closed under this product.
If the number of beads in $d_1$ (resp. $d_2$) is $m_1$ (resp. $m_2$)
and $d_1d_2 \neq 0$, then the number of beads in $d_1d_2$ is
$m_1+m_2$. Hence, the multiplication respects the $\Z_2$-grading.
Let $d_1,d_2,d_3 \in \widetilde{\BD}_{r,s}$.  Note that the
connections in $(d_1d_2)d_3$ and $d_1(d_2d_3)$ are the same.  The
strands where the beads are placed and the bead numbers are also the
same in $(d_1d_2)d_3$ and $d_1(d_2d_3)$. Therefore,
$(d_1d_2)d_3=d_1(d_2d_3)$. The identity element of
$\widetilde{\BD}_{r,s}$ is the diagram with no beads such that each
top vertex is connected to the corresponding bottom vertex.

For $1\leq i \leq r-1$, \ $r+1 \leq j \leq r+s-1$, \ $1 \leq k \leq
r$, \ $r+1 \leq l \leq r+s$, let $\BD'_{r,s}$ be the subalgebra of
$\widetilde{\BD}_{r,s}$ generated by the following diagrams:

${\beginpicture \setcoordinatesystem units <0.78cm,0.39cm>
\setplotarea x from 0 to 9, y from -1 to 4 \put{$\mathbf{s}_i:= $}
at 0 1.5 \put{$\bullet$} at  1 0  \put{$\bullet$} at  1 3
\put{$\bullet$} at  2 0  \put{$\bullet$} at  2 3 \put{$\bullet$} at
3 0  \put{$\bullet$} at  3 3 \put{$\bullet$} at  4 0
\put{$\bullet$} at  4 3 \put{$\bullet$} at  5 0  \put{$\bullet$} at
5 3 \put{$\bullet$} at  6 0  \put{$\bullet$} at  6 3 \put{$\bullet$}
at  7 0  \put{$\bullet$} at  7 3 \put{$\bullet$} at  8 0
\put{$\bullet$} at  8 3

\put{$\cdots$} at 1.5 1.5 \put{$\cdots$} at 5.5 1.5 \put{$\cdots$}
at 7.5 1.5 \put{{\scriptsize$i$}} at 3 4 \put{{\scriptsize$i+1$}} at
4 4 \put{,} at 8.5 1 \plot 1 3 1 0 / \plot 2 3 2 0 / \plot 3 3 4 0 /
\plot 4 3 3 0 / \plot 5 3 5 0 / \plot 6 3 6 0 / \plot 7 3 7 0 /
\plot 8 3 8 0 / \setdashes  <.4mm,1mm> \plot 6.5 -1   6.5 4 /
\setsolid
\endpicture}$
${\beginpicture \setcoordinatesystem units <0.78cm,0.39cm>
\setplotarea x from 0 to 9, y from -1 to 4 \put{$\mathbf{s}_j:= $}
at 0 1.5 \put{$\bullet$} at  1 0 \put{$\bullet$} at  1 3
\put{$\bullet$} at 2
 0  \put{$\bullet$} at  2 3 \put{$\bullet$} at  3 0 \put{$\bullet$}
at 3 3 \put{$\bullet$} at  4 0  \put{$\bullet$} at 4 3
\put{$\bullet$} at  5 0  \put{$\bullet$} at  5 3 \put{$\bullet$} at
6 0 \put{$\bullet$} at  6 3 \put{$\bullet$} at  7 0 \put{$\bullet$}
at 7 3 \put{$\bullet$} at  8 0  \put{$\bullet$} at 8 3

\put{$\cdots$} at 1.5 1.5 \put{$\cdots$} at 3.5 1.5 \put{$\cdots$}
at 7.5 1.5 \put{,} at 8.5 1 \put{{\scriptsize$j$}} at 5 4
\put{{\scriptsize$j+1$}} at 6 4

\plot 1 3 1 0 / \plot 2 3 2 0 / \plot 3 3 3 0 / \plot 4 3 4 0 /
\plot 5 3 6 0 / \plot 6 3 5 0 / \plot 7 3 7 0 / \plot 8 3 8 0 /
\setdashes  <.4mm,1mm> \plot 2.5 -1   2.5 4 / \setsolid
\endpicture}$\\

${\beginpicture \setcoordinatesystem units <0.78cm,0.39cm>
%\setplotarea x from 0 to 6, y from -2 to 3
\put{$\mathbf{e}_{r,r+1} := $} at -0.5 1.5 \put{$\bullet$} at  1 0
\put{$\bullet$} at  1 3 \put{$\bullet$} at  3 0  \put{$\bullet$} at
3 3 \put{$\bullet$} at  4 0  \put{$\bullet$} at  4 3 \put{$\bullet$}
at  5 0  \put{$\bullet$} at  5 3 \put{$\bullet$} at  6 0
\put{$\bullet$} at  6 3 \put{$\bullet$} at  8 0  \put{$\bullet$} at
8 3

\put{$\cdots$} at 2 1.5 \put{$\cdots$} at 7 1.5 \put{,} at 8.5 1
\put{{\scriptsize $1$}} at 1 4 \put{{\scriptsize $r$}} at 4 4
\put{{\scriptsize $r+1$}} at 5 4 \put{{\scriptsize $r+s$}} at 8 4
\plot 1 3 1 0 / \plot 3 3 3 0 / \plot 8 3 8 0 / \plot 6 3 6 0 /
%\arrow <3 pt> [1,2] from 2 0 to  2 1.7
%\arrow <3 pt> [1,2] from 1.45 0.75 to  1.4 0.75
%\arrow <3 pt> [1,2] from 4 0 to 3.5 1.5
\setdashes  <.4mm,1mm> \plot 4.5 -1   4.5 4 / \setsolid
\setquadratic \plot 4 3 4.5 2 5 3 / \plot 4 0 4.5 1 5 0  /
\endpicture}$

\vskip3mm $ {\beginpicture \setcoordinatesystem units
<0.78cm,0.39cm> \setplotarea x from 0 to 8, y from -1 to 4
\put{$\mathbf{c}_k: = $} at 0 1.5
\put{$\bullet$} at  1 0    \put{$\bullet$} at  1 3
\put{$\bullet$} at  2 0  \put{$\bullet$} at  2 3
\put{$\bullet$} at  3 0  \put{$\bullet$} at  3 3
\put{$\bullet$} at  4 0  \put{$\bullet$} at  4 3
\put{$\bullet$} at  5 0 \put{$\bullet$} at  5 3
\put{$\bullet$} at  6 0  \put{$\bullet$} at 6 3
\put{$\bullet$} at  7 0  \put{$\bullet$} at  7 3

\put{$\cdots$} at 1.5 1.5
\put{$\cdots$} at 4.5 1.5
\put{$\cdots$} at 6.5 1.5
\put{{\scriptsize $k$}} at 3 4 \put{,} at 7.5 1
\plot 1 3 1 0 / \plot 2 3 2 0  / \plot 3 3 3 0 / \plot 4 3 4 0 / \plot 5 3 5 0
/ \plot 6 3 6 0 / \plot 7 3 7 0 / \put{\textcircled{1}} at 2.95 1.5
%\arrow <3 pt> [1,2] from 3 0 to  3 1.7
\setdashes  <.4mm,1mm> \plot 5.5 -1   5.5 4 /
\endpicture}$ \hskip2em
${\beginpicture \setcoordinatesystem units <0.78cm,0.39cm>
\setplotarea x from 0 to 7.5, y from -1 to 4
\put{$\mathbf{c}_l: =$} at 0 1.5

\put{$\bullet$} at  1 0 \put{$\bullet$} at  1 3
\put{$\bullet$} at  2 0 \put{$\bullet$} at  2 3
\put{$\bullet$} at 3 0 \put{$\bullet$} at  3 3
\put{$\bullet$} at  4 0 \put{$\bullet$} at 4 3
\put{$\bullet$} at  5 0 \put{$\bullet$} at  5 3
\put{$\bullet$} at  6 0 \put{$\bullet$} at  6 3
\put{$\bullet$} at 7 0 \put{$\bullet$} at  7 3

\put{$\cdots$} at 1.5 1.5 \put{$\cdots$} at 3.5 1.5
\put{$\cdots$} at 6.5 1.5 \put{.} at 7.5 1 \put{{\scriptsize $l$}} at 5 4

\put{\textcircled{1}} at 4.95 1.5

\plot 1 3 1 0 / \plot 2 3 2 0  / \plot 3 3 3 0 / \plot 4 3 4 0 /
\plot 5 3 5 0 / \plot 6 3 6 0 / \plot 7 3 7 0 /
 \setdashes  <.4mm,1mm> \plot 2.5 -1   2.5 4 /
\endpicture}$

 Assume  $\mathsf{L}$ is  the (two-sided)
ideal of $\BD'_{r,s}$ generated by the following homogeneous
elements,
\begin{align} \label{def:I_in_BD'}
\mathbf{c}_k^2+1 \ \ (1\leq k \leq r), \ \ \, \mathbf{c}_l^2 -1 \
\  (r+1 \leq l \leq r+s),  \  \text{and  } \
\mathbf{c}_i\mathbf{c}_j  + \mathbf{c}_j\mathbf{c}_i  \ \ \, (1 \le
i \neq j \le r+s),
\end{align}
and let $\BD_{r,s}$ be the quotient  superalgebra
$\BD'_{r,s}/{\mathsf{L}}$.     We say that  $\BD_{r,s}$ is the
\emph{$(r,s)$-bead diagram algebra}, or simply the \emph{bead
diagram algebra}. For simplicity, we identify cosets  in $\BD_{r,s}$
with their diagram representatives and  make the following identifications
in $\BD_{r,s}$:

${\beginpicture \setcoordinatesystem units <0.78cm,0.39cm>
\setplotarea x from 0 to 7.5, y from -1 to 4
\put{$\mathbf{c}_k^2 =$} at 0 1.5
\put{$\bullet$} at  1 0  \put{$\bullet$} at  1 3
\put{$\bullet$} at  2 0  \put{$\bullet$} at  2 3
\put{$\bullet$} at 3 0  \put{$\bullet$} at  3 3
\put{$\bullet$} at  4 0  \put{$\bullet$} at  4 3
\put{$\bullet$} at  5 0  \put{$\bullet$} at 5 3
\put{$\bullet$} at  6 0  \put{$\bullet$} at  6 3
\put{$\bullet$} at  7 0  \put{$\bullet$} at  7 3

\put{$\cdots$} at 1.5 1.5 \put{$\cdots$} at 4.5 1.5 \put{$\cdots$}
at 6.5 1.5 \put{{\scriptsize $k$}} at 3 4
%\put{,} at 7.5 1
\plot 1 3 1 0 / \plot 2 3 2 0  / \plot 3 3 3 0 / \plot 4 3 4 0 /
\plot 5 3 5 0 / \plot 6 3 6 0 / \plot 7 3 7 0 /

\put{\textcircled{1}} at 2.95 1 \put{\textcircled{2}} at 2.95 2

%\arrow <3 pt> [1,2] from 3 0 to  3 1.7
\setdashes  <.4mm,1mm> \plot 5.5 -1   5.5 4 /
\endpicture}$ ${\beginpicture
\setcoordinatesystem units <0.78cm,0.39cm> \setplotarea x from 0 to
8, y from -1 to 4
\put{$= \ -$} at 0 1.5 \put{$,$} at 8 1.5
\put{$\bullet$} at  1 0  \put{$\bullet$} at  1 3
\put{$\bullet$} at 2 0  \put{$\bullet$} at  2 3
\put{$\bullet$} at  3 0 \put{$\bullet$} at  3 3
\put{$\bullet$} at  4 0  \put{$\bullet$} at  4 3
\put{$\bullet$} at  5 0  \put{$\bullet$} at  5 3
\put{$\bullet$} at  6 0  \put{$\bullet$} at  6 3
\put{$\bullet$} at  7 0   \put{$\bullet$} at  7 3

\put{$\cdots$} at 1.5 1.5 \put{$\cdots$} at 4.5 1.5
\put{$\cdots$} at 6.5 1.5
\put{{\scriptsize $k$}} at 3 4

\plot 1 3 1 0 /  \plot 2 3 2 0  / \plot 3 3 3 0 / \plot 4 3 4 0 / \plot 5 3 5 0 / \plot 6 3 6 0
/ \plot 7 3 7 0 /

%\put{\textcircled{1}} at 2.95 1.3
%\put{\textcircled{2}} at 2.95 2.3

%\arrow <3 pt> [1,2] from 3 0 to  3 1.7
\setdashes  <.4mm,1mm> \plot 5.5 -1   5.5 4 /
\endpicture}$

${\beginpicture \setcoordinatesystem units <0.78cm,0.39cm>
\setplotarea x from 0 to 7.5, y from -1 to 4 \put{$\mathbf{c}_l^2 =
$} at 0 1.5 \put{$\bullet$} at  1 0   \put{$\bullet$} at  1 3
\put{$\bullet$} at  2 0   \put{$\bullet$} at  2 3 \put{$\bullet$} at
3 0    \put{$\bullet$} at  3 3 \put{$\bullet$} at  4 0
\put{$\bullet$} at 4 3 \put{$\bullet$} at  5 0     \put{$\bullet$}
at  5 3 \put{$\bullet$} at  6 0     \put{$\bullet$} at  6 3
\put{$\bullet$} at  7 0     \put{$\bullet$} at  7 3

\put{$\cdots$} at 1.5 1.5 \put{$\cdots$} at 3.5 1.5 \put{$\cdots$}
at 6.5 1.5
%\put{.} at 7.5 1
\put{{\scriptsize $l$}} at 5 4

\plot 1 3 1 0 / \plot 2 3 2 0  / \plot 3 3 3 0 / \plot 4 3 4 0 /
\plot 5 3 5 0 / \plot 6 3 6 0 / \plot 7 3 7 0 /
%\arrow <3 pt> [1,2] from 5 0 to  5 1.7
%\put{\textcircled{1}} at 4.95 2.3

\put{\textcircled{1}} at 4.95 1 \put{\textcircled{2}} at 4.95 2

\setdashes  <.4mm,1mm> \plot 2.5 -1   2.5 4 /
\endpicture}$
${\beginpicture \setcoordinatesystem units <0.78cm,0.39cm>
\setplotarea x from 0 to 8, y from -1 to 4 \put{$= \ $} at 0 1.5
\put{$,$} at 8 1.5 \put{$\bullet$} at  1 0   \put{$\bullet$} at  1 3
\put{$\bullet$} at  2 0   \put{$\bullet$} at  2 3 \put{$\bullet$} at
3 0    \put{$\bullet$} at  3 3 \put{$\bullet$} at  4 0
\put{$\bullet$} at 4 3 \put{$\bullet$} at  5 0     \put{$\bullet$}
at  5 3 \put{$\bullet$} at  6 0     \put{$\bullet$} at  6 3
\put{$\bullet$} at  7 0     \put{$\bullet$} at  7 3

\put{$\cdots$} at 1.5 1.5 \put{$\cdots$} at 3.5 1.5 \put{$\cdots$}
at 6.5 1.5
%\put{,} at 9.5 1
\put{{\scriptsize $l$}} at 5 4

\plot 1 3 1 0 / \plot 2 3 2 0  / \plot 3 3 3 0 / \plot 4 3 4 0 /
\plot 5 3 5 0 / \plot 6 3 6 0 / \plot 7 3 7 0 /
%\arrow <3 pt> [1,2] from 5 0 to  5 1.7
%\put{\textcircled{1}} at 4.95 2.3

%\put{\textcircled{1}} at 4.95 1.3
%\put{\textcircled{2}} at 4.95 2.3

\setdashes  <.4mm,1mm> \plot 2.5 -1   2.5 4 /
\endpicture}$

and

${\beginpicture \setcoordinatesystem units <0.78cm,0.39cm>
\setplotarea x from 0 to 8, y from -1 to 4 \put{$\mathbf{c}_i
\mathbf{c}_j =$} at -0.2 1.5

\put{$\bullet$} at  1 0   \put{$\bullet$} at  1 3 \put{$\bullet$} at
2 0    \put{$\bullet$} at  2 3 \put{$\bullet$} at  3 0
\put{$\bullet$} at  3 3 \put{$\bullet$} at  4 0     \put{$\bullet$}
at 4 3 \put{$\bullet$} at  5 0     \put{$\bullet$} at  5 3
\put{$\bullet$} at  6 0     \put{$\bullet$} at  6 3 \put{$\bullet$}
at  7 0     \put{$\bullet$} at  7 3 \put{$\bullet$} at  8 0
\put{$\bullet$} at  8 3

\put{$\cdots$} at 1.5 1.5 \put{$\cdots$} at 4.5 1.5 \put{$\cdots$}
at 7.5 1.5
%\put{,} at 9.5 1
\put{{\scriptsize $i$}} at 3 4 \put{{\scriptsize $j$}} at 6 4

\plot 1 3 1 0 / \plot 2 3 2 0  / \plot 3 3 3 0 / \plot 4 3 4 0 /
\plot 5 3 5 0 / \plot 6 3 6 0 / \plot 7 3 7 0 / \plot 8 3 8 0 /

\put{\textcircled{1}} at 2.95 1.5 \put{\textcircled{2}} at 5.95 1.5
\endpicture}$
${\beginpicture \setcoordinatesystem units <0.78cm,0.39cm>
\setplotarea x from 0 to 8, y from -1 to 4 \put{$= \ - $} at -0.2
1.5 \put{$= \ - \mathbf{c}_j \mathbf{c}_i .$} at 9.5 1.5
\put{$\bullet$} at  1 0   \put{$\bullet$} at  1 3 \put{$\bullet$} at
2 0    \put{$\bullet$} at  2 3 \put{$\bullet$} at  3 0
\put{$\bullet$} at  3 3 \put{$\bullet$} at  4 0     \put{$\bullet$}
at 4 3 \put{$\bullet$} at  5 0     \put{$\bullet$} at  5 3
\put{$\bullet$} at  6 0     \put{$\bullet$} at  6 3 \put{$\bullet$}
at  7 0     \put{$\bullet$} at  7 3 \put{$\bullet$} at  8 0
\put{$\bullet$} at  8 3

\put{$\cdots$} at 1.5 1.5 \put{$\cdots$} at 4.5 1.5 \put{$\cdots$}
at 7.5 1.5
%\put{,} at 9.5 1
\put{{\scriptsize $i$}} at 3 4 \put{{\scriptsize $j$}} at 6 4

\plot 1 3 1 0 / \plot 2 3 2 0  / \plot 3 3 3 0 / \plot 4 3 4 0 /
\plot 5 3 5 0 / \plot 6 3 6 0 / \plot 7 3 7 0 / \plot 8 3 8 0 /

\put{\textcircled{2}} at 2.95 1.5 \put{\textcircled{1}} at 5.95 1.5
\endpicture}$

%Revision%%%%%%%%%%%%%%%%%%%%%%%%%%%%%%%%%%%%%%%%%%%%%%%%%%%%%%%%

Our aim is to prove that the walled Brauer-Clifford superalgebra $\mathsf{BC}_{r,s}$ is
isomorphic to $\BD_{r,s}$.
 Towards this purpose, we adopt the following conventions:
\begin{itemize}
\item[{\rm (i)}] The vertices on the top row (and on the bottom row) of a bead diagram are
 labeled  $1,2, \dots, r+s$ from left to right.
\item[{\rm (ii)}] The top vertex of a vertical strand or the left vertex of a horizontal strand is the  {\it good} vertex of the strand.
\item[{\rm (iii)}] A bead on a horizontal bottom row strand is a bead of \emph{type I}. All other beads are of \emph{type II}.
 \end{itemize}

We construct a bead diagram $\widetilde{d}$ from the bead diagram $d$
by performing the following  steps:
\begin{enumerate}
\item  Keep the same connections between vertices as in $d$.

\item  If the number of beads along a strand is even, delete all beads on that strand.
If there is an odd number of beads on a strand, leave only one bead on it.
Repeat this process for all strands in $d$.
(Hence, there is at most one bead on any strand of $\widetilde{d}$.)
Associate to each remaining bead the good vertex of its strand.

\item  Renumber (starting with $1$) the beads of type I
according to the position of its good vertex from left to right.

\item  Let $m$ be the maximum of bead numbers after Step 3.
Renumber (starting with $m+1$) the beads of type II
according to the position of its good vertex from left to right. \end{enumerate}

The resulting diagram is $\widetilde d$.  Since the definition of $\widetilde{d}$ depends only on the number of beads along a strand
and the good vertices of strands having an odd number of beads, $\widetilde{d}$ does not change
when we slide beads along a strand, so $\widetilde{d}$ is well defined.

\begin{example} \label{ex:d_and_widetilde(d)}
 If  $d$ is as pictured below
$${\beginpicture
\setcoordinatesystem units <0.78cm,0.5cm>
\setplotarea x from 0 to 5.5, y from -1.5 to 4

\put{$d = $} at 0 1.5
\put{$\bullet$} at  1 3 \put{$\bullet$} at  1 0
\put{$\bullet$} at  2 3  \put{$\bullet$} at 2 0
\put{$\bullet$} at  3 3  \put{$\bullet$} at  3 0
\put{$\bullet$} at  4 3  \put{$\bullet$} at  4 0
\put{$\bullet$} at  5 3 \put{$\bullet$} at  5 0
\put{$\bullet$} at  6 3 \put{$\bullet$} at  6 0
\put{$\bullet$} at  7 3 \put{$\bullet$} at  7 0
\put{,} at 7.7 1.5

\plot 2 3  1 0 /
\plot 1 3  3 0 /
\plot 6 3 7 0 /

\put{\textcircled{1}} at 2.8 0.15
\put{\textcircled{2}} at 4.75 0.15
\put{\textcircled{3}} at 2.3 0.25
\put{\textcircled{4}} at 1.35 1
\put{\textcircled{5}} at 1.6 1.65
\put{\textcircled{6}} at 3.5 1
\put{\textcircled{7}} at 3.3 2.7
\put{\textcircled{8}} at 6.5 1.5
\put{\textcircled{9}} at 6.8 2.8
\put{\textcircled{10}} at 6.75 0.75
\put{\textcircled{11}} at 4.5 2.5
\put{\textcircled{12}} at 6.1 2.8

\setdashes  <.4mm,1mm>
\plot 4.5 -1   4.5 4 /
\setsolid

\setquadratic
\plot 3 3  5 2 7 3 /
\plot 4 3  4.5 2.5  5 3 /
\plot 2 0 3.5 1  5 0 /
\plot 4 0 5 1  6 0 /
\endpicture}$$
then
$${\beginpicture
\setcoordinatesystem units <0.78cm,0.5cm>
\setplotarea x from 0 to 5.5, y from -1.5 to 4

\put{$\widetilde{d} = $} at 0 1.5
\put{$\bullet$} at  1 3 \put{$\bullet$} at  1 0
\put{$\bullet$} at  2 3  \put{$\bullet$} at 2 0
\put{$\bullet$} at  3 3  \put{$\bullet$} at  3 0
\put{$\bullet$} at  4 3  \put{$\bullet$} at  4 0
\put{$\bullet$} at  5 3 \put{$\bullet$} at  5 0
\put{$\bullet$} at  6 3 \put{$\bullet$} at  6 0
\put{$\bullet$} at  7 3 \put{$\bullet$} at  7 0
\put{.} at 7.7 1.5

\plot 2 3  1 0 /
\plot 1 3  3 0 /
\plot 6 3 7 0 /

\put{\textcircled{1}} at 2.3 0.25
\put{\textcircled{2}} at 1.2 2.7
\put{\textcircled{3}} at 4.2 2.7
\put{\textcircled{4}} at 6.1 2.7

\setdashes  <.4mm,1mm>
\plot 4.5 -1   4.5 4 /
\setsolid

\setquadratic
\plot 3 3  5 2 7 3 /
\plot 4 3  4.5 2.5  5 3 /
\plot 2 0 3.5 1  5 0 /
\plot 4 0 5 1  6 0 /
\endpicture}$$
\end{example}

 Next we assign a nonnegative integer $\gamma(d)$ to the bead diagram $d$ in the following way:
\begin{enumerate}
\item Assume the bead numbers of type I in $d$ are $1 \le \eta_1 < \eta_2 < \cdots < \eta_p$.
Let $a_{j}$ be the label of the good vertex of the strand in $d$ with
the bead  $\eta_j$.  This determines a  sequence $a_{1}, \ldots, a_{p}$.
Let $\ell_1(d):= | \{(j,k) \ | \ j < k , \ a_{j} > a_{k} \}|$.

\item  Assume the bead numbers of type II in $d$ are $1 \le \vartheta_1 < \vartheta_2 < \cdots < \vartheta_q$.
Let $b_{j}$ be the label of the good vertex of the strand in $d$ with the bead  $\vartheta_j$.
This determines a sequence $b_{1}, \ldots,  b_{q}$. Let $\ell_2(d):= | \{(j,k) \ | \ j < k , \  b_{j} > b_{k} \}|$.

\item  Let $\rho_1(d):= \displaystyle {\sum_{i=1}^r} \ \Bigg \lfloor \frac{ \big |\{  j \in \{ 1, \ldots, p \}\mid  a_j=i \} \big  |}{2} \Bigg \rfloor$, where $\lfloor x \rfloor$ denotes the largest integer not greater than  $x$.

\item Let $\rho_2(d) = \displaystyle {\sum_{i=1}^r} \ \Bigg \lfloor \frac{\big |\{  j \in \{ 1, \ldots, q \}  \ | \  b_j = i \} \big  |}{2} \Bigg \rfloor$.

\item   For each bead  $\eta_j$,  its
{\it passing number} counts the number of  beads $\eta_k$ such that
$\eta_k > \eta_j$ when $\eta_j$ moves to the good vertex of  its
strand.
 Let $p_1(d)$ be the sum of the passing numbers for all $\eta_1, \ldots, \eta_p$.

\item  For each  bead $\vartheta_j$,  its
{\it passing number} counts the number of beads $\vartheta_k$
 with $\vartheta_k < \vartheta_j$ when  $\vartheta_j$ moves to the good vertex of its strand.
 Let $p_2(d)$ be the sum of the passing numbers for all $\vartheta_1, \ldots, \vartheta_q$.

\item  For each bead $\vartheta_j$,  count the number of beads $\eta_k$ such that $\eta_k > \vartheta_j$;  then
let $c(d)$ be the sum of those numbers for all beads $\vartheta_1, \ldots, \vartheta_q$.

\item Let $\alpha(d) := \displaystyle {\sum_{i=r+1}^{r+s}} \ \Bigg \lfloor \frac{ \big | \{  j \in \{ 1, \ldots, q \}  \ | \  b_j = i \} \big  |}{2} \Bigg \rfloor$.

\item  Now set $\beta(d):=\ell_1(d)+\ell_2(d)+\rho_1(d)+\rho_2(d)+p_1(d)+p_2(d)+c(d)$ and $\gamma(d):=\beta(d) + \alpha(d)$.
\end{enumerate}

Since the definition of $\gamma(d)$ depends only on the bead numbers, the number of beads on a strand,
and the good vertex of a strand having a bead,
$\gamma(d)$ is well defined.
All values including $\beta(d),\alpha(d)$, and hence $\gamma(d)$,
are nonnegative integers.

\begin{example} Consider the bead diagram $d$ in Example \ref{ex:d_and_widetilde(d)}.
Three beads $\textcircled{2}, \textcircled{3}, \textcircled{6}$ are of type I, and the rest are of
 type II.
The sequence of labels for the good vertices obtained in Step 1 (resp.  Step 2)
is $a_1,a_2,a_3=2,2,2$ (resp. $b_1, \ldots, b_9=1,2,2,3,6,3,6,4,6$).
From these sequences
we see that $$\ell_1(d)=0, \ \  \ell_2(d)=3, \ \  \rho_1(d)=1, \ \  \rho_2(d)=2,  \text{ and } \alpha(d)=1.$$
\indent When the bead $\textcircled{2}$ moves to the good vertex on its strand,
it must pass the two beads $\textcircled{3}, \textcircled{6}$.
When the beads $\textcircled{3}$ and $\textcircled{6}$ move to that same good vertex,
they do not have to pass a bead with a larger label.  Hence $p_1(d)=2$.
Similarly $p_2(d) = 2$,  since only the beads $\textcircled{9}$ (passing  $\textcircled{7}$)  and $\textcircled{10}$ (passing $\textcircled{8}$)
contribute to  $p_2(d)$.

Only the following beads contribute to $c(d)$:   bead $\textcircled{1}$ with $\textcircled{2}, \textcircled{3}, \textcircled{6}$
and beads $\textcircled{4}$ and $\textcircled{5}$ with $\textcircled{6}$.  Therefore,  $c(d)=5$.

Consequently,   $\beta(d)= 3+1+2+2+2+5 =15, \ \alpha(d)=1$,  and $\gamma(d)=16$.
\end{example}

The set of $(r,s)$-bead diagrams
without any beads or horizontal strands forms  a group under the
multiplication defined in $\widetilde{\BD}_{r,s}$ which is isomorphic to
the product  $\Sigma_r \times \Sigma_s$ of symmetric groups. In what follows,
we identify that group with $\Sigma_r \times \Sigma_s$ but use boldface when
we are regarding an element of  $\Sigma_r \times \Sigma_s$  as a diagram.    We adopt the following conventions
analogous to those for the basis elements of $\mathsf{BC}_{r,s}$, but here we are using the generators for the
subalgebra $\BD_{r,s}'$ of $\widetilde{\BD}_{r,s}$:

For a nonempty subset $A = \{a_1 <  \cdots  < a_m\}$ of $\{1,\dots, r\} \cup \{r+1, \dots, r+s\}$, set  $\mbc_A:= \mbc_{a_1}  \cdots  \mbc_{a_m}$, and let $\mbc_\emptyset = 1$.
Let  $\mbe_{p,q}: = \boldsymbol{\vphi} \mbe_{r,r+1}  \boldsymbol{\vphi}^{-1}$,
where $ \boldsymbol{\vphi}=  \mbs_{q-1} \cdots  \mbs_{r+1}  \mbs_p \cdots  \mbs_{r-1}$
for $1 \le p \le r$ and $r+1 \le q \le r+s$.

\begin{lemma} \label{lem:property_of_widetilde(d)_and_gamma(d)}
\begin{itemize}
\item[{(i)}]  For a bead diagram $d$, the associated diagram $\widetilde{d}$ has an expression of the form
\begin{center} $\mbc_P \mbe_{p_1,q_1} \cdots \mbe_{p_a,q_a} \boldsymbol{\sigma} \mbc_Q,  $\end{center}
  \noindent where
\begin{enumerate}
\item $1 \leq p_1 <\cdots <p_a \leq r$;
\item $r+1 \leq q_i \leq r+s$,  $i=1,\ldots, a$, are all distinct;
\item  $\boldsymbol{\sigma} \in \Sigma_r \times \Sigma_s, \
   \sigma^{-1}(p_1) < \cdots < \sigma^{-1}(p_a)$;  and
\item $P \subseteq \{p_1, \ldots, p_a \}$,
$Q \subseteq \{1, \ldots, r \} \cup \{r+1, \ldots, r+s \} \backslash \{\sigma^{-1}(q_1), \ldots, \sigma^{-1}(q_a) \}$.
\end{enumerate}
Hence,  $\widetilde{d} \in \BD'_{r,s}$.
\item[{(ii)}]  $\gamma(d)=0$ if and only if $d=\widetilde{d}$.
In particular, $\gamma(\widetilde{d})=0$ for all bead diagrams $d$. \end{itemize}
\end{lemma}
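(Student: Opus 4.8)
The plan is to prove the two parts in order, establishing (i) by an explicit "normal form" analysis of the connection pattern of $\widetilde d$ and then deducing (ii) as a consequence of how the statistic $\gamma$ was engineered.

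For part (i), first I would separate the combinatorial data of a bead diagram into its \emph{connection pattern} (which vertices are joined, ignoring beads entirely) and its \emph{bead decoration}. The construction of $\widetilde d$ in Steps 1--4 keeps the connection pattern and produces a decoration in which each strand carries at most one bead, with type-I beads (those on horizontal bottom strands) numbered $1,\dots,m$ by left-to-right order of good vertices and type-II beads numbered $m+1,\dots$ likewise. So I must show that any such "reduced" bead diagram can be written in the stated factored form $\mbc_P\,\mbe_{p_1,q_1}\cdots\mbe_{p_a,q_a}\,\boldsymbol\sigma\,\mbc_Q$. The key is that the connection pattern alone is exactly the data underlying the classical basis of $\mathsf{BC}_{r,s}$ from Theorem \ref{repmixtensor}(iii): the horizontal strands determine the pairs $(p_i,q_i)$ (with $p_1<\dots<p_a$ by convention, and the $q_i$ distinct), and the remaining vertical strands determine a permutation $\boldsymbol\sigma\in\Sigma_r\times\Sigma_s$ normalized by the condition $\sigma^{-1}(p_1)<\dots<\sigma^{-1}(p_a)$. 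I would verify directly from the diagrammatic definition of the generators $\mbs_i,\mbe_{r,r+1},\mbc_k$ and the product in $\widetilde{\BD}_{r,s}$ that the word $\mbc_P\,\mbe_{p_1,q_1}\cdots\mbe_{p_a,q_a}\,\boldsymbol\sigma\,\mbc_Q$ realizes precisely this connection pattern, and then check that the bead placed by each $\mbc$-generator lands on the good vertex of the appropriate strand, so that the type-I beads (produced by the $\mbc_q$'s with $q\in P$, sitting on horizontal bottom strands) get numbered before the type-II beads (produced by $\mbc_Q$, which end up on top strands or on the left ends of horizontal strands). Matching this ordering to Steps 3--4 forces $P\subseteq\{p_1,\dots,p_a\}$ and the stated constraint on $Q$. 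This yields the factorization, and since each generator lies in $\BD'_{r,s}$, we get $\widetilde d\in\BD'_{r,s}$.

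For part (ii), the reverse implication is immediate: if $d=\widetilde d$ then $d$ is already reduced, and I would check from the definitions in Steps 1--9 that every summand $\ell_1,\ell_2,\rho_1,\rho_2,p_1,p_2,c,\alpha$ vanishes for a reduced diagram — $\rho_1,\rho_2,\alpha$ vanish because no strand carries two beads; $p_1,p_2$ vanish because on each strand the bead is already at (or can be taken to be at) the good vertex with nothing to pass; $\ell_1,\ell_2$ vanish because in $\widetilde d$ the type-I (resp.\ type-II) beads are \emph{defined} to be numbered in increasing order of good-vertex position, so the sequences $a_1,\dots,a_p$ and $b_1,\dots,b_q$ are weakly increasing with no inversions; and $c$ vanishes because every type-II bead number exceeds every type-I bead number by Step 4, so there is no type-I bead $\eta_k$ with $\eta_k>\vartheta_j$. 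Hence $\gamma(\widetilde d)=0$, which also gives the "in particular" claim. For the forward implication, suppose $\gamma(d)=0$. Then each of the eight nonnegative summands is zero. From $\rho_1=\rho_2=\alpha=0$ I conclude no strand of $d$ has two or more beads of any combined multiplicity that survives, i.e.\ (after noting the floor functions) every strand has at most one bead — more precisely, I would argue that $\rho_1+\rho_2+\alpha=0$ forces every strand to carry $0$ or $1$ beads, so Step 2 in the construction of $\widetilde d$ changes no connection and removes no bead. From $p_1=p_2=0$ the single bead on each strand is already at the good vertex (nothing to pass). From $\ell_1=\ell_2=0$ the existing numbering of the surviving type-I beads and of the type-II beads is already in increasing good-vertex order, matching Steps 3 and 4 up to the global shift; and $c=0$ ensures type-II numbers already exceed type-I numbers, so Step 4's renumbering also changes nothing. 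Assembling these, $d$ already satisfies all the conditions defining $\widetilde d$, hence $d=\widetilde d$.

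The main obstacle I anticipate is the bookkeeping in part (i): carefully tracking, through the diagrammatic multiplication and the bead-renumbering rule "$d_1d_2$ shifts $d_2$'s beads by the largest bead number of $d_1$," exactly where each $\mbc$-generator's bead ends up and in what order the beads get numbered, so that the normalization conventions ($p_1<\dots<p_a$, $\sigma^{-1}(p_i)$ increasing, $P$ and $Q$ as specified) really do reproduce the reduced diagram $\widetilde d$ rather than some Clifford-equivalent variant. Making this bijection between connection patterns and normal-form words precise — essentially transporting Theorem \ref{repmixtensor}(iii) from $\mathsf{BC}_{r,s}$ to the diagram side — is the crux; once it is in hand, part (ii) is a direct but slightly tedious verification that the nonnegative statistic $\gamma$ was defined precisely so as to detect each way $d$ could fail to be reduced.
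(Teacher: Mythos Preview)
Your proposal is correct and follows essentially the same approach as the paper's proof: for (i) you factor the beadless connection pattern as $\mbe_{p_1,q_1}\cdots\mbe_{p_a,q_a}\boldsymbol{\sigma}$ and then attach the $\mbc_P$ and $\mbc_Q$ factors according to which strands carry type-I and type-II beads in $\widetilde d$; for (ii) you unpack the vanishing of each summand of $\gamma$ exactly as the paper does. One minor remark: in your forward implication, once $\rho_1=\rho_2=\alpha=0$ forces at most one bead per strand, the vanishing of $p_1$ and $p_2$ is automatic (there is nothing to pass), so you need not interpret it as ``the bead is at the good vertex'' --- bead position on a strand is not meaningful anyway since beads slide freely.
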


\begin{proof}  (i) A diagram without beads can be written as a product
  $\mathbf e_{p_1,q_1} \cdots \mathbf e_{p_a,q_a} \boldsymbol{\sigma}$ which satisfies conditions (1), (2),  and (3).
  Indeed, $a$ is the number of horizontal strands in $\widetilde{d}$.
  The number $p_i$ (resp.~$q_i$) is the label of the  good vertex (resp.~right vertex) of the $i$th horizontal bottom row strand
  from left to right.
  The number  $\sigma^{-1}(p_i)$ (resp. $\sigma^{-1}(q_i)$) is the label of the good vertex
  (resp.~right vertex) of the $i$th horizontal top row strand from left to right.
  For example,
  $${\beginpicture
\setcoordinatesystem units <0.78cm,0.5cm>
\setplotarea x from 0 to 5.5, y from -1.5 to 4

\put{$\bullet$} at  1 3 \put{$\bullet$} at  1 0
\put{$\bullet$} at  2 3  \put{$\bullet$} at 2 0
\put{$\bullet$} at  3 3  \put{$\bullet$} at  3 0
\put{$\bullet$} at  4 3  \put{$\bullet$} at  4 0
\put{$\bullet$} at  5 3 \put{$\bullet$} at  5 0
\put{$\bullet$} at  6 3 \put{$\bullet$} at  6 0
\put{$\bullet$} at  7 3 \put{$\bullet$} at  7 0

\plot 2 3  1 0 /
\plot 1 3  3 0 /
\plot 6 3 7 0 /

\setdashes  <.4mm,1mm>
\plot 4.5 -1   4.5 4 /
\setsolid

%%right lower one
\put{$ = $} at 8 1.5

\put{$\bullet$} at  9 -2 \put{$\bullet$} at  9 1
\put{$\bullet$} at  10 -2  \put{$\bullet$} at 10 1
\put{$\bullet$} at  11 -2  \put{$\bullet$} at  11 1
\put{$\bullet$} at  12 -2  \put{$\bullet$} at  12 1
\put{$\bullet$} at  13 -2 \put{$\bullet$} at  13 1
\put{$\bullet$} at  14 -2 \put{$\bullet$} at  14 1
\put{$\bullet$} at  15 -2 \put{$\bullet$} at  15 1

\plot 9 -2  9 1  /
\plot 11 -2  11 1 /
\plot 15 -2 15 1 /

\setdashes  <.4mm,1mm>
\plot 12.5 -3   12.5 2 /
\setsolid

%%right upper one
\put{$\bullet$} at  9 2 \put{$\bullet$} at  9 5
\put{$\bullet$} at  10 2  \put{$\bullet$} at 10 5
\put{$\bullet$} at  11 2  \put{$\bullet$} at  11 5
\put{$\bullet$} at  12 2  \put{$\bullet$} at  12 5
\put{$\bullet$} at  13 2 \put{$\bullet$} at  13 5
\put{$\bullet$} at  14 2 \put{$\bullet$} at  14 5
\put{$\bullet$} at  15 2 \put{$\bullet$} at  15 5
\put{.} at 16 1.5

\plot 9 2  10 5  /
\plot 10 2  11 5 /
\plot 11 2 9 5 /
\plot 12 2  12 5  /
\plot 13 2  15 5 /
\plot 14 2 13 5 /
\plot 15 2  14 5 /

\setdashes  <.4mm,1mm>
\plot 12.5 1   12.5 6 /
\setsolid

\setquadratic
\plot 3 3  5 2 7 3 /
\plot 4 3  4.5 2.5  5 3 /
\plot 2 0 3.5 1  5 0 /
\plot 4 0 5 1  6 0 /

\plot 10 -2  11.5 -1 13 -2 /
\plot 12 -2  13 -1  14 -2 /
\plot 10 1 11.5 0  13 1 /
\plot 12 1 13 0  14 1 /
\endpicture}$$

 From Steps 3 and 4 of the construction of $\widetilde{d}$ from $d$, we have
  $\widetilde{d}=\mbc_P \mbe_{p_1,q_1} \cdots \mbe_{p_a,q_a} \boldsymbol{\sigma} \mbc_Q$,
where $P$ is a set of labels for the good vertices of strands with beads of type I,
and $Q$ is a set of labels for the good vertices of strands with beads of type II in $\widetilde{d}$.
  Since we can slide a bead to the good vertex on its strand, condition (4) above can be satisfied.

(ii) ($\Rightarrow$)
From the assumption that $\gamma(d)=0$, we have that  $\rho_1(d)=\rho_2(d)=\alpha(d)=0$.
Hence, there is at most one bead on each strand in $d$.
Since $c(d)=0$, the bead numbers of the beads of  type I are less than all the bead numbers of the beads of type II.
From  $\ell_1(d)=\ell_2(d)=0$, we deduce that
the sequence of good vertices for the stands with beads of the same type are  arranged in increasing size.
That is, the good vertex  having the  smaller label  is connected to the strand with the bead having the smaller bead number.  From these properties, we determine that nothing is changed when $\widetilde{d}$ is constructed
from $d$, so that $\widetilde{d}=d$.

$(\Leftarrow)$   It is enough to argue that $\gamma(\widetilde{d})=0$.
By Step 2  of the construction of  $\widetilde{d}$, we have
$\rho_1(\widetilde{d})=\rho_2(\widetilde{d})=\alpha(\widetilde{d})=0$.
Also, since there is at most one bead on each strand in $\widetilde{d}$,
 $p_1(\widetilde{d})=p_2(\widetilde{d})=0$.
By Steps 3 and 4, we see that $c(\widetilde{d})=\ell_1(\widetilde{d})=\ell_2(\widetilde{d})=0$.
As a result,  $\beta(\widetilde{d})=\alpha(\widetilde{d})=0$, and hence $\gamma(\widetilde{d})=0$.
\end{proof}

\begin{example}
 For the diagram  $\widetilde{d}$  in Example \ref{ex:d_and_widetilde(d)}, we have
 $p_1=2, p_2=4, q_1=5, q_2=6$,
  $\boldsymbol{\sigma}= \mathbf s_2 \mathbf  s_1 \mathbf  s_5 \mathbf  s_6 $,
  $\sigma^{-1}(2)=3, \sigma^{-1}(4)=4, \sigma^{-1}(5)=7, \sigma^{-1}(6)=5$,
  $P= \{ 2\}$,  and $Q=\{ 1,4,6\}$.
 Consequently, $\widetilde{d}= \mathbf c_2 \mathbf  e_{2,5} \mathbf e_{4,6} \mathbf s_2 \mathbf  s_1 \mathbf  s_5 \mathbf  s_6 \mathbf  c_1 \mathbf  c_4 \mathbf  c_6 \in \BD'_{4,3}$.
  \end{example}

\begin{lemma} \label{lem:L subset M in BD'} The subspace $\msM$ spanned by
$\{d-(-1)^{\beta(d)} \widetilde{d} \mid d \ \hbox{\rm is a bead diagram in} \ \BD'_{r,s}\}$  contains the  two-sided ideal  $\msL$ of
$\BD_{r,s}'$  generated by the elements  $\mbc_k^2+1, \mbc_l^2 -1$,  and
  $\mbc_i \mbc_j + \mbc_j \mbc_i$ in (\ref{def:I_in_BD'}).   \end{lemma}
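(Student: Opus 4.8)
The plan is to reduce the statement to a parity identity for the invariant $\beta$ and then to verify that identity by a local analysis of bead moves. Since $\msL$ is the two-sided ideal of $\BD'_{r,s}$ generated by the listed elements and $\BD'_{r,s}$ is spanned by the bead diagrams it contains, $\msL$ is spanned by the products $d_1 g d_2$, where $d_1,d_2$ are bead diagrams in $\BD'_{r,s}$ and $g$ is one of $\mbc_k^2+1$ $(1\le k\le r)$, $\mbc_l^2-1$ $(r+1\le l\le r+s)$, $\mbc_i\mbc_j+\mbc_j\mbc_i$ $(i\ne j)$. Each such $g$ has the form $g=d^+-\epsilon d^-$ with $\epsilon\in\{1,-1\}$ and $d^\pm$ bead diagrams whose underlying connection pattern is the identity one (the pairs are $(\mbc_k^2,\mathbf 1)$, $(\mbc_l^2,\mathbf 1)$ and $(\mbc_i\mbc_j,\mbc_j\mbc_i)$). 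Because $d^\pm$ has identity connections, a closed loop occurs in $d_1d^+d_2$ exactly when it occurs in $d_1d^-d_2$, namely exactly when $d_1d_2=0$; in that case $d_1gd_2=0\in\msM$ and we are done. Otherwise $e^\pm:=d_1d^\pm d_2$ are bead diagrams, lying in $\BD'_{r,s}$ since $\BD'_{r,s}$ is a subalgebra containing $d_1$, $d_2$ and the $\mbc$'s, and they have identical connections, differing only in their beads.

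Next I would check that $\widetilde{e^+}=\widetilde{e^-}$: the diagram $\widetilde d$ depends only on the connections of $d$, the parity of the number of beads on each strand, and the good vertices of strands carrying an odd number of beads. For $g=\mbc_k^2\pm1$ the diagrams $e^\pm$ differ by two extra beads on one strand, a parity-preserving change; for $g=\mbc_i\mbc_j+\mbc_j\mbc_i$ they carry the same beads on the same strands, with only two labels interchanged. Writing $\widetilde e$ for this common diagram, which lies in $\BD'_{r,s}$ by Lemma~\ref{lem:property_of_widetilde(d)_and_gamma(d)}(i), the generators $e^\pm-(-1)^{\beta(e^\pm)}\widetilde e$ of $\msM$ give
\[
d_1gd_2=e^+-\epsilon e^-\equiv\bigl((-1)^{\beta(e^+)}-\epsilon(-1)^{\beta(e^-)}\bigr)\widetilde e\pmod{\msM}.
\]
Hence it suffices to prove the parity identity $\epsilon=(-1)^{\beta(e^+)-\beta(e^-)}$ for each of the three families.

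This parity computation is the heart of the proof and is where I expect the real work to be. Consider first $g=\mbc_k^2\pm1$, so that $e^+$ arises from $e^-$ by inserting two new consecutive bead labels on a single strand $\tau$ (and shifting the labels inherited from $d_2$). Going through $\beta=\ell_1+\ell_2+\rho_1+\rho_2+p_1+p_2+c$: the contributions to $\ell_1,\ell_2$ and to $c$ come in matched pairs, one for each new bead, so those terms change by even amounts; the relevant $\lfloor\cdot/2\rfloor$-summand increases by exactly $1$, landing in $\rho_1$ or $\rho_2$ when the good vertex $v$ of $\tau$ satisfies $v\le r$ and in $\alpha$ — which is not part of $\beta$ — when $v>r$; and the new pair contributes $0$ or $1$ to $p_1$ (resp.\ $p_2$) according to which of the two new beads is nearer to $v$ along $\tau$, the answer depending on the type of $\tau$ because $p_1$ counts larger labels passed while $p_2$ counts smaller ones. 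To pin down these two variable contributions one uses that the strand of $d_1$ or $d_2$ issuing from the endpoint $v$ of $\tau$ must be vertical, so the first $\mbc$-layer strand met when traversing $\tau$ from $v$ lies on the same side of the wall as $v$, and that successive $\mbc$-layer strands met along $\tau$ lie on \emph{alternating} sides (the $d_1$- or $d_2$-strand joining them is horizontal, hence crosses the wall, whereas a vertical strand cannot). Propagating parities through this alternation shows that $\beta(e^+)-\beta(e^-)$ is odd precisely when $\tau$ carries its beads in column $\le r$, i.e.\ precisely when $k\le r$; this matches $\epsilon=-1$ for $\mbc_k^2+1$ and $\epsilon=+1$ for $\mbc_l^2-1$. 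For $g=\mbc_i\mbc_j+\mbc_j\mbc_i$, the diagram $e^-$ arises from $e^+$ by transposing the two consecutive labels $m+1,m+2$; all $\rho$- and $\alpha$-terms are unaffected, and since each vertex is the good vertex of a unique strand, either the two beads lie on a common strand — in which case the only odd change is a $\pm1$ in $p_1$ or $p_2$, the $\ell$-contribution vanishing because the two labels then share a good vertex — or they lie on strands with distinct good vertices, in which case the only odd change is a $\pm1$ in $\ell_1$ or $\ell_2$ when the two beads have equal type and a $\pm1$ in $c$ when they have opposite type. In all cases $\beta(e^+)-\beta(e^-)$ is odd, matching $\epsilon=-1$.

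The main obstacle is exactly this last paragraph: the parity of $\beta$ under these local moves results from a delicate cancellation (between the $\lfloor\cdot/2\rfloor$-term and a passing-number term in the $\mbc_k^2$ case, and between an $\ell$-term and a $c$-term, or within a single passing-number term, in the anticommutator case), and getting it right requires a careful case split according to whether the strand involved is horizontal or vertical, which side of the wall its good vertex lies on, and how the new or swapped beads are positioned among the existing ones. The structural facts that make every case close are that bead-sliding leaves $\beta$ unchanged (noted just before the statement), that a vertical strand cannot cross the wall while a horizontal one must, and that distinct strands have distinct good vertices.
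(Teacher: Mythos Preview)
Your approach is essentially the same as the paper's: reduce to products $e\,g\,f$ with $e,f$ bead diagrams, observe $\widetilde{e^+}=\widetilde{e^-}$, and then show the parity identity $(-1)^{\beta(e^+)}=\epsilon\,(-1)^{\beta(e^-)}$ in each of the three families. The paper organizes the $\mbc_k^2$ case by directly splitting on the type of the strand $\tau$ (type I; vertical right; ``other cases similar''), while you attempt a more unified account via the observation that successive interface crossings of $\tau$ alternate sides of the wall. That structural fact is correct and your justification of it (the strand issuing from the good vertex $v$ of $\tau$ must be vertical, since otherwise $\tau$ would lie entirely in one boundary row and miss the interface) is sound. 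However, the sentence ``propagating parities through this alternation shows\ldots'' stops just short of an argument: you still have not explained precisely how the alternation pins down whether the mutual $p_1$- or $p_2$-contribution of the two new beads is $0$ or $1$, and how that combines with the $\rho_1/\rho_2$ versus $\alpha$ increment to give an odd change exactly when $k\le r$. The paper is admittedly terse at the same point, but if you intend your argument to replace the case split you should spell out that last step. Your treatment of the anticommutator case matches the paper's exactly.
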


 \begin{proof}   It suffices to show that $e \mbc_k^2 f+ ef, \  e \mathbf c_l^2f -ef$,
  and $e \mathbf c_i \mathbf c_j f + e \mathbf c_j \mathbf c_i f$ belong to $\msM$ for any two bead diagrams
  $e,f \in \BD'_{r,s}$.

 (i) \,  First, we consider  $e \mathbf c_k^2 f + ef$.   In constructing the diagram  $\widetilde{d}$ from a diagram $d$, we delete an even number of beads along each strand.
  Therefore, $\widetilde{e \mbc_k^2 f}=\widetilde{ef}$.

  If the product  $\mbc_k^2$ in  $e\mbc_k^2f$ creates beads of  type I, then $\rho_1(e\mbc_k^2 f)=\rho_1(ef)+1$.
  In this case, $c(e \mathbf c_k^2 f) \equiv c(ef), \ell_1(e \mathbf c_k^2 f) \equiv \ell_1(ef)$,
  and $p_1(e \mathbf c_k^2 f) \equiv p_1(ef) \mod 2$.
  The other values $\ell_2(e \mathbf c_k^2 f)$, $\rho_2(e \mathbf c_k^2 f)$, $p_2(e \mathbf c_k^2 f)$
are the same as $\ell_2(ef), \rho_2(ef), p_2(ef)$, respectively.
  Thus,  $\beta(e\mathbf c_k^2 f) \equiv \beta(ef)+1 \mod 2$.

  If the product $\mbc_k^2$ in $e\mbc_k^2f$ creates beads along a vertical strand on the right-hand side of the wall, then
  $\rho_2(e \mbc_k^2 f)=\rho_2(ef)$.   In this case,
  $p_2(e \mathbf c_k^2 f) \equiv p_2(ef)+1, \ell_2(e \mathbf c_k^2 f) \equiv \ell_2(ef),
  c(e \mathbf c_k^2 f)\equiv c(ef)  \mod 2$. The other values $\ell_1, \rho_1, p_1$ remain the same for
  $e \mbc_k^2 f$ as for $ef$.
  Hence,  $\beta(e \mathbf c_k^2 f) \equiv \beta(ef)+1 \mod 2$.
 The other cases can be checked in a similar manner.

  As a consequence,
    \begin{equation} \label{eq:ec_k^2f+ef}
    \begin{aligned}
   e \mbc_k^2f +ef &=
    e \mbc_k^2 f  -(-1)^{\beta(e \mbc_k^2 f )} \widetilde{e \mbc_k^2f}
      +(-1)^{\beta(e\mbc_k^2 f )} \widetilde{e \mbc_k^2 f } +ef \\
   & =e  \mbc_k^2 f  -(-1)^{\beta(e \mbc_k^2 f)} \widetilde{e \mathbf c_k^2 f }
     +ef-(-1)^{\beta(ef)} \widetilde{ef} \in \msM.
  \end{aligned}
  \end{equation}

 (ii) \, To verify $e \mbc_l^2 f -ef \in \msM$, we can show that $\widetilde{e \mbc_l^2 f}=\widetilde{ef}$
and $\beta(e \mbc_l^2 f)\equiv \beta(ef) \mod 2$ as in (i) and then apply a calculation similar to that in
\eqref{eq:ec_k^2f+ef}.

 (iii) \, To argue that  $e\mathbf{c}_i \mathbf c_j f + e \mathbf c_j \mathbf c_i f \in \msM$, assume
$\mathbf{c}_i$ creates a bead indexed by $a$ and $\mathbf c_j$  a bead indexed by $a+1$  in $e \mathbf{c}_i \mathbf c_j f$.
If we switch the beads containing $a$ and $a+1$,
we obtain the bead diagram $e \mbc_j \mbc_i f$.
Since the number of beads along each strand does not change,
$\widetilde{e \mbc_i \mbc_j f}=\widetilde{e \mbc_j \mbc_i f}.$

 We will show that $\beta(e \mbc_i \mathbf c_j f) \equiv \beta(e \mbc_i \mbc_j f)+1 \mod 2.$
Suppose the beads created by $\mbc_i$ and $ \mbc_j$  are of different types, say type I
for $\mbc_i$ and type II for $\mbc_j$.
Then $c(e \mbc_i \mbc_j f)=c(e \mbc_j \mbc_i f)-1$.
The other values $\ell_1, \ell_2, \rho_1, \rho_2, p_1, p_2$,  and $\alpha$ are the same
in $e \mbc_i \mbc_j f$ and $e \mbc_j \mbc_i f$.
Hence,  $\beta(e \mbc_i \mbc_j f) \equiv
\beta(e \mbc_j \mbc_i f)+1 \mod 2$.  The reverse possibility ($\mbc_i$ type II
and $\mbc_j$ type I)  can be treated similarly.

Now assume both $\mbc_i$ and $ \mbc_j$ create  beads of type I.     If the beads are on  the same strand, then
$$p_1(e \mbc_i \mbc_j f) \equiv p_1(e\mbc_j \mbc_i f) +1 \mod 2,$$
and the other values do not change.
If they lie on different strands, then
$\ell_1(e \mbc_i \mbc_j f) \equiv \ell_1(e \mbc_j \mbc_i f)+1 \mod 2$, and the other values are unchanged.
Therefore,  $\beta(e \mbc_i \mbc_j f) \equiv \beta(e \mbc_j \mbc_i f)+1 \mod 2.$
The case that $\mbc_i$ and $\mbc_j$ create beads of type II can be handled similarly.

Applying a computation like the one in \eqref{eq:ec_k^2f+ef}, we obtain
$e \mbc_i \mathbf c_j f + e \mbc_j \mbc_i f \in \msM$.
\end{proof}

We now prove the main theorem of this section.

\begin{theorem}\label{th:BC and BD}
   The walled Brauer-Clifford superalgebra $\mathsf{BC}_{r,s}$ is isomorphic to the $(r,s)$-bead diagram algebra $\BD_{r,s}$   as associative superalgebras.
\end{theorem}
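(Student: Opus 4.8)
The plan is to construct mutually inverse superalgebra homomorphisms between $\mathsf{BC}_{r,s}$ and $\BD_{r,s}$. First I would define a map $\Phi\colon \mathsf{BC}_{r,s}\to\BD_{r,s}$ by sending each generator to the bead diagram bearing the same name in boldface: $s_i\mapsto\mathbf s_i$, $e_{r,r+1}\mapsto\mathbf e_{r,r+1}$, and $c_i\mapsto\mathbf c_i$. To see this is well defined, one checks that the boldface diagrams satisfy, modulo $\msL$, all the defining relations of $\mathsf{BC}_{r,s}$. This is a finite list of diagrammatic verifications: the braid and commutation relations for the $\mathbf s_i$, the relations $\mathbf e_{r,r+1}^2=0$, $\mathbf e=\mathbf e\mathbf s_{r\pm1}\mathbf e$, the ``far-commutation'' relations, the Clifford relations $\mathbf c_k^2=\mp1$ and $\mathbf c_i\mathbf c_j=-\mathbf c_j\mathbf c_i$ (which hold precisely because we pass to the quotient by $\msL$), the conjugation relations $\mathbf s_i\mathbf c_i\mathbf s_i=\mathbf c_{i+1}$, and the interaction relations $\mathbf c_r\mathbf e=\mathbf c_{r+1}\mathbf e$, $\mathbf e\mathbf c_r\mathbf e=0$, etc. Each of these is a straightforward picture computation; I would present a representative sample and assert the rest are analogous.

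Next I would build the inverse. Define a linear map $\Psi\colon\BD_{r,s}\to\mathsf{BC}_{r,s}$ as follows: given a bead diagram $d$, form $\widetilde d$ and use Lemma~\ref{lem:property_of_widetilde(d)_and_gamma(d)}(i) to write $\widetilde d=\mathbf c_P\mathbf e_{p_1,q_1}\cdots\mathbf e_{p_a,q_a}\boldsymbol\sigma\mathbf c_Q$; then set $\Psi(d):=(-1)^{\beta(d)}\,c_Pe_{p_1,q_1}\cdots e_{p_a,q_a}\sigma c_Q\in\mathsf{BC}_{r,s}$, the corresponding element in the Jung--Kang basis of Theorem~\ref{repmixtensor}(iii). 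First one must see $\Psi$ descends to $\BD_{r,s}=\BD'_{r,s}/\msL$: by construction $\Psi$ kills every element $d-(-1)^{\beta(d)}\widetilde d$, i.e.\ it factors through $\BD'_{r,s}/\msM$, and by Lemma~\ref{lem:L subset M in BD'} we have $\msL\subseteq\msM$, so $\Psi$ is defined on $\BD_{r,s}$. Moreover, since $\widetilde d$ ranges over all diagrams with $\gamma=0$, which (via the same normal-form bookkeeping used in Lemma~\ref{lem:property_of_widetilde(d)_and_gamma(d)}) biject with the Jung--Kang basis of $\mathsf{BC}_{r,s}$, the map $\Psi$ is a linear isomorphism on underlying spaces provided we already know $\BD_{r,s}$ has dimension $2^{r+s}(r+s)!$; this dimension count I would establish by showing the classes of the $\widetilde d$ span $\BD_{r,s}$ (every diagram equals $\pm\widetilde d$ in the quotient, by the sliding and $\msL$ relations, exactly the content of ``$d\equiv(-1)^{\beta(d)}\widetilde d$ mod $\msM$'') and are linearly independent (they hit distinct basis elements under $\Phi^{-1}$-to-be, or alternatively map to a known basis).

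It then remains to check $\Phi$ and $\Psi$ are mutually inverse. One direction, $\Phi\circ\Psi=\mathrm{id}$, is clean: $\Psi(d)$ is a word in the generators, $\Phi$ sends it to the corresponding word in boldface generators, which multiplies out in $\widetilde{\BD}_{r,s}$ to $(-1)^{\beta(d)}\widetilde d$, and $\widetilde d\equiv(-1)^{\beta(d)}d$ in $\BD_{r,s}$ by the sign bookkeeping, so $\Phi(\Psi(d))=d$. For $\Psi\circ\Phi=\mathrm{id}$ it suffices to check it on the generators of $\mathsf{BC}_{r,s}$: e.g.\ $\mathbf s_i$ is already its own normal form with $\beta=0$, and $\Psi(\mathbf s_i)=s_i$; similarly for $\mathbf e_{r,r+1}$ and the $\mathbf c_k$. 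Once $\Phi$ is known to be a homomorphism and $\Psi$ its two-sided inverse on generators, $\Psi$ is automatically an algebra homomorphism and the two are inverse isomorphisms of superalgebras; grading compatibility is immediate since each generator has the same parity on both sides.

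\textbf{Main obstacle.} The genuinely delicate part is the normalization by $(-1)^{\beta(d)}$ and showing $\Phi\circ\Psi=\mathrm{id}$ and $\Psi$ well-defined are compatible: one must verify that for \emph{every} bead diagram $d$ one has $d=(-1)^{\beta(d)}\widetilde d$ inside $\BD_{r,s}$, not merely modulo $\msM$. This follows by combining bead-sliding (free in $\BD'_{r,s}$), the sign relations $\mathbf c_i\mathbf c_j=-\mathbf c_j\mathbf c_i$ and the bead-annihilation relations $\mathbf c_k^2=\mp1$ from $\msL$ — precisely the computation packaged in Lemma~\ref{lem:L subset M in BD'} — but one has to be careful that the exponent $\beta(d)$ correctly accumulates all the transposition signs and passing signs incurred while sorting beads into normal form; the four-part definition of $\gamma(d)$ was engineered exactly for this, and matching it term-by-term against the sequence of elementary moves is the crux of the argument.
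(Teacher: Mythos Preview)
Your construction of $\Phi$ coincides with the paper's $\phi_{r,s}$, and your linear map $\Psi$ is essentially the paper's $\psi_{r,s}$ composed with the projection $\pi_{r,s}$. The logical structure of your argument, however, has a gap precisely at what you call the ``main obstacle.'' To verify $\Phi\circ\Psi=\id_{\BD_{r,s}}$ you need $d \equiv (-1)^{\beta(d)}\widetilde d$ \emph{modulo $\msL$}, i.e.\ $\msM\subseteq\msL$. You appeal to Lemma~\ref{lem:L subset M in BD'}, but that lemma establishes only the reverse inclusion $\msL\subseteq\msM$. Showing $\msM\subseteq\msL$ directly would require, for an arbitrary bead diagram $d\in\BD'_{r,s}$, an explicit reduction to $\pm\widetilde d$ using only the ideal relations --- and since the generators $\mathbf c_k$ are identity diagrams with a bead in a fixed column, this is not automatic for beads sitting on nontrivial strands; one would first have to factor $d$ as a word in the generators and then track the sign carefully through a sequence of elementary moves. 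You sketch this but do not carry it out, and the appeal to Lemma~\ref{lem:L subset M in BD'} is misdirected. Separately, your check of $\Psi\circ\Phi=\id$ on generators alone does not suffice, since $\Psi$ is only known to be linear at that stage.

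The paper sidesteps this entirely. Rather than proving $\msM\subseteq\msL$ first, it uses the inclusion $\msL\subseteq\msM$ to produce a surjection $\pi_{r,s}:\BD_{r,s}\twoheadrightarrow\BD'_{r,s}/\msM$, and then observes that the composite $\pi_{r,s}\circ\phi_{r,s}=\psi_{r,s}$ sends the Jung--Kang basis of $\mathsf{BC}_{r,s}$ to the classes of the diagrams $\widetilde d$ (those with $\gamma=0$), which are linearly independent in $\BD'_{r,s}/\msM$ because $\msM$ is spanned by differences $d-(-1)^{\beta(d)}\widetilde d$ with $\gamma(d)\ge 1$. Hence $\psi_{r,s}$ is injective, so $\phi_{r,s}$ is injective, and therefore an isomorphism. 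The equality $\msL=\msM$ then drops out as a corollary. Your dimension-count remark is actually within reach of this same idea: from the surjection $\BD_{r,s}\to\BD'_{r,s}/\msM$ you get that the $2^{r+s}(r+s)!$ normal-form diagrams are independent in $\BD_{r,s}$, while surjectivity of $\Phi$ gives $\dim\BD_{r,s}\le 2^{r+s}(r+s)!$; together these finish the proof without ever needing $\Phi\circ\Psi=\id$ directly.
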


\begin{proof}  Using the defining relations, we see that
the linear map  $\phi_{r,s}: \mathsf{BC}_{r,s} \rightarrow \BD_{r,s}$ specified by
\begin{equation}\label{eq:phi-rs}s_i \mapsto \mathbf{s}_i, \ \ s_j \mapsto \mathbf{s}_j, \ \
e_{r,r+1} \mapsto \mathbf{e}_{r,r+1},  \ \ c_k \mapsto \mathbf{c}_k,   \text{  and  }
c_{l} \mapsto \mathbf{c}_l,\end{equation} is
a well-defined superalgebra epimorphism.

Recall that $\BD_{r,s}:=\BD'_{r,s}/\msL$, where $\msL$ is the two-sided ideal
generated by the elements in  (\ref{def:I_in_BD'}).    As $\msL \subseteq \msM$,
by Lemma \ref{lem:L subset M in BD'},  there is  well-defined linear map
$\pi_{r,s}:\BD_{r,s} \rightarrow \BD'_{r,s}/\msM$ such that $\pi_{r,s}(d +\msL)=d+\msM$ for $d\in \BD'_{r,s}$.
Since $\{c_P e_{p_1, q_1} \cdots e_{p_a,q_a} \sigma c_Q \}$ is a basis of $\mathsf{BC}_{r,s}$
by Theorem \ref{repmixtensor} (iii),  we can define a linear map
$\psi_{r,s}: \mathsf{BC}_{r,s} \rightarrow \BD'_{r,s}/\msM$ such that
$$\psi_{r,s}(c_P e_{p_1, q_1} \cdots e_{p_a,q_a} \sigma c_Q)
= \mathbf c_P \mathbf e_{p_1, q_1} \cdots \mathbf e_{p_a,q_a} \boldsymbol{\sigma} \mathbf c_Q +\msM.$$
Moreover,  $\pi_{r,s} \circ \phi_{r,s} =\psi_{r,s}$. Therefore, if  we can show that
$\psi_{r,s}$ is injective,  it will follow that $\phi_{r,s}$ is injective (hence, an isomorphism).

By Lemma \ref{lem:property_of_widetilde(d)_and_gamma(d)}\,(ii),  when  $\gamma(d)=0$ for a bead diagram
$d$,   then
$d-(-1)^{\beta(d)} \widetilde{d}=0$.
Thus, $\msM$ is
spanned by the elements $d-(-1)^{\beta(d)}\widetilde{d}$ with $\gamma(d) \ge 1$.
Note that
$$\gamma(\mathbf c_P \mathbf e_{p_1, q_1} \cdots \mathbf e_{p_a,q_a} \boldsymbol{\sigma} \mathbf c_Q)=0.$$
Therefore,  the set $\{\mathbf c_P \mathbf e_{p_1, q_1} \cdots \mathbf e_{p_a,q_a} \boldsymbol{\sigma} \mathbf c_Q + \msM \}$ of
these elements  is linearly independent in $\BD'_{r,s}/\msM$,
so  $\psi_{r,s}$ is indeed injective.
\end{proof}

\begin{corollary}  The relation $\msL=\msM$ holds.  In particular,
\begin{equation}\label{eq:Lbasis} \{d-(-1)^{\beta(d)} \widetilde{d} \mid d \ \hbox{\rm is a bead diagram in} \ \BD'_{r,s}, \ \gamma(d) \ge 1\}\end{equation}  is a basis of the two-sided ideal $\msL$.
\end{corollary}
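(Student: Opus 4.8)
The plan is to leverage the inclusion $\msL \subseteq \msM$ from Lemma~\ref{lem:L subset M in BD'} together with two facts already established in the proof of Theorem~\ref{th:BC and BD}: that $\phi_{r,s}\colon \mathsf{BC}_{r,s} \to \BD_{r,s} = \BD'_{r,s}/\msL$ is an isomorphism, and that the composite $\psi_{r,s} = \pi_{r,s}\circ\phi_{r,s}\colon \mathsf{BC}_{r,s}\to\BD'_{r,s}/\msM$ is injective, where $\pi_{r,s}\colon \BD'_{r,s}/\msL \onto \BD'_{r,s}/\msM$ is the canonical projection (well defined precisely because $\msL\subseteq\msM$), which satisfies $\Ker\pi_{r,s}=\msM/\msL$.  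First I would deduce that $\pi_{r,s}$ is injective: given $x\in\Ker\pi_{r,s}$, surjectivity of $\phi_{r,s}$ lets me write $x=\phi_{r,s}(y)$, and then $\psi_{r,s}(y)=\pi_{r,s}(x)=0$ forces $y=0$, hence $x=0$.  Therefore $\msM/\msL=\Ker\pi_{r,s}=0$, that is, $\msM=\msL$.

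For the basis claim I would first trim the spanning set $\{\,d-(-1)^{\beta(d)}\widetilde{d}\,\}$ of $\msM$: if $\gamma(d)=0$, then since $\gamma(d)=\beta(d)+\alpha(d)$ with $\beta(d),\alpha(d)\ge 0$ we have $\beta(d)=0$, and $d=\widetilde{d}$ by Lemma~\ref{lem:property_of_widetilde(d)_and_gamma(d)}\,(ii), so $d-(-1)^{\beta(d)}\widetilde{d}=0$ and that $d$ contributes nothing.  Hence $\msL=\msM$ is spanned by the set in~\eqref{eq:Lbasis}.  To finish I would verify that~\eqref{eq:Lbasis} is linearly independent by a triangularity argument in the bead-diagram basis of $\BD'_{r,s}$: in a finite relation $\sum_d\lambda_d\bigl(d-(-1)^{\beta(d)}\widetilde{d}\bigr)=0$ with all $\gamma(d)\ge 1$, the coefficient with which a fixed indexing diagram $d_0$ occurs on the left-hand side is precisely $\lambda_{d_0}$, because the indexing diagrams $d$ are pairwise distinct and every $\widetilde{d}$ that occurs has $\gamma(\widetilde{d})=0\ne\gamma(d_0)$; thus all $\lambda_{d_0}$ vanish.

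I do not anticipate a genuine obstacle, since the substantive content is packaged in Lemma~\ref{lem:L subset M in BD'} and in the proof of Theorem~\ref{th:BC and BD}.  The only point that needs care is bookkeeping: making sure the projection $\pi_{r,s}$ being used is literally the one appearing in that proof (so that $\pi_{r,s}\circ\phi_{r,s}=\psi_{r,s}$ holds on the nose), and that its well-definedness rests on, rather than silently assumes, the inclusion $\msL\subseteq\msM$ that we are promoting to an equality.
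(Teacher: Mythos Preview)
Your proposal is correct and follows essentially the same route as the paper: deduce injectivity of $\pi_{r,s}$ from the isomorphism $\phi_{r,s}$ and the injectivity of $\psi_{r,s}=\pi_{r,s}\circ\phi_{r,s}$, conclude $\msL=\msM$, then use $\gamma(\widetilde d)=0<\gamma(d)$ for the linear independence. Your triangularity argument simply makes explicit what the paper compresses into a single sentence.
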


\begin{proof}
By Lemma \ref{lem:L subset M in BD'} we have that $\msL \subseteq \msM$.
  Since  the mapping $\phi_{r,s}$ in \eqref{eq:phi-rs}  is an isomorphism and $\pi_{r,s}=\psi_{r,s} \circ \phi^{-1}_{r,s}$,
 we know that $\pi_{r,s}$ is injective.
  Thus, for $m \in \msM$, \  $\pi_{r,s}(m+\msL)=0+\msM$ implies that $m \in\msL$.

  From the proof of Theorem \ref{th:BC and BD}, we have
  that  the set in \eqref{eq:Lbasis} spans $\msM$ ($=\msL$).
  Since $\gamma(d) \ge 1$ and $\gamma(\widetilde{d})=0$,  it follows that \eqref{eq:Lbasis} is
 a linearly independent set.
\end{proof}

%%%%%%%%%%%%%%%%%%%%%%%%%%%%%%%%%%%%%%%%%%%%%%%%%%%%%%%%%%%%%%%%%%%%%%%%%%%%%%%%%%%%%%%%%%%%%%%%%%%%%%%%%%%%%%%%
\vskip3mm
\section{The quantum walled Brauer-Clifford superalgebra $\BC$}

Let $q$ be an indeterminate and $\C(q)$ be the field of rational functions in $q$.
Set $\V_q = \C(q) \ot_{\C} \V =  \C(q) \ot_{\C} \C(n|n)$.  Corresponding to any $X = \sum_{k} Y_k \ot  Z_k
\in \left(\End_{\C(q)}(\V_q)\right)^{\ot 2}$,  let   $X^{12} = \sum_k  Y_k \ot Z_k \ot \id$,
$X^{13} = \sum_k  Y_k \ot  \id \ot  Z_k$,   and $X^{23} =  \sum_k   \id \ot Y_k \ot  Z_k$ in $\left(\End_{\C(q)}(\V_q) \right)^{\ot 3}$, where $\id = \id_{\V_q}$.

Let $\xi = q - q^{-1}$ and define $S  = \sum_{i,j\in {\tt I}} S_{ij} \ot E_{ij} \in \left(\End_{\C(q)}(\V_q)
\right)^{\ot 2}$ by
\begin{equation}
S = \sum_{i,j\in {\tt I}} q^{(\delta_{ij} + \delta_{i,-j})(1-2|j|)}
E_{ii} \ot E_{jj} + \eps \sum_{i,j\in {\tt I},\, i<j} (-1)^{|i|}
(E_{ji}+E_{-j,-i}) \ot E_{ij}. \label{Sfor2}
\end{equation}
$S$ is known to satisfy the quantum Yang-Baxter equation $S^{12}S^{13}S^{23} = S^{23}S^{13}S^{12}$. In \cite{Ol}, Olshanski constructed  a quantization of $\mfU(\mfq(n))$ of $\mfq(n)$ in terms of $S$.

\begin{definition}{\rm{\cite{Ol}}} \label{Uqq}
The \emph{quantum queer superalgebra} $\mfU_q(\mfq(n))$  is the unital associative
superalgebra over $\C(q)$ generated by elements $\msu_{ij}$ with $i\le j$ and
$i,j\in {\tt I} = \{\pm i  \mid  i=1,\dots, n\}$,  which satisfy the following relations: \begin{equation} \msu_{ii} \msu_{-i,-i} = 1 = \msu_{-i,-i}\msu_{ii}, \qquad  U^{12} U^{13} S^{23} = S^{23} U^{13} U^{12},  \label{rttf} \end{equation} where $U= \sum_{i,j \in {\tt I}, \, i\le j} \msu_{ij} \ot E_{ij}$,
and the last equality holds in $\mfU_q(\mfq(n)) \ot_{\C(q)} \left(\End_{\C(q)}(\V_q)\right)^{\ot 2}$.
The $\Z_2$-degree of $\msu_{ij}$ is $|i|+|j|$.
\end{definition}

By the construction, the assignment $\msu_{ij} \mapsto S_{ij}$ is a representation of $\mfU_q(\mfq(n))$
on $\V_q$ (see \cite[Sec.~4]{Ol}).
The superalgebra $\mfU_q(\mfq(n))$ is a Hopf superalgebra with coproduct $\Delta(U) = U^{13} U^{23} \in (\mfU_q(\mfq(n)))^{\ot 2} \ot_{\C(q)} \End_{\C(q)}(\V_q)$,  or more explicitly, $ \Delta(\msu_{ij}) = \sum_{k \in {\tt I}} (-1)^{(|i|+|k|)(|k|+|j|)} \msu_{ik} \ot \msu_{kj}$. The counit is given by $\varepsilon(U) = 1$ and the  antipode by $U \mapsto U^{-1}$.

Let $\V_q^{r,s}:=\V_q^{\ot r} \ot_{\C(q)} (\V_q^*)^{\ot s}$ be the mixed tensor space of $\V_q$ and $\V_q^*$.
Then $\V_q^{r,s}$ is a representation of $\mfU_q(\mfq(n))$ via the coproduct and antipode mappings.
To describe the structure of the centralizer superalgebra
$\End_{\mathfrak{U}_q(\mathfrak{q}(n))}(\V_q^{r,s})$,
we introduce the \emph{quantum walled Brauer-Clifford superalgebra} $\BC$.

\begin{definition} \label{def:BCq}
  The \emph{quantum walled Brauer-Clifford superalgebra} $\BC$ is the associative
superalgebra over $\C(q)$  generated by even elements  $\mst_1,\mst_2,\ldots, \mst_{r-1},\mst_1^*, \mst_2^*, \ldots, \mst_{s-1}^*$,  $\mse$ and odd elements
   $\msc_1,\msc_2,\ldots, \msc_r, \msc_1^*, \msc_2^*, \ldots, \msc_s^*$
satisfying the following defining relations (for $i,j$ in the allowable range):
\begin{equation}
\begin{aligned}  \label{def:BC(q)}
\allowdisplaybreaks
&\mst_i^2-(q-q^{-1})\mst_i-1=0, \; \mst_i\mst_{i+1}\mst_i = \mst_{i+1} \mst_i \mst_{i+1}, &&(\mst_i^*)^2-(q-q^{-1})\mst_i^*-1=0, \; \mst_i^*\mst^*_{i+1}\mst^*_i = \mst^*_{i+1} \mst^*_i \mst^*_{i+1},\\
& \mst_i \mst_j = \mst_j\mst_i  \quad (|i-j|>1), \;\; \mst_i \mst_j^*=\mst_j^*\mst_i,  &&\mst^*_i \mst^*_j = \mst^*_j\mst^*_i  \quad (|i-j|>1), \\
&\mse^2=0, \; \mse\mst_{r-1}\mse = \mse, \; \mse \mst_j=\mst_j \mse \quad (j \neq r-1),&&  \mse\mst_1^*\mse=\mse, \; \mse \mst_j^*=\mst_j^* \mse \quad (j \neq 1 ), \\
&\mse \mst_{r-1}^{-1}\mst_1^* \mse \mst_1^* \mst_{r-1}^{-1}  =\mst_{r-1}^{-1} \mst_1^* \mse \mst_1^* \mst_{r-1}^{-1}  \mse, &&  \\
&\msc_i^2=-1, \; \msc_i \msc_j = -\msc_j \msc_i \ \ (i\neq j), \; \msc_i\msc_j^*=-\msc_j^*\msc_i,  && (\msc_i^*)^2=1, \; \msc_i^* \msc_j^*=-\msc_j^*\msc_i^* \ \  (i\neq j), \\
&\mst_i\msc_i=\msc_{i+1}\mst_i, \; \mst_i\msc_j=\msc_{j}\mst_i \ \ (j \neq i, i+1),   &&\mst^*_i\msc^*_i=\msc^*_{i+1}\mst^*_i, \; \mst^*_i\msc^*_j=\msc^*_{j}\mst^*_i \ \  (j \neq i, i+1), \\
& \mst_i\msc_j^*=\msc_{j}^*\mst_i, \; \msc_r \mse=\msc_1^* \mse, \; \msc_j \mse=\mse \msc_j \quad ( j \neq r), &&  \mst^*_i \msc_j=\msc_{j}\mst^*_i, \; \mse \msc_r =\mse \msc_1^*, \; \msc_j^* \mse =\mse \msc_j^* \quad (j \neq 1),\\
& \mse \msc_r \mse =0. &&
\end{aligned}
\end{equation}
\end{definition}

\begin{definition} \rm{(i)} \   The \emph{(finite) Hecke-Clifford superalgebra} $\Heckr$ in \cite{Ol}  is the associative superalgebra over $\C(q)$
  generated by the  even  elements $\mst_1,\mst_2,\ldots, \mst_{r-1}$ and  the odd elements
   $\msc_1,\msc_2,\ldots, \msc_r$
with the following defining relations (for allowable $i,j$):
\begin{equation}
\begin{aligned}  \label{def:HC(q)}
\allowdisplaybreaks
&\mst_i^2-(q-q^{-1})\mst_i-1=0, \; \mst_i\mst_{i+1}\mst_i = \mst_{i+1} \mst_i \mst_{i+1}, \\
& \mst_i \mst_j = \mst_j\mst_i  \quad (|i-j|>1), \;\;  \\
&\msc_i^2=-1, \; \msc_i \msc_j = -\msc_j \msc_i \quad (i\neq j), \;  \\
&\mst_i\msc_i=\msc_{i+1}\mst_i, \; \mst_i\msc_j=\msc_{j}\mst_i \quad (j \neq i, i+1).  \\
\end{aligned}
\end{equation}

\rm{(ii)} The \emph{quantum walled Brauer algebra} $\mathsf{H}_{r,s}^0(q)$ in \cite{KM}  is
the associative algebra over $\C(q)$
  generated by the elements $\mst_1,\mst_2,\ldots, \mst_{r-1},  \mst_1^*, \ldots, \mst_{s-1}^*$ and $\mse$
 which satisfy the first four lines in \eqref{def:BC(q)}.
\end{definition}

\begin{remark} \label{rem:original relations in BC(q)}
The relations in the first three lines in \eqref{def:BC(q)} appear
in Definition 2.1 of \cite{KM}.    In line 4 of \eqref{def:BC(q)},  we have the one relation
\begin{align} \label{rel:new}
   \mse\mst_{r-1}^{-1}\mst_1^*\mse \mst_1^*\mst_{r-1}^{-1}
=\mst_{r-1}^{-1}\mst_1^*\mse \mst_1^*\mst_{r-1}^{-1} \mse.
 \end{align}
instead of the following two relations of \cite{KM}:
  \begin{align}\label{rel:original}
   \mse\mst_{r-1}^{-1}\mst_1^*\mse\mst_{r-1}=\mse\mst_{r-1}^{-1}\mst_1^*\mse\mst_1^*, \quad \quad
    \mst_{r-1}\mse\mst_{r-1}^{-1}\mst_1^*\mse=\mst_1^*\mse\mst_{r-1}^{-1}\mst_1^*\mse.
 \end{align}

The relations in  \eqref{rel:original} can be derived using \eqref{rel:new} and various identities from \eqref{def:BC(q)}
(especially the fact that $\mst_{r-1}$ and $\mst_1^*$ commute) in the following way:
 \begin{align*}
  \mse \mst_{r-1}^{-1} \mst_1^* \mse&= (\mse \mst_{r-1}^{-1}\mse) \  \mst_{r-1}^{-1} \mst_1^* \mse
                    = \mse ((\mst_1^*)^{-1}\mst_1^*)  \ \mst_{r-1}^{-1}  \mse \mst_{r-1}^{-1} \mst_1^*\mse \\
                   & =  \mse (\mst_1^*)^{-1} \ ( \mst_1^*  \ \mst_{r-1}^{-1}  \mse \mst_{r-1}^{-1} \mst_1^*\mse )
                    = \mse (\mst_1^*)^{-1} \ ( \mse \mst_1^*  \ \mst_{r-1}^{-1}  \mse \mst_{r-1}^{-1} \mst_1^* ) \\
                        &= (\mse (\mst_1^*)^{-1} \mse) \ \mst_1^* \mst_{r-1}^{-1} \mse \mst_{r-1}^{-1}  \mst_1^*
                        =  \mse \mst_1^* \mst_{r-1}^{-1} \mse \mst_{r-1}^{-1} \mst_1^*  = \mse\mst_{r-1}^{-1}\mst_1^*\mse \mst_1^*
                        \mst_{r-1}^{-1} = \mst_{r-1}^{-1}\mst_1^* \mse   \mst_{r-1}^{-1} \mst_1^*  \mse, \end{align*}
implying both relations in \eqref{rel:original}.
\end{remark}

The following simple expression will be useful in several calculations henceforth.
\begin{lemma}
With the conventions  $(-1)^{|0|}=0$, \  $|j| = 1$ for any integer  $j < 0$, \  and $|j| = 0$ for $j > 0$,
we have
\begin{equation}\label{sumq}
    \eps\sum_{i < j < k}(-1)^{|j|}q^{2j(1-2|j|)}=q^{(2k-1)(1-2|k|)}-q^{(2i+1)(1-2|i|)}
\end{equation}
for any nonzero integers $i < k$, where $\eps = q-q^{-1}$ as above.
\end{lemma}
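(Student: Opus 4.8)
The plan is to reduce the identity to a one-line telescoping sum, after first disposing of the $j=0$ slot. For a nonzero integer $j$ the summand on the left of \eqref{sumq} equals $q^{2j}$ when $j>0$ and $-q^{-2j}$ when $j<0$, while the $j=0$ term vanishes outright because of the convention $(-1)^{|0|}=0$; so the left-hand side is genuinely a sum of Laurent monomials in $q$ with no contribution from $j=0$.

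Next I would introduce the auxiliary function $\phi\colon\Z\to\C(q)$ by $\phi(j)=q^{(2j-1)(1-2|j|)}$ for $j\neq 0$ and $\phi(0)=q$, and prove the single identity
\[
(q-q^{-1})\,(-1)^{|j|}q^{2j(1-2|j|)}=\phi(j+1)-\phi(j)\qquad(j\in\Z).
\]
This is checked in the cases $j>0$, $j=0$, and $j<0$; in the last case the sub-case $j=-1$ is exactly where the artificially assigned value $\phi(0)=q$ is used (it matches $q^{1-2(-1+1)}$ as well as $q^{(2(-1)+1)(1-2|-1|)}$), and the case $j=0$ uses $\phi(1)=q=\phi(0)$ together with the vanishing just noted. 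Granting this identity, the left side of \eqref{sumq} becomes $\sum_{j=i+1}^{k-1}\bigl(\phi(j+1)-\phi(j)\bigr)=\phi(k)-\phi(i+1)$ by telescoping, and this also handles the empty-sum case $k=i+1$ automatically since then $\phi(k)-\phi(i+1)=0$.

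Finally I would identify the two boundary values with the claimed expressions. Since $k\neq 0$, we have $\phi(k)=q^{(2k-1)(1-2|k|)}$ by definition. For $\phi(i+1)$ one compares exponents using $2i+1=2(i+1)-1$ in the sub-cases $i>0$ (so $i+1>0$ and $\phi(i+1)=q^{2i+1}=q^{(2i+1)(1-2|i|)}$), $i=-1$ (so $i+1=0$ and $\phi(0)=q=q^{(-1)(-1)}=q^{(2i+1)(1-2|i|)}$), and $i\le-2$ (so $i+1<0$ and $\phi(i+1)=q^{1-2(i+1)}=q^{-2i-1}=q^{(2i+1)(1-2|i|)}$), each using $i\neq 0$. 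Substituting gives precisely the right-hand side of \eqref{sumq}. The only point needing any care — the "main obstacle," such as it is — is the bookkeeping around $j=0$ and $i=-1$, where the conventions $(-1)^{|0|}=0$ and the undefinedness of $|0|$ interact; everything else is a routine geometric/telescoping computation.
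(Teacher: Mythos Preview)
Your proof is correct. The paper's own proof is terse --- it simply says ``this can be checked by considering the three cases $0<i<k$, $i<k<0$ and $i<0<k$'' and leaves the geometric-series computation in each case to the reader. Your argument is organized differently: rather than splitting on the signs of the endpoints $i,k$, you set up a single telescoping identity $\eps(-1)^{|j|}q^{2j(1-2|j|)}=\phi(j+1)-\phi(j)$ valid for all integers $j$ (with the artificial value $\phi(0)=q$ patching across zero), and then push the case analysis entirely into verifying that one-step identity and identifying the boundary terms. The gain is uniformity --- one telescoping sum replaces three separate geometric-series computations --- at the cost of having to invent and track the auxiliary $\phi$. Both approaches ultimately hinge on the same bookkeeping around $j=0$ and $i=-1$ that you correctly flag as the only delicate point.
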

\begin{proof} This can be checked by considering the three cases $0 < i < k$, $i< k <0$ and $i < 0 < k$.
\end{proof}

In order to construct an action of $\BC$  on the mixed tensor space $\V_q^{r,s}$,
we will need a number of $\mfU_q(\mfq(n))$-module homomorphisms.
Note that $\C(q)$ becomes a $\mfU_q(\mfq(n))$-module by sending $U$ to the identity map
 in $\End_{\C(q)}(\C(q)\otimes_{\C(q)}\V_q)$.

\begin{lemma}
There are $\mfU_q(\mfq(n))$-module homomorphisms
$\cap:\C(q)\rightarrow \V_q\ot \V_q^*$ and $\cup:\V_q\ot \V_q^*\rightarrow\C(q)$ given by
\begin{equation*}
    \cap(1) = \sum_{i \in {\tt I}} v_i \ot \w_i,\quad  \cup(v_i\ot \w_j) = (-1)^{|i|}q^{2i(1-2|i|)-(2n+1)}\delta_{ij}.
\end{equation*}
\end{lemma}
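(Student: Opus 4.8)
The plan is to reduce the statement to a check on the algebra generators $\msu_{ij}$ of $\mfU_q(\mfq(n))$ and then to carry out that check from the explicit formula \eqref{Sfor2} for $S$. Since $\cap$ and $\cup$ are $\C(q)$-linear and $\mfU_q(\mfq(n))$ is generated by the $\msu_{ij}$ with $i\le j$, it is enough to verify the intertwining property on each such generator; and because $\C(q)$ carries the trivial action through the counit (so $\msu_{ij}$ acts on $\C(q)$ as $\delta_{ij}$), what must be shown is
$$\msu_{ij}\cdot\Bigl(\sum_{k\in{\tt I}}v_k\ot\w_k\Bigr)=\delta_{ij}\sum_{k\in{\tt I}}v_k\ot\w_k\quad\text{in }\V_q\ot\V_q^*$$
and, for all $k,l\in{\tt I}$,
$$\cup\bigl(\msu_{ij}\cdot(v_k\ot\w_l)\bigr)=\delta_{ij}(-1)^{|k|}q^{2k(1-2|k|)-(2n+1)}\delta_{kl}.$$

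First I would record the action of $\msu_{ij}$ on the two tensor factors: on $\V_q$ it acts as the endomorphism $S_{ij}$ read off from \eqref{Sfor2} (diagonal for $i=j$, equal to $\xi(-1)^{|i|}(E_{ji}+E_{-j,-i})$ for $i<j$, zero for $i>j$), while on $\V_q^*$ it acts through the antipode $U\mapsto U^{-1}$ and the pairing, via $(\msu_{ij}\cdot\w)(v)=(-1)^{(|i|+|j|)|\w|}\w\bigl(S(\msu_{ij})v\bigr)$, so a preliminary step is to write down $S^{-1}$ (it has the same shape as $S$ with $q\mapsto q^{-1}$ and the inequality reversed, up to signs) and thereby a closed formula for $\msu_{ij}\cdot\w_l$. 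Plugging both into $\Delta(\msu_{ij})=\sum_{k}(-1)^{(|i|+|k|)(|k|+|j|)}\msu_{ik}\ot\msu_{kj}$ turns each of the two identities above into a finite sum over ${\tt I}$, which I would evaluate by splitting on the signs of the indices and on coincidences among them, the $q$-power arithmetic being supplied by \eqref{sumq}. For $\cap$ the identity is essentially a repackaging of $S\,S^{-1}=\id$, once one accounts for the supertranspose relating the $\V_q^*$-action to $S^{-1}$; for $\cup$ the diagonal factor $(-1)^{|k|}q^{2k(1-2|k|)-(2n+1)}$ is precisely the scalar implementing $S^2$ on $\V_q$, which is what is needed to turn the left-dual pairing into a map out of $\V_q\ot\V_q^*$.

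The main difficulty is not conceptual but bookkeeping: keeping the $\Z_2$-signs (the $(-1)^{|i|}$ inside $S$, the Koszul sign $(-1)^{(|i|+|j|)|\w|}$ in the dual action, and the braiding signs in $\Delta$) consistent with the $q$-powers in the diagonal part of $S$, all the while handling the off-diagonal $\xi$-terms, which couple an index $i$ both to shifted indices and to the sign-flipped index $-i$. I expect the verification to divide cleanly into the case $i=j$, where only diagonal parts contribute and one checks that a product of $q$-powers collapses (to $1$ for $\cap$, to the prescribed scalar for $\cup$), and the case $i<j$, where the off-diagonal contribution of $S_{ij}$ on the first factor must cancel, summed over $k$, against the contribution of $\msu_{kj}$ on $\V_q^*$, by means of an identity of the shape \eqref{sumq}. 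The case $i<j$ is where I expect the real work to lie; once it and the diagonal case are settled for both $\cap$ and $\cup$, the lemma is proved.
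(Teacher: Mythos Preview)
Your plan is correct and would succeed; it is essentially the same computation the paper carries out, but organized differently. The paper works with the entire generating matrix $U$ at once: the action of $\mfU_q(\mfq(n))$ on $\V_q\ot\V_q^*$ is given by sending $U$ to $S^{13}\bigl((S^{23})^{-1}\bigr)^{{\tt T}_2}$, and the intertwining property becomes a single matrix identity rather than a family of identities indexed by $(i,j)$. For $\cap$ the paper avoids computation altogether by invoking the general fact $(X\ot\id)\cap=(\id\ot X^{\tt T})\cap$ for any $X$, which immediately yields $S^{13}\bigl((S^{23})^{-1}\bigr)^{{\tt T}_2}(\cap\ot\id)=(\cap\ot\id)$; this is exactly your ``repackaging of $SS^{-1}=\id$'' observation, made precise without ever splitting into diagonal and off-diagonal cases. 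For $\cup$ the paper does compute, writing out $S^{13}(S^*)^{23}$ explicitly (with $S^*=(S^{-1})^{{\tt T}_1}$) and then checking $(\cup\ot\id)S^{13}(S^*)^{23}=(\cup\ot\id)$ directly, with the cancellation supplied by \eqref{sumq}. Your generator-by-generator case split would reproduce the same calculation term by term; the matrix packaging just lets the paper handle all $\msu_{ij}$ simultaneously and keeps the sign bookkeeping concentrated in one place. Your interpretation of the $\cup$-scalar as ``implementing $S^2$'' is heuristically in the right direction (it is the pivotal/ribbon-type correction needed to pass from the canonical right evaluation to a left one), though the paper does not invoke this language and simply verifies the identity by brute force.
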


\begin{proof}
Since $\cap$ is the canonical map $\C(q)\rightarrow \V_q\ot \V_q^*$, we have
\begin{equation}\label{tcap}
    (X \ot \id)\cap=(\id \ot X^{\tt T})\cap
\end{equation}
for any $X \in\End_{\C(q)}(\V_q)$, where ${}^{\tt T}$ denotes the supertranspose. Since $\cap$ is even, it follows that
\[
    \left((S^{23})^{-1}\right)^{{\tt T}_2}(\cap\ot\id)=(S^{13})^{-1}(\cap\ot\id)
\]
in $\Hom_{\C(q)}(\C(q) \ot \V_q, \V_q\ot \V_q^*\ot \V_q)$, where ${}^{{\tt T}_2}$ indicates taking the supertranspose on the second factor. Thus
\[
    S^{13}\left((S^{23})^{-1}\right)^{{\tt T}_2}(\cap\ot\id)=(\cap\ot\id).
\]
The action of $\mfU_q(\mfq(n))$ on $\V_q\ot \V_q^*$  (resp. on $\C(q)$) is defined by
sending $U$ to $S^{13}\left((S^{23})^{-1}\right)^{{\tt T}_2}$ (resp.  to $\id$),
 so this shows that $\cap$ is a $\mfU_q(\mfq(n))$-module homomorphism.

To check that $\cup$ is a homomorphism, we require an explicit expression for $S^{13}\left((S^{23})^{-1}\right)^{{\tt T}_2}$. We have
\[
    S^{-1}=\sum_{i,j\in {\tt I}} q^{-(\delta_{ij} + \delta_{i,-j})(1-2|j|)} E_{ii} \ot E_{jj}
        -\eps \sum_{i,j\in {\tt I},\, {i < j}} (-1)^{|i|}  (E_{ji}+E_{-j,-i}) \ot E_{ij}.
\]
If we identify $\V_q$ with $\V_q^*$ via $v_i\mapsto \w_i$, then $(S^{-1})^{{\tt T}_1}$ becomes identified
with an endomorphism $S^*$ of $\V_q\ot \V_q$ given by
\begin{equation}\label{S*for}
    S^*=\sum_{i,j\in {\tt I}} q^{-(\delta_{ij} + \delta_{i,-j})(1-2|j|)} E_{ii} \ot E_{jj}
        -\eps \sum_{i,j\in {\tt I},\, i < j} (-1)^{|i||j|}  ((-1)^{|i|+|j|}E_{ij}+E_{-i,-j}) \ot E_{ij}.
\end{equation}
Therefore,  identifying $\V_q\ot \V_q^*$ with $\V_q\ot \V_q$, we have that the action on $\V_q\ot \V_q$ is defined by sending $U$ to
\begin{align*}
    S^{13}(S^*)^{23}
        = & \;\sum_{i,j,k \in {\tt I}} q^{(\delta_{ij} + \delta_{i,-j}-\delta_{jk} - \delta_{j,-k})(1-2|j|)}  E_{ii} \ot E_{kk} \ot E_{jj} \\
        &\;\hspace{-.2truecm} - \eps \sum_{i\in {\tt I}} \,\sum_{j,k \in {\tt I}, j< k} (-1)^{|k||j|}  q^{(\delta_{ij} + \delta_{-i,j})(1-2|j|)}  E_{ii} \ot \big((-1)^{|j|+|k|}E_{jk} + E_{-j,-k}\big) \ot E_{jk} \\
        & \;\hspace{-.2truecm}+ \eps \sum_{j,k \in {\tt I}, j < k}\, \sum_{i\in {\tt I}} (-1)^{|j|} q^{-(\delta_{ik}+\delta_{i,-k})(1-2|k|)}\big(E_{kj} + E_{-k,-j}\big) \ot E_{ii} \ot E_{jk}  \\
        &\;\hspace{-.2truecm}- \eps^2 \sum_{i,j,k \in {\tt I},\, j <  i < k}(-1)^{|i||j|+|j||k|+|j|}\big(E_{ij} + E_{-i,-j}\big) \ot \big((-1)^{|k|}E_{ik}+ (-1)^{|i|} E_{-i,-k}\big) \ot E_{jk}.
\end{align*}
 %Then
The map $\cup$ can be identified with the map $q^{-(2n+1)}\sum_{i\in {\tt I}} q^{2i(1-2|i|)}\w_i\ot \w_i$,
and $\w_kE_{ij} = \delta_{k,i}\w_j.$   Moreover, direct calculations show
%so
\begin{align*}
    &q^{2n+1}(\cup\ot\id)S^{13}(S^*)^{23}-q^{2n+1}(\cup\ot\id)\hspace{-55mm}\\
        &  \;\; =-\eps\sum_{j < k} (-1)^{|k||j|}q^{(1-2|j|)}q^{2j(1-2|j|)}\big((-1)^{|j|+|k|}\w_j\ot  \w_k+ \w_{-j}\ot \w_{-k}\big) \ot E_{jk} \\
        &  \;\;{}+\eps\sum_{j < k}(-1)^{|j|} q^{-(1-2|k|)}q^{2k(1-2|k|)}\big((-1)^{|k|(|k|+|j|)} \w_j\ot \w_k+(-1)^{(|k|+1)(|k|+|j|)} \w_{-j}\ot \w_{-k}\big)\ot E_{jk}  \\
        &  \;\;{}-\eps^2 \sum_{j< i <k}(-1)^{|i||j|+|j||k|+|j|}q^{2i(1-2|i|)}\big((-1)^{|k|+|i|(|i|+|j|)} \w_j\ot \w_k\\
        &  \;\;\hspace{10mm}{}+ (-1)^{|i|+(|i|+1)(|i|+|j|)} \w_{-j}\ot \w_{-k}\big) \ot E_{jk}\\
        &  \;\;=\eps\sum_{j < k}\Bigg(q^{(2k-1)(1-2|k|)}-q^{(2j+1)(1-2|j|)}-\eps\sum_{k>i>j}(-1)^{|i|}q^{2i(1-2|i|)}\Bigg)\\
        &  \;\;\hspace{10mm}\left((-1)^{|k||j|+|k|+|j|} \w_j\ot \w_k+(-1)^{|k||j|} \w_{-j}\ot \w_{-k}\right)\ot E_{jk}\\
        &  \;\;=0\text{ by (\ref{sumq})}.
\end{align*}
Therefore $(\cup\ot\id)S^{13}(S^*)^{23} =(\cup\ot\id)$, so $\cup$ defines a $\mfU_q(\mfq(n))$-module homomorphism.
\end{proof}

\begin{theorem}\label{BCqact}
There is an action of $\BC$ on $\V_q^{r,s}$
which supercommutes with the action of $\mfU_q(\mfq(n))$, such that the action of each generator is given by
\begin{eqnarray*}
    \mst_i&\mapsto&\id^{\otimes(i-1)}\otimes PS\otimes\id^{\otimes(r+s-1-i)},\qquad \qquad  \msc_i \, \mapsto \, \id^{\otimes(i-1)}\otimes \Iphi\otimes\id^{\otimes(r+s-i)},\\
    \mst_i^*&\mapsto&\id^{\otimes(r+i-1)}\otimes P^{\tt T}S^{\tt T}\otimes\id^{\otimes(s-1-i)},\quad \, \qquad   \msc_i^*\, \mapsto \, \id^{\otimes(r+i-1)}\otimes \Iphi^{\tt T}\otimes\id^{\otimes(s-i)},\\
    \mse& \mapsto &\id^{\otimes(r-1)}\otimes\cap\cup\otimes\id^{\otimes(s-1)},
\end{eqnarray*}
where $\id = \id_{\V_q}$,
\begin{equation} \label{eqn:Iphi}
    P = \sum_{i,j\in {\tt I}}(-1)^{|j|} E_{ij} \ot E_{ji}\in\End(\V_q\ot \V_q),\quad  \Iphi = \sum_{i\in {\tt I}} (-1)^{|i|} E_{i,-i} \in  \End(\V_q),
\end{equation}
and  ${}^{\tt T}$ is the supertranspose. Explicitly, identifying $\V_q$ with $\V_q^*$ as above, we have
\begin{eqnarray}
    PS&=&\sum_{i,j \in {\tt I}}(-1)^{|i|}q^{(\delta_{ij} + \delta_{-i,j})(1-2|j|)} E_{ji} \ot E_{ij} \notag \\
    && \qquad
        +\eps \sum_{{i,j \in \tt I}, i < j}  (E_{ii}\ot E_{jj}-  (-1)^{|i|+|j|}E_{i,-i}\ot E_{-j,j}), \label{eqn:PS} \\
    P^{\tt T}S^{\tt T}&=&\sum_{i,j \in {\tt I}}(-1)^{|i|}q^{(\delta_{ij} + \delta_{-i,j})(1-2|j|)} E_{ji} \ot E_{ij}
        +\eps \sum_{i,j \in {\tt I}, j < i}(E_{ii}\ot E_{jj}-E_{i,-i}\ot E_{-j,j}), \notag \\
    \Iphi^{\tt T}&=&\sum_{i \in \tt I} E_{i,-i},\; \; \cap\cup = q^{-(2n+1)}\sum_{i,j \in {\tt I}}(-1)^{|i||j|}q^{2j(1-2|j|)}E_{ij}\ot E_{ij}. \notag
\end{eqnarray}
\end{theorem}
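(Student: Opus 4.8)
The proof has two parts: that the displayed operators satisfy the defining relations \eqref{def:BC(q)}, so that they define an action of $\BC$ on $\V_q^{r,s}$; and that this action supercommutes with the $\mfU_q(\mfq(n))$-action. I would dispose of the second part first. The $\mfU_q(\mfq(n))$-module structure on $\V_q^{r,s}$ is built from the representation $\msu_{ij}\mapsto S_{ij}$ on $\V_q$, its antipode-dual on $\V_q^*$, and the iterated coproduct; so tensoring a $\mfU_q(\mfq(n))$-supermodule homomorphism with identity maps and composing again produces supermodule homomorphisms. Hence it suffices to check that each building block is a supermodule homomorphism on the relevant factor(s). For $\cap$ and $\cup$ this is the preceding lemma. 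For $PS$ it is the content of the RTT relation \eqref{rttf}, which says that $S$ --- equivalently $\check R:=PS$ --- intertwines the coproduct action on $\V_q\ot\V_q$; spelled out, $(PS)^{12}(S^{13}S^{23})=(S^{13}S^{23})(PS)^{12}$, which follows from the quantum Yang--Baxter equation $S^{12}S^{13}S^{23}=S^{23}S^{13}S^{12}$ and the standard intertwining of the flip $P$ with $S$. For $\Iphi$ a direct computation from \eqref{Sfor2} gives $\Iphi S_{ij}=(-1)^{|i|+|j|}S_{ij}\Iphi$ for all $i,j\in{\tt I}$ (treating the cases $i=j$ and $i<j$ separately), exhibiting $\Iphi$ as an odd supermodule endomorphism of $\V_q$. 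The transposed operators $P^{\tt T}S^{\tt T}$ and $\Iphi^{\tt T}$ on $\V_q^*$ are handled by the analogous computations (or by noting that the antipode action turns the supertranspose of a module map into a module map).

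Turning to the defining relations, those between generators supported on disjoint tensor factors hold automatically, the Koszul sign rule producing the anticommutativity of the odd generators, e.g. $\msc_i\msc_j=-\msc_j\msc_i$ $(i\ne j)$ and $\msc_i\msc_j^*=-\msc_j^*\msc_i$. The relations among the unstarred generators $\mst_1,\dots,\mst_{r-1},\msc_1,\dots,\msc_r$ are exactly the defining relations of the Hecke--Clifford superalgebra $\Heckr$ (see \eqref{def:HC(q)}), and on $\V_q^{\ot r}\subseteq\V_q^{r,s}$ these act precisely as in Olshanski's quantum Schur--Weyl duality \cite{Ol}, so they hold; concretely, the braid relation is the Yang--Baxter equation, the quadratic relation $\mst_i^2-(q-q^{-1})\mst_i-1=0$ is a short check from the explicit form \eqref{eqn:PS}, $\msc_i^2=-1$ is $\Iphi^2=-\id$, and $\mst_i\msc_i=\msc_{i+1}\mst_i$ is the commutation of $PS$ with $\Iphi\ot\id$ and $\id\ot\Iphi$, itself a reformulation of the supermodule property of $\Iphi$. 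The relations among the starred generators follow by the parallel argument on $(\V_q^*)^{\ot s}$ (or by applying the supertranspose anti-automorphism), the change from $\msc_i^2=-1$ to $(\msc_i^*)^2=1$ reflecting $(\Iphi^{\tt T})^2=\id$.

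For the relations involving $\mse$, observe first that $\cup\circ\cap\colon\C(q)\to\C(q)$ is multiplication by $q^{-(2n+1)}\sum_{i\in{\tt I}}(-1)^{|i|}q^{2i(1-2|i|)}=0$ (the terms with $i>0$ cancel those with $i<0$), so $\mse^2=\cap(\cup\cap)\cup=0$. The relations $\msc_j\mse=\mse\msc_j$ $(j\ne r)$, $\msc_j^*\mse=\mse\msc_j^*$ $(j\ne 1)$, $\mst_j\mse=\mse\mst_j$ $(j\ne r-1)$ and $\mst_j^*\mse=\mse\mst_j^*$ $(j\ne 1)$ are again disjoint-support relations. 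The relations $\msc_r\mse=\msc_1^*\mse$ and $\mse\msc_r=\mse\msc_1^*$ reduce, after cancelling $\cap$ or $\cup$, to $(\Iphi\ot\id)\cap=(\id\ot\Iphi^{\tt T})\cap$ and its dual, which is \eqref{tcap} with $X=\Iphi$; and $\mse\msc_r\mse=0$ reduces to $\cup\circ(\Iphi\ot\id)\circ\cap=0$, immediate since $\cup(\Iphi v_i\ot\w_i)$ is a scalar multiple of $\cup(v_{-i}\ot\w_i)=0$. What remain are the three ``walled Brauer'' relations $\mse\mst_{r-1}\mse=\mse$, $\mse\mst_1^*\mse=\mse$, and $\mse\mst_{r-1}^{-1}\mst_1^*\mse\mst_1^*\mst_{r-1}^{-1}=\mst_{r-1}^{-1}\mst_1^*\mse\mst_1^*\mst_{r-1}^{-1}\mse$.

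I expect these last three relations to be the main obstacle, as they are exactly the ones for which the normalization of $\cup$ (the factor $q^{-(2n+1)}$ and the weights $q^{2i(1-2|i|)}$) was designed. I would check them by direct computation with the explicit matrices in the statement: $\mse\mst_{r-1}\mse=\mse$ becomes the partial-trace identity $(\id\ot\cap\cup)(PS\ot\id)(\id\ot\cap\cup)=\id\ot\cap\cup$ on $\V_q\ot\V_q\ot\V_q^*$, whose summation over ${\tt I}$ collapses via \eqref{sumq}; $\mse\mst_1^*\mse=\mse$ is the mirror statement; and the fourth-line relation is an identity of operators on the four active factors $\V_q\ot\V_q\ot\V_q^*\ot\V_q^*$, to be verified entry by entry. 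As a consistency check, Remark \ref{rem:original relations in BC(q)} shows that, granted the other relations, the fourth-line relation is equivalent to the two relations \eqref{rel:original} of Kosuda--Murakami \cite{KM}, whose diagrammatic verification carries over to the present $S$, $\cap$, $\cup$ once the Koszul signs are inserted. The genuinely delicate points will be the bookkeeping of signs throughout the $\mse$-relations and the fourth-line computation; everything else is either formal (disjoint supports, or the supermodule property of a building block) or a short explicit matrix identity.
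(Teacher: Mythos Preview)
Your proposal is correct and follows essentially the same strategy as the paper: cite Olshanski \cite{Ol} for the $\Heckr$-relations and supercommutation on $\V_q^{\ot r}$, handle the starred side analogously, use the preceding lemma for $\cap,\cup$, and verify the $\mse$-relations directly via the explicit formulas and \eqref{sumq}. The paper's treatment of the starred generators differs slightly from yours --- rather than invoking a supertranspose anti-automorphism, it conjugates Olshanski's action on $\V_q^{\ot s}$ by the cyclic permutation $\sigma$ and then applies the antiautomorphism $\mst_i\mapsto\mst_{s-i}$, $\msc_i\mapsto\msc_{s+1-i}$ of $\Hecks$ --- but the effect is the same.

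The one place where the paper does something you should know about is the fourth-line relation. Instead of an entry-by-entry check on four tensor factors, the paper first proves two structural identities
\[
(PS\ot\id)(\id\ot\cap\ot\id)\cap=(\id\ot P^{\tt T}S^{\tt T})(\id\ot\cap\ot\id)\cap,\qquad
\cup(\id\ot\cup\ot\id)(PS\ot\id)=\cup(\id\ot\cup\ot\id)(\id\ot P^{\tt T}S^{\tt T}),
\]
the first of which follows formally from \eqref{tcap} applied to the canonical map $\C(q)\to\V_q^{\ot 2}\ot(\V_q^*)^{\ot 2}$ and the second by an explicit calculation. It then establishes the single key identity
\[
(\id\ot\cup\ot\id)(S^{-1}P\ot P^{\tt T}S^{\tt T})(\id\ot\cap\ot\id)=\cap\cup
\]
by a direct partial-trace computation using \eqref{sumq}. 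Combining these yields the two Kosuda--Murakami relations \eqref{rel:original} directly (not merely as a consistency check), and hence the fourth-line relation via Remark \ref{rem:original relations in BC(q)}. This reduces the hardest step to one explicit partial-trace calculation rather than a full four-factor matrix identity, and avoids having to adapt the diagrammatic argument from \cite{KM} to the present super setting.
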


\begin{proof}
The fact that the actions of $\mst_i$ and $\msc_i$ are $\mfU_q(\mfq(n))$-module endomorphisms satisfying the relations of $\Heckr$ is shown in \cite{Ol}. Consider the linear map given by the cyclic permutation $    \sigma:\V_q^{\otimes s} \rightarrow \V_q^{\otimes s}, \; v_1\ot\cdots\ot v_s \mapsto (-1)^{\sum_{i<j}|v_i||v_j|}v_s\ot\cdots\ot v_1$. Conjugating by $\sigma$, we obtain another action of $\Hecks$ on $\V_q^{\ot s}$ specified  by
\begin{equation*}
    \mst_i \mapsto \id^{\otimes(s-1-i)}\otimes SP\otimes\id^{\otimes(i-1)},\quad  \msc_i \mapsto \id^{\otimes(s-i)}\otimes {\Iphi} \otimes{\id}^{\otimes(i-1)}.
\end{equation*}
These maps are also $\mfU_q(\mfq(n))$-module endomorphisms (even though $\sigma$ is not). Applying the antiautomorphism of  $\Hecks$ that sends $\mst_i$ to $\mst_{s-i}$ and $\msc_i$ to $\msc_{s+1-i}$, we see that
\begin{equation*}
\mst_i^* \mapsto \id^{\otimes(i-1)}\otimes P^{\tt T}S^{\tt T}\otimes\id^{\otimes(s-1-i)},\quad    \msc_i^* \mapsto \id^{\otimes(i-1)}\otimes {\Iphi}^{\tt T}\otimes\id^{\otimes(s-i)}
\end{equation*}
satisfy the required relations.

Since $\cap$ and $\cup$ are $\mfU_q(\mfq(n))$-module homomorphisms, the same is true of $\mse$. We have
\begin{equation*}
    \cup(\id\ot \Iphi^{\tt T}) = q^{-(2n+1)}\sum_{i\in{\tt I}}q^{2i(1-2|i|)} \w_{-i}\ot \w_i = \cup(\Iphi \ot\id)
\end{equation*}
Thus,  $\cup(\id\ot \Iphi^{\tt T})=\cup(\Iphi\ot\id)$, so $\mse \msc_{r}=\mse \msc_1^*$. Also $(\id\ot \Iphi^{\tt T})\cap=(\Iphi\ot\id)\cap$ by (\ref{tcap}), so $\msc_{r}\mse=\msc_1^*\mse$. We have
\begin{eqnarray*}
    \cup\cap(1)
        &=&q^{-(2n+1)}\Bigg(\sum_{j\in{\tt I}}q^{2j(1-2|j|)} \w_j\ot \w_j\Bigg)\Bigg(\sum_{i\in{\tt I}}v_i\ot v_i\Bigg)\\
        &=&q^{-(2n+1)}\sum_{i\in{\tt I}}(-1)^{|i|}q^{2i(1-2|i|)} = 0 \\
    \cup({\Iphi}\ot\id)\cap(1)
        &=&q^{-(2n+1)}\Bigg(\sum_{i\in{\tt I}}q^{2i(1-2|i|)} \w_{-i}\ot \w_i\Bigg)\Bigg(\sum_{j\in{\tt I}}v_j\ot v_j\Bigg) = 0,
\end{eqnarray*}
so $\mse^2=\mse \msc_{r}\mse=0$. Now using $q^{2n+1}\cup(E_{ij}\ot\id)\cap(1)=(-1)^{|i|}q^{2i(1-2|i|)}\delta_{ij}$ and identifying $\V_q$ with $\V_q\ot\C(q)$, we have

\begin{eqnarray*}
    q^{2n+1}(\id\ot\cup)(PS\ot\id)(\id\ot\cap)
        &=&\sum_{j\in{\tt I}}q^{(2j+1)(1-2|j|)}E_{jj}+\eps\sum_{i,j \in{\tt I}, \, j < i}(-1)^{|i|}q^{2i(1-2|i|)}E_{jj}\\
        &=&\sum_{j\in{\tt I}}\left(q^{(2j+1)(1-2|j|)}+\eps\sum_{i,j \in {\tt I}, \, j < i}(-1)^{|i|}q^{2i(1-2|i|)}\right)E_{jj}\\
        &=&q^{2n+1}\id \ \ \text{ by (\ref{sumq})}.
\end{eqnarray*}
Thus $(\id\ot\cap\cup)(PS\ot\id)(\id\ot\cap\cup)=(\id\ot\cap\cup)$, so $\mse\mst_{r-1}\mse=\mse$. Similarly,
\begin{equation*}
    q^{2n+1}(\cup\ot\id)(\id\ot P^{\tt T}S^{\tt T})(\cap\ot\id)
         = \sum_jq^{(2j+1)(1-2|j|)}E_{jj}+\eps\sum_{i > j}(-1)^{|i|}q^{2i(1-2|i|)}E_{jj} =q^{2n+1}\id.
\end{equation*}
Thus $(\cap\cup\ot\id)(\id\ot P^{\tt T}S^{\tt T})(\cap\cup\ot\id)=(\cap\cup\ot\id)$, so $\mse \mst_1^*\mse=\mse$.

Identifying $\V_q\ot \V_q^*$ with $\V_q\ot\C(q)\ot \V_q^*$, we have
\begin{eqnarray*}
    (P\ot\id_{\V_q^*\ot \V_q^*})(\id_{\V_q}\ot\cap\ot\id_{\V_q^*})\cap(1)
        &=&(P\ot\id)\left(\sum_{i,j}v_j\ot v_i\ot \w_i\ot \w_j\right)\\
        &=&\sum_{i,j}(-1)^{|i||j|}v_i\ot v_j\ot \w_i\ot \w_j.
\end{eqnarray*}
The above is the canonical map $\C(q)\rightarrow \V_q\ot \V_q\ot \V_q^*\ot \V_q^*$, so by the same reasoning that led to (\ref{tcap}), we know
\[
    (SP\ot\id)(P\ot\id)(\id\ot\cap\ot\id)\cap=(\id\ot P^{\tt T}S^{\tt T})(P\ot\id)(\id\ot\cap\ot\id)\cap.
\]
Thus
\begin{equation}\label{Tee=T*ee}
    (PS\ot\id)(\id\ot\cap\ot\id)\cap=(\id\ot P^{\tt T}S^{\tt T})(\id\ot\cap\ot\id)\cap.
\end{equation}
To prove the corresponding expression for $\cup$, we must explicitly compute the following:
\begin{eqnarray*}
    q^{4n+2}\cup(\id\ot\cup\ot\id)(PS\ot\id)\hspace{-40mm}\\
        &=&q^{4n+2}\cup(\id\ot\cup\ot\id)\bigg(\sum_{i,j}(-1)^{|i|}q^{(\delta_{ij} + \delta_{-i,j})(1-2|j|)} E_{ji} \ot E_{ij}\ot\id\ot\id\\
        &&{}\qquad  +\eps\sum_{i > j}(E_{jj}\ot E_{ii}-(-1)^{|i|+|j|}E_{j,-j}\ot E_{-i,i})\ot\id\ot\id\bigg)\\
        &=&\sum_{i,j}(-1)^{|i||j|}q^{(\delta_{ij} + \delta_{-i,j}+2j)(1-2|j|)+2i(1-2|i|)}  \w_i\ot \w_j\ot \w_i\ot \w_j\\
        &&\hspace{-5mm} +\eps\sum_{i > j}q^{2i(1-2|i|)+2j(1-2|j|)}\left(\w_j\ot \w_i\ot \w_i\ot \w_j+(-1)^{|j|} \w_{-j}\ot \w_i\ot \w_{-i}\ot \w_j\right),
\end{eqnarray*}
\begin{eqnarray*}
    q^{4n+2}\cup(\id\ot\cup\ot\id)(\id\ot P^{\tt T}S^{\tt T})\hspace{-30mm}\\
        &=&q^{4n+2}\cup(\id\ot\cup\ot\id)\bigg(\sum_{i,j}(-1)^{|i|}q^{(\delta_{ij} + \delta_{-i,j})(1-2|j|)} \id\ot\id\ot E_{ji} \ot E_{ij}\\
        &&{}+\eps \sum_{i > j}\id\ot\id\ot\left(E_{ii}\ot E_{jj}-E_{i,-i}\ot E_{-j,j}\right)\bigg)\\
        &=&q^{2n+1}\cup\bigg(\sum_{i,j}(-1)^{|i|}q^{(\delta_{ij} + \delta_{-i,j}+2j)(1-2|j|)} \id\ot \w_j\ot \w_i\ot E_{ij}\\
        &&{}+\eps \sum_{i > j}q^{2i(1-2|i|)}\left(\id\ot \w_i\ot \w_i\ot E_{jj}-\id\ot \w_i\ot \w_{-i}\ot E_{-j,j}\right)\bigg)\\
        &=& q^{4n+2}\cup(\id\ot\cup\ot\id)(PS\ot\id).
\end{eqnarray*}
Thus,
\begin{equation}\label{eeT=eeT*}
    \cup(\id\ot\cup\ot\id)(PS\ot\id)=\cup(\id\ot\cup\ot\id)(\id\ot P^{\tt T}S^{\tt T}).
\end{equation}
Finally,  using
\[
    S^{-1}P=\sum_{i,j}(-1)^{|j|}q^{-(\delta_{ij}+\delta_{-i,j})(1-2|j|)}E_{ij}\ot E_{ji}-\eps\sum_{i>j} \left( E_{ii}\ot E_{jj}+(-1)^{|i|+|j|}E_{-i,i}\ot E_{j,-j}\right)
\]
and
\[
    q^{2n+1}\cup(E_{ij}\ot E_{kl})\cap=\delta_{ik}\delta_{jl}(-1)^{|i||j|+|i|+|j|}q^{2i(1-2|i|)},
\]
we have (with the help of (\ref{sumq}))
\begin{align*}
    q^{2n+1}(\id\ot\cup\ot\id)&(S^{-1}P\ot P^{\tt T}S^{\tt T})(\id\ot\cap\ot\id)\hspace{-10mm}\\
        &=\sum_{i,j} q^{2j(1-2|j|)}(-1)^{|i||j|}E_{ij}\ot E_{ij}\\
        &{}+\eps\sum_{i>j}(q^{(2i-1)(1-2|i|)} - q^{(2j+1)(1-2|j|)})\left(E_{ii}\ot E_{jj}+(-1)^{|i|}E_{-i,i}\ot E_{-j,j}\right)\\
        &{}-\eps^2\sum_{i>k>j}(-1)^{|k|}q^{2k(1-2|k|)}\left(E_{ii}\ot E_{jj}+(-1)^{|i|}E_{-i,i}\ot E_{-j,j}\right)\\
        &=q^{2n+1}\cap\cup.
\end{align*}
Combining this with (\ref{eeT=eeT*}) gives
\begin{eqnarray*}
    (\id\ot\cap\cup\ot\id)(S^{-1}P\ot P^{\tt T}S^{\tt T})(\id\ot\cap\cup\ot\id)(PS\ot\id)\hspace{-10mm}\\
        &&\hspace{-40mm =}(\id\ot\cap\ot\id)\cap\cup(\id\ot\cup\ot\id)(PS\ot\id)\\
        &&\hspace{-40mm =}(\id\ot\cap\cup\ot\id)(S^{-1}P\ot P^{\tt T}S^{\tt T})(\id\ot\cap\cup\ot\id)(\id\ot P^{\tt T}S^{\tt T}).
\end{eqnarray*}
Thus,  $\mse\mst_{r-1}^{-1}\mst_1^*\mse\mst_{r-1}=\mse\mst_{r-1}^{-1}\mst_1^*\mse\mst_1^*$. Similarly combining with (\ref{Tee=T*ee}) shows that
\begin{eqnarray*}
    (PS\ot\id)(\id\ot\cap\cup\ot\id)(S^{-1}P\ot P^{\tt T}S^{\tt T})(\id\ot\cap\cup\ot\id)\\
        &&\hspace{-40mm}=(PS\ot\id)(\id\ot\cap\ot\id)\cap\cup(\id\ot\cup\ot\id)\\
        &&\hspace{-40mm} =(\id\ot P^{\tt T}S^{\tt T})(\id\ot\cap\cup\ot\id)(S^{-1}P\ot P^{\tt T}S^{\tt T})(\id\ot\cap\cup\ot\id).
\end{eqnarray*}
Hence,  $\mst_{r-1}\mse\mst_{r-1}^{-1}\mst_1^*\mse=\mst_1^*\mse\mst_{r-1}^{-1}\mst_1^*\mse$,
and it follows that $\mse\mst_{r-1}^{-1}\mst_1^*\mse\mst_1^* \mst_{r-1}^{-1}= \mst_{r-1}^{-1}\mst_1^*\mse\mst_{r-1}^{-1}\mst_1^*\mse $.
\end{proof}

\begin{proposition}  \label{rem:classical limit of BC(q)}
The walled Brauer-Clifford superalgebra $\mathsf{BC}_{r,s}$ is the {\it classical limit} of the
quantum walled Brauer-Clifford superalgebra $\mathsf{BC}_{r,s}(q)$. \end{proposition}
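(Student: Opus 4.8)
The plan is to realize $\BC$ as the generic fibre of a flat family over $\mcR := \C[q,q^{-1}]$ whose special fibre at $q=1$ is $\mathsf{BC}_{r,s}$. Let $\BCloc$ be the associative $\mcR$-superalgebra defined by the generators and relations of Definition~\ref{def:BCq}, read over $\mcR$ (with $\xi = q-q^{-1}\in\mcR$). The quadratic relation $\mst_i^2-\xi\,\mst_i-1=0$ gives $\mst_i^{-1}=\mst_i-\xi$ inside $\BCloc$, and likewise for $\mst_i^*$, so the inverses occurring on line~4 of \eqref{def:BC(q)} are genuine elements of $\BCloc$ and the presentation is meaningful over $\mcR$; the $\Z_2$-grading is inherited in the obvious way. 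Since a presentation is stable under base change, $\BCloc\ot_{\mcR}\C(q)\cong\BC$ tautologically. It then remains to identify $\BCloc\ot_{\mcR}\C$, where $\C=\mcR/(q-1)$, with $\mathsf{BC}_{r,s}$, and to check that the family is flat, i.e. that $\BCloc$ is free over $\mcR$ of rank $2^{r+s}(r+s)!$.

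For the identification at $q=1$ I would match generators by $\mst_i\mapsto s_i$ ($1\le i\le r-1$), $\mst_j^*\mapsto s_{r+j}$ ($1\le j\le s-1$), $\mse\mapsto\er$, $\msc_i\mapsto c_i$ ($1\le i\le r$), $\msc_j^*\mapsto c_{r+j}$ ($1\le j\le s$), and then specialize each relation of \eqref{def:BC(q)}. Using $\xi\mapsto 0$ and $\mst_i^{-1}=\mst_i-\xi\mapsto s_i$, one checks line by line that every defining relation of $\BCloc$ becomes (up to sign, as an element of the free algebra) a defining relation of $\mathsf{BC}_{r,s}$ and, conversely, that each defining relation of $\mathsf{BC}_{r,s}$ arises in this way: the Hecke quadratic relations become $s_i^2=1$; the braid, commutation and Clifford relations are unchanged; $\mse^2=0$, $\mse\mst_{r-1}\mse=\mse$, $\mse\mst_1^*\mse=\mse$ become $\er^2=0$ and $\er s_{r-1}\er=\er s_{r+1}\er=\er$; $\mst_i\msc_i=\msc_{i+1}\mst_i$ becomes $s_ic_is_i=c_{i+1}$; and so on. The only relation meriting comment is the one on line~4: since $\mst_{r-1}$ and $\mst_1^*$ commute and $\mst_{r-1}^{-1}\mapsto s_{r-1}$, it specializes to $\er s_{r-1}s_{r+1}\er s_{r+1}s_{r-1}=s_{r-1}s_{r+1}\er s_{r+1}s_{r-1}\er$, which is exactly the relation $s_{r-1}s_{r+1}\er s_{r+1}s_{r-1}\er=\er s_{r-1}s_{r+1}\er s_{r+1}s_{r-1}$ of $\mathsf{BC}_{r,s}$. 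Hence the two presentations generate the same relation ideal, so $\BCloc\ot_{\mcR}\C\cong\mathsf{BC}_{r,s}$, which exhibits $\mathsf{BC}_{r,s}$ as the $q=1$ specialization of $\BC$.

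For flatness one must bound $\BCloc$ from above as an $\mcR$-module. Here I would adapt the spanning argument of \cite{JK} for $\mathsf{BC}_{r,s}$ to $\BCloc$: using only the relations of \eqref{def:BC(q)}, one rewrites any monomial in the generators as an $\mcR$-linear combination of the elements $\msc_P\,\mse_{p_1,q_1}\cdots\mse_{p_a,q_a}\,\boldsymbol{\sigma}\,\msc_Q$ indexed exactly as in Theorem~\ref{repmixtensor}(iii), where $\mse_{p,q}$ is the evident $q$-analogue of $e_{p,q}$. The $q$-corrections present in the Hecke quadratic relations and on line~4 only modify coefficients, which stay in $\mcR$, and do not alter the combinatorial normal form, so the rewriting terminates on the set $\mcB$ of these monomials; thus $\BCloc$ is spanned over $\mcR$ by $\mcB$, with $|\mcB|\le 2^{r+s}(r+s)!$. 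Reducing modulo $q-1$, the image $\ol{\mcB}$ spans $\BCloc\ot_{\mcR}\C\cong\mathsf{BC}_{r,s}$, which has dimension $2^{r+s}(r+s)!$ by Theorem~\ref{repmixtensor}(iv), so $|\mcB|=2^{r+s}(r+s)!$ and $\ol{\mcB}$ is a $\C$-basis. A standard specialization argument (Nakayama after localizing $\mcR$ at $(q-1)$, together with the $\mcR$-linear independence of the normal forms, equivalently the confluence of the rewriting over $\mcR$) then shows $\mcB$ is an $\mcR$-basis of $\BCloc$, so $\BCloc$ is free of rank $2^{r+s}(r+s)!$ and in particular $\dim_{\C(q)}\BC=2^{r+s}(r+s)!=\dim_{\C}\mathsf{BC}_{r,s}$. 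This completes the proof that $\mathsf{BC}_{r,s}$ is the classical limit of $\BC$. I expect this last paragraph to be the main obstacle: one must verify that every elementary rewriting move of \cite{JK} survives quantization and still drives an arbitrary word to the normal form $\mcB$ over $\mcR$ — in particular that reducing powers of the $\mst_i$ through the quadratic relation and sliding $\mse$ past braid generators via the line-4 relation never enlarge the normal-form index set, only perturb coefficients.
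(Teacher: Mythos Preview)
Your integral form $\BCloc$ differs from the paper's: you take the $\mcR$-algebra \emph{presented} by the relations of Definition~\ref{def:BCq}, whereas the paper takes the $\mcR$-\emph{subalgebra} of $\BC$ generated by the same elements (with $\mcR=\C[q,q^{-1}]_{(q-1)}$). With your choice, both $\BCloc\ot_\mcR\C(q)\cong\BC$ and $\BCloc\ot_\mcR\C\cong\mathsf{BC}_{r,s}$ are indeed tautologies of base change for presentations, so all the content of the proposition migrates to your flatness claim. With the paper's choice, $\BCloc$ is automatically torsion-free (it sits inside a $\C(q)$-vector space), and the work is to show that the evident surjection $\pi\colon\mathsf{BC}_{r,s}\twoheadrightarrow\BCloc/(q-1)\BCloc$ is injective.

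Your flatness argument has a real gap. From ``$\mcB$ spans $\BCloc$ over $\mcR$'' and ``$\ol{\mcB}$ is a $\C$-basis of $\mathsf{BC}_{r,s}$'' one cannot conclude that $\mcB$ is an $\mcR$-basis. Over the DVR $\mcR_{(q-1)}$ write $\BCloc\cong\mcR^a\oplus T$ with $T$ torsion; then $\dim_{\C(q)}\BC=a$ while $\dim_\C\mathsf{BC}_{r,s}=a+\dim_\C\bigl(T/(q-1)T\bigr)$, so semicontinuity goes the wrong way and Nakayama gives no lower bound on $\dim_{\C(q)}\BC$. Killing $T$ requires an independent inequality $\dim_{\C(q)}\BC\ge 2^{r+s}(r+s)!$. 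The ``confluence of the rewriting'' you invoke would supply this, but that is a full diamond-lemma verification you have not carried out, and it is not what the paper does.

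The paper supplies the missing lower bound via the representation on mixed tensor space: the action of Theorem~\ref{BCqact} is visibly $\mcR$-integral, giving $\rho_{n,\mcR}^{r,s}\colon\BCloc\to\End_{\mcR}(\V_\mcR^{r,s})$, and its reduction at $q=1$ is (up to the anti-involution fixing the generators) the classical action $\rho_n^{r,s}$ of $\mathsf{BC}_{r,s}$ on $\V^{r,s}$, which is faithful for $n\ge r+s$ by \cite{JK}. Since $\End_{\mcR}(\V_\mcR^{r,s})$ is free, this forces $\pi$ to be injective. Thus the key idea you are missing is precisely a faithful $\mcR$-integral module; the spanning/normal-form argument you sketch is used by the paper only \emph{after} Proposition~\ref{rem:classical limit of BC(q)}, in the proof of Theorem~\ref{basis}.
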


\begin{proof}
To see this, let $\mcR =\mathbb{C}[q,q^{-1}]_{(q-1)}$ be the localization of  $\mathbb{C}[q,q^{-1}]$ at the ideal generated by  $q-1$. Let $\BCloc$ be the $\mathcal{R}$-subalgebra of $\BC$ generated  by
$\mst_1, \ldots, \mst_{r-1}, \msc_1,\ldots, \msc_r, \mst_1^*, \ldots, \mst_{s-1}^*$, $\msc_1^*, \ldots, \msc_s^*, \mse$. Let $\V_{\mcR} = \mcR \ot_\C \V$ and set $\V_{\mcR}^{r,s} = \mcR \ot_{\C} \V^{r,s}$.

It follows from \cite[Thm.~5.1]{JK} that  there is a natural
epimorphism from the walled Brauer-Clifford superalgebra $\msB\msC_{r,s}$
onto $(\mcR/(q-1)\mcR) \otimes_{\mathcal{R}} \BCloc \cong
\BCloc/(q-1)\BCloc$ and hence a natural epimorphism $$\pi: \
\msB\msC_{r,s} \twoheadrightarrow \BCloc/(q-1)\BCloc.$$    We want
to argue that $\pi$ is an isomorphism.

Let $\rho_n^{r,s}: \ \msB\msC_{r,s} \lra \End_{\mathbb{C}}(\V^{r,s})^{\rm{op}}$ be
the representation given just before Theorem \ref{repmixtensor}. The action
of $\BC$ on $\V_q^{r,s}$ defined in Theorem \ref{BCqact} restricts
to a representation $\rho_{n,{\mcR}}^{r,s}: \BCloc  \lra
\End_{\mathcal{R}}(\V_{\mcR}^{r,s})$. Let $\ol{\rho}_{n,{\mcR}}^{r,s}$ be
the homomorphism
\[
    \BCloc/(q-1)\BCloc \lra \End_{\mcR}(\V_{\mcR}^{r,s})/(q-1)\End_{\mcR}(\V_{\mcR}^{r,s}).
\]
Since $\V_{\mcR}^{r,s}$ is a free $\mcR$-module, the algebra
$\End_{\mcR}(\V_{\mcR}^{r,s})$ is also free; thus, it is  possible
to identify
$\End_{\mcR}(\V_{\mcR}^{r,s})/(q-1)\End_{\mcR}(\V_{\mcR}^{r,s})$
with $\End_{\C}(\V^{r,s})$.

Let $\jmath$ be the anti-involution of $\msB\msC_{r,s}$ which fixes each generator.
Upon the previous identification, the composite $\ol{\rho}_{n,\mcR}^{r,s}
\circ \pi$ is equal to $\rho_n^{r,s} \circ \jmath$,  as can be checked from the action of
the generators on the mixed tensor space given in Theorem
\ref{BCqact}. (Setting $q=1$ in the formula  for $S$ in \eqref{Sfor2}
gives the identity map).  When $n\ge r+s$, the map $\rho_n^{r,s}$ is known
to be injective by \cite[Thm.~4.5]{JK}.  Therefore, if $n\ge
r+s$, then $\ol{\rho}_{n,{\mcR}}^{r,s} \circ \pi$ must also be injective,
hence so is $\pi$.  In conclusion, $\pi$ is an isomorphism.
\end{proof}

In the proof of Theorem 5.1 in \cite{JK}, a vector space basis of the walled Brauer-Clifford superalgebra
$\mathsf{BC}_{r,s}$ is constructed.
In this section, we obtain a basis of $\BC$ which specializes to the one in \cite{JK} when $q \mapsto 1$ (in a suitable sense).
 Our basis is inspired by the basis of the quantum walled Brauer algebra
 $\mathsf{H}_{r,s}^n(q)$ constructed in Section 2 of \cite{KM} (see Corollary \ref{subHC} below).

\begin{definition}{\rm \cite{KM}}\label{mondef}
A \emph{monomial $\msn$ in normal form in the generators
$\mst_1,\mst_2, \ldots, \mst_{r-1}$} is a product of the form $\msn =\msp_1 \msp_2 \cdots \msp_{r-1}$,
where $\msp_i = \mst_i^{-1} \mst_{i-1}^{-1} \cdots \mst_j^{-1}$ for some $j$ with $1\le j\le i+1$.
(If $j=i+1$, then $\msp_i=1$.) A \emph{monomial $\msn^*$ in normal form in the generators
$\mst_1^*,\mst_2^*, \ldots, \mst_{s-1}^*$} is a product of the form $\msn^* = \msp_1^* \msp_2^* \cdots \msp_{s-1}^*$, where $\msp_i^* =\mst_i^{*} \mst_{i-1}^{*} \cdots \mst_j^{*}$ for some $j$ with $1\le j\le i+1$. (If $j=i+1$, then $\msp_i^*=1$.)
\end{definition}

\begin{definition}  \label{BDef}
Suppose that $I = (i_1,\ldots, i_a)$ with $1\le i_1 < \cdots < i_a \le r$,
$J = (j_1, \ldots, j_a)$ with $0 \le j_k \le s-1$ for $k=1,\ldots, a$, and
if $k_1\neq k_2$, then $j_{k_1} \neq j_{k_2}$.
Let $\wt{I} \subseteq I$ and $\wt{J} \subseteq \{ 1,2,\ldots, r+s  \} \setminus J$.

A \emph{monomial $\msm$ in normal form in $\BC$} is one of the form
\begin{equation*}
\msm  = \msc_{\wt{I}}  \left( \prod_{k=1,\ldots,a}^{\longrightarrow}
\mst_{j_k}^*  \mst_{j_k-1}^* \cdots  \mst_1^*  \mst_{i_k}^{-1} \cdots  \mst_{r-2}^{-1}  \mst_{r-1}^{-1} \mse
\mst_{r-1}^{-1} \mst_{r-2}^{-1} \cdots \mst_{i_k}^{-1} \mst_1^* \cdots \mst_{j_k-1}^* \mst_{j_k}^* \right)
\msc_{\widetilde{J}} \msn \msn^*,
\end{equation*}
where
\begin{itemize}

\item[{\rm (1)}] $\msn$ is a monomial in normal form in the generators $\mst_1, \mst_2, \ldots,  \mst_{r-1}$;

\item[{\rm (2)}] $\msn^*$ is a monomial in normal form in the generators $ \mst_1^*, \mst_2^*, \ldots,  \mst_{s-1}^*$;

\item[{\rm (3)}] the product is arranged from $k = 1$ to $k = a$ from left to right;

\item[{\rm (4)}]$\msc_{\wt I}$ is the product of $\msc_i$ over $i\in\wt{I}$ in increasing order,
and $\msc_{\wt J}$ is defined similarly.

\item[{\rm (5)}] Moreover, it is required that if $\msn =  \msp_1 \msp_2 \cdots \msp_{r-1}$ and
$\msp_i =  \mst_i^{-1}  \mst_{i-1}^{-1} \cdots  \mst_j^{-1}$,
then $\widetilde{\sigma}^{-1}(i_1) < \cdots < \widetilde{\sigma}^{-1}(i_a)$
where $\widetilde{\sigma} = \sigma_1 \sigma_2 \cdots \sigma_{r-1}$
and $\sigma_i$ is the cycle $(i+1 \; i \; i-1 \; \cdots \; j+1 \; j)$.
\end{itemize}
\end{definition}

\begin{theorem}\label{basis}
The set $\mathcal{B}$ of monomials $\msm$ in normal form is a basis of $\BC$ over $\mathbb{C}(q)$.
\end{theorem}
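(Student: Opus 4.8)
The plan is to prove the theorem in two stages: first that $\mathcal{B}$ spans $\BC$ over $\C(q)$, and then, granting this, that $\mathcal{B}$ is linearly independent, by a counting argument that passes through the classical limit via Proposition~\ref{rem:classical limit of BC(q)}. Throughout I would work inside the $\mcR$-form $\BCloc \subseteq \BC$ introduced in the proof of Proposition~\ref{rem:classical limit of BC(q)}, where $\mcR = \C[q,q^{-1}]_{(q-1)}$ is a principal ideal domain (indeed a discrete valuation ring). Since $\mst_i^{-1} = \mst_i - (q-q^{-1})$ lies in $\BCloc$, every monomial $\msm$ of Definition~\ref{BDef} lies in $\BCloc$, so $\mathcal{B} \subseteq \BCloc$; moreover all structure constants appearing in the defining relations \eqref{def:BC(q)} lie in $\Z[q,q^{-1}] \subseteq \mcR$, so everything below can be carried out over $\mcR$.

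First I would prove the spanning statement in the strong form that the $\mcR$-submodule $M$ of $\BCloc$ generated by $\mathcal{B}$ is all of $\BCloc$. Since $1 \in \mathcal{B}$, it suffices to show that $M$ is stable under left multiplication by each generator, i.e.\ that every product $g\msm$ with $g \in \{\mst_i,\mst_j^*,\mse,\msc_k,\msc_l^*\}$ and $\msm \in \mathcal{B}$ can be rewritten, using only the relations \eqref{def:BC(q)}, as an $\mcR$-combination of elements of $\mathcal{B}$. The part of this rewriting that involves only $\mst_i,\mst_j^*,\mse$ is precisely the straightening algorithm for the quantum walled Brauer algebra $\mathsf{H}_{r,s}^0(q)$ established in \cite{KM}: our normal form restricts to theirs after erasing all Clifford factors, and condition~(5) of Definition~\ref{BDef} is the analogue of the condition on $\widetilde\sigma$ there. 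The genuinely new work is bookkeeping with the odd generators: when $g$ is a $\mst$, $\mst^*$ or $\mse$, one first slides it to the right past the Clifford blocks $\msc_{\widetilde I}$ and $\msc_{\widetilde J}$ using $\mst_i\msc_i=\msc_{i+1}\mst_i$, $\mst_i\msc_j=\msc_j\mst_i$ ($j\neq i,i+1$), $\msc_j\mse=\mse\msc_j$ ($j\neq r$), $\msc_r\mse=\msc_1^*\mse$, $\mse\msc_r=\mse\msc_1^*$ and their starred versions, which only relabel and permute the Clifford indices; when $g$ is a $\msc$ or $\msc^*$ one merges it into the relevant sorted block using anticommutativity and $\msc_i^2=-1$, $(\msc_i^*)^2=1$ to cancel any repeated index at the cost of a sign; and the relation $\mse\msc_r\mse=0$, together with $\msc_r\mse=\msc_1^*\mse$, forces any odd generator that would come to rest between two $\mse$'s of the same horizontal block to be eliminated — this is exactly why the normal form carries Clifford factors only at the front and the middle and why $\widetilde J$ avoids the indices of $J$. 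Working through the cases (organized, as in \cite{KM}, by first bringing $\msn\msn^*$ and then the $\mse$-block into normal form) yields $g\msm \in M$, hence $M = \BCloc$.

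For linear independence, a direct count shows $|\mathcal{B}| = 2^{r+s}(r+s)!$: the data $(I,J,\msn,\msn^*,\widetilde I,\widetilde J)$ are in obvious bijection with the data $(\sigma,\, e_{p_1,q_1}\cdots e_{p_a,q_a},\, P,\, Q)$ indexing the basis of $\mathsf{BC}_{r,s}$ in Theorem~\ref{repmixtensor}(iii), so $|\mathcal{B}| = \dim_\C \mathsf{BC}_{r,s} = 2^{r+s}(r+s)!$ by Theorem~\ref{repmixtensor}(iv). Now $\BCloc$ is a finitely generated $\mcR$-module, being spanned by $\mathcal{B}$, and it is torsion-free, being an $\mcR$-submodule of the $\C(q)$-vector space $\BC$; since $\mcR$ is a principal ideal domain, $\BCloc$ is a free $\mcR$-module. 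By Proposition~\ref{rem:classical limit of BC(q)}, $\BCloc/(q-1)\BCloc \cong \mathsf{BC}_{r,s}$, so $\mathrm{rank}_\mcR \BCloc = \dim_\C \mathsf{BC}_{r,s} = |\mathcal{B}|$. A spanning set of a free module over a commutative ring whose cardinality equals the rank is necessarily a basis; hence $\mathcal{B}$ is an $\mcR$-basis of $\BCloc$. Since $\BCloc \otimes_\mcR \C(q) \cong \BC$ (both sides are generated over $\C(q)$ by the listed generators, and the natural map is injective because $\BCloc$ is torsion-free), $\mathcal{B}$ is a $\C(q)$-basis of $\BC$.

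I expect the main obstacle to be the spanning step: carrying out the Clifford-enhanced version of the \cite{KM} straightening while keeping precise control of the permutations and signs induced on the odd generators, and checking in each case that $\mse\msc_r\mse = 0$ and its consequences reduce every product to the exact shape prescribed in Definition~\ref{BDef}. By contrast, the passage to the $\mcR$-form and the classical limit is routine once one notes that all relevant structure constants lie in $\mcR$ and that Proposition~\ref{rem:classical limit of BC(q)} together with Remark~\ref{rem:original relations in BC(q)} identifies the $q=1$ specialization of the presentation \eqref{def:BC(q)} with $\mathsf{BC}_{r,s}$.
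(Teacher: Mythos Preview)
Your proposal is correct and follows essentially the same strategy as the paper: spanning is established by a straightening argument over $\mcR$ (the paper invokes Step~1 of \cite[Thm.~5.1]{JK} and induction on word length, you invoke \cite{KM} plus Clifford bookkeeping---these are the same mechanism), and linear independence is deduced from the fact that $\BCloc$ is a finitely generated torsion-free, hence free, $\mcR$-module whose rank is computed via the classical limit of Proposition~\ref{rem:classical limit of BC(q)}. Your write-up is in fact more explicit than the paper's about the rank-counting step (the paper compresses this into a citation of \cite[Chap.~4, Thm.~5.11]{H}), but the underlying argument is identical.
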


\begin{proof}
As in \textbf{Step 1} of \cite[Thm.~5.1]{JK}, there
are relations analogous to those labeled (1)-(8), except that words
of strictly smaller length need to be added on one side of each
equality. By using induction on the length of words, we can verify
that the set $\mathcal {B}$ spans $\mathsf{BC}_{r,s}(q)$ over
$\C(q)$.

Since $\mathsf{BC}_{r,s}(\mathcal{R})$ is a finitely generated
torsion-free $\mathcal{R}$-module, it is free over $\mathcal{R}$.
Now by a standard argument in abstract algebra (cf. \cite[Chap.~4,
Thm.~5.11]{H}), it follows that $\mathcal{B}$ is
linearly independent over $\C(q)$.
\end{proof}

\begin{corollary} \label{cor:dim of BC}
The dimension of $\BC$ over $\C(q)$ is $(r+s)!\,2^{r+s}$.
\end{corollary}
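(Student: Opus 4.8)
The plan is to read off the dimension from Theorem \ref{basis} together with Proposition \ref{rem:classical limit of BC(q)}, avoiding a direct enumeration. By Theorem \ref{basis} the set $\mcB$ of monomials in normal form is a $\C(q)$-basis of $\BC$, so $\dim_{\C(q)}\BC = |\mcB|$; in particular $\BC$ is finite dimensional, and it remains only to identify this number with $(r+s)!\,2^{r+s}$.

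First I would pass to the integral form $\BCloc$. Every normal-form monomial of Definition \ref{BDef} already lies in $\BCloc$ (each $\mst_i^{-1}$ equals $\mst_i-(q-q^{-1})\in\BCloc$ by the quadratic relation), and the spanning argument from the proof of Theorem \ref{basis} goes through verbatim over $\mcR$; hence $\mcB$ spans $\BCloc$ as an $\mcR$-module. Therefore $\BCloc$ is a finitely generated torsion-free module over the discrete valuation ring $\mcR=\C[q,q^{-1}]_{(q-1)}$, so it is free, say of rank $N$. Localizing at the generic point gives $\C(q)\otimes_{\mcR}\BCloc\cong\BC$, so $N=\dim_{\C(q)}\BC$; reducing modulo $q-1$ gives $N=\dim_{\C}\bigl(\BCloc/(q-1)\BCloc\bigr)$ since the module is free.

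It now suffices to quote results already established: Proposition \ref{rem:classical limit of BC(q)} supplies an isomorphism $\mathsf{BC}_{r,s}\iso\BCloc/(q-1)\BCloc$, whence $N=\dim_{\C}\mathsf{BC}_{r,s}$, and Theorem \ref{repmixtensor}(iv) gives $\dim_{\C}\mathsf{BC}_{r,s}=2^{r+s}(r+s)!$. Chaining these equalities produces $\dim_{\C(q)}\BC=(r+s)!\,2^{r+s}$, as claimed.

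I expect no genuine obstacle here: once Theorems \ref{basis} and \ref{repmixtensor} and Proposition \ref{rem:classical limit of BC(q)} are in place, the argument is bookkeeping. If a self-contained count is preferred, one can instead enumerate $|\mcB|$ straight from Definition \ref{BDef}: the factor $\msn\msn^*$ together with the $\mse$-block ranges over the $(r+s)!$ normal-form monomials underlying the basis of the quantum walled Brauer algebra $\mathsf{H}_{r,s}^0(q)$ of \cite{KM}, while the independent choices of $\msc_{\wt I}$ and $\msc_{\wt J}$ contribute the Clifford factor $2^{r+s}$; the mild nuisance in this alternative is keeping condition (5) of Definition \ref{BDef} compatible with the index data, which the classical-limit route avoids.
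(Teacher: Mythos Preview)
Your argument is correct. The logic is sound: $\mcB$ spans $\BCloc$ over $\mcR$, so $\BCloc$ is a finitely generated torsion-free module over the discrete valuation ring $\mcR$ and hence free of some rank $N$; extending scalars to $\C(q)$ gives $N=\dim_{\C(q)}\BC$, while reducing modulo $(q-1)$ and invoking Proposition~\ref{rem:classical limit of BC(q)} and Theorem~\ref{repmixtensor}(iv) gives $N=\dim_{\C}\mathsf{BC}_{r,s}=(r+s)!\,2^{r+s}$.

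This is a genuinely different route from the paper's. The paper counts $|\mcB|$ directly: Theorem~\ref{basis} gives the basis, and the cardinality is read off combinatorially from the description in Definition~\ref{BDef} by citing Step~2 of \cite[Thm.~5.1]{JK} (for the Clifford and contraction indexing) and \cite[Lem.~1.7]{KM} (for the quantum walled Brauer part $\msn\msn^*$ together with the $\mse$-block). Your approach sidesteps that enumeration entirely by exploiting the classical-limit isomorphism already established in the paper, so the count reduces to the known dimension of $\mathsf{BC}_{r,s}$. The advantage of your route is that it stays internal to results proved in the paper and avoids tracking condition~(5) in Definition~\ref{BDef} through the combinatorics; the paper's route, on the other hand, yields the extra information that the normal-form monomials are in explicit bijection with the basis of $\mathsf{BC}_{r,s}$ from \cite{JK}, which is what you sketch as your alternative at the end.
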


\begin{proof}
This follows from Theorem \ref{basis}, \textbf{Step 2} in the proof
\cite[Thm.~5.1]{JK},  and  \cite[Lem.~1.7]{KM}.
\end{proof}

\begin{corollary}\label{subHC}
The subalgebra of $\BC$ generated by $\mst_1,\ldots, \mst_{r-1},\msc_1, \ldots, \msc_r$ (resp. by $\mst_1^*,\ldots, \mst_{s-1}^*$ and $\msc_1^*, \ldots, \msc_s^*$) is isomorphic to the finite Hecke-Clifford superalgebra $\Heckr$ (resp. to $\Hecks$).  The subalgebra generated by $\mst_1,\ldots, \mst_{r-1},\mst_1^*,\ldots, \mst_{s-1}^*$, and $\mse$ is isomorphic
to the quantum walled Brauer algebra $\mathsf{H}_{r,s}^0(q)$ in {\rm \cite{KM}}.
\end{corollary}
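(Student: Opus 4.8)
The plan is to exhibit each of the three subalgebras of $\BC$ as a homomorphic image of the corresponding ambient algebra, and then to show that these surjections are injective by comparing dimensions, using the normal-form basis $\mcB$ of Theorem~\ref{basis}.

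First I would construct the three homomorphisms. Comparing \eqref{def:BC(q)} with \eqref{def:HC(q)}, the generators $\mst_1,\dots,\mst_{r-1},\msc_1,\dots,\msc_r$ of $\BC$ satisfy precisely the defining relations of $\Heckr$, so there is a superalgebra homomorphism $\iota\colon\Heckr\to\BC$ whose image is the subalgebra $\msA:=\langle\mst_1,\dots,\mst_{r-1},\msc_1,\dots,\msc_r\rangle$. For the starred generators, \eqref{def:BC(q)} gives the same relations as those of $\Hecks$ except that $(\msc_i^*)^2=1$ rather than $\msc_i^2=-1$; since $\sqrt{-1}\in\C\subseteq\C(q)$, the rescaled assignment $\mst_i\mapsto\mst_i^*$, $\msc_i\mapsto\sqrt{-1}\,\msc_i^*$ defines a superalgebra homomorphism $\iota^*\colon\Hecks\to\BC$ onto $\msA^*:=\langle\mst_1^*,\dots,\mst_{s-1}^*,\msc_1^*,\dots,\msc_s^*\rangle$. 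Finally, by the presentation of $\mathsf{H}_{r,s}^0(q)$ adopted here (the first four lines of \eqref{def:BC(q)}, which by Remark~\ref{rem:original relations in BC(q)} coincide with the one of \cite{KM}), the generators $\mst_1,\dots,\mst_{r-1},\mst_1^*,\dots,\mst_{s-1}^*,\mse$ of $\BC$ satisfy the relations of $\mathsf{H}_{r,s}^0(q)$, giving a homomorphism $\kappa\colon\mathsf{H}_{r,s}^0(q)\to\BC$ onto $\msB:=\langle\mst_1,\dots,\mst_{r-1},\mst_1^*,\dots,\mst_{s-1}^*,\mse\rangle$. Each of $\iota,\iota^*,\kappa$ is surjective onto the stated subalgebra by construction.

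To obtain injectivity, recall that $\dim_{\C(q)}\Heckr=2^r r!$ and $\dim_{\C(q)}\Hecks=2^s s!$ (the Hecke part contributes $r!$, resp.\ $s!$, and the Clifford part $2^r$, resp.\ $2^s$; see \cite{Ol}), while $\dim_{\C(q)}\mathsf{H}_{r,s}^0(q)=(r+s)!$ by \cite{KM}. Since the three maps are surjective, it suffices to produce that many linearly independent elements in $\msA$, $\msA^*$, $\msB$, and these are supplied by suitable subsets of $\mcB$. Indeed, the normal-form monomials of Definition~\ref{BDef} with $a=0$, $\msn^*=1$ and $\widetilde J\subseteq\{1,\dots,r\}$ are exactly the elements $\msc_{\widetilde J}\,\msn$ with $\widetilde J\subseteq\{1,\dots,r\}$ and $\msn$ a monomial in normal form in $\mst_1,\dots,\mst_{r-1}$; they all lie in $\msA$ (note $\mst_i^{-1}=\mst_i-(q-q^{-1})\in\msA$ by the quadratic relation), they are linearly independent as a subset of $\mcB$, and by \cite[Lem.~1.7]{KM} there are $r!$ choices of $\msn$, hence $2^r r!$ of them. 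Thus $\dim_{\C(q)}\msA\ge 2^r r!=\dim_{\C(q)}\Heckr$, and $\iota$ is an isomorphism. Analogously, the normal-form monomials involving only $\mst_1^*,\dots,\mst_{s-1}^*,\msc_1^*,\dots,\msc_s^*$ number $2^s s!$, and those with $\widetilde I=\widetilde J=\emptyset$ (no Clifford letters) are precisely the normal-form basis of the quantum walled Brauer algebra of \cite{KM}, of cardinality $(r+s)!$ and contained in $\msB$; hence $\iota^*$ and $\kappa$ are isomorphisms as well.

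The essential inputs are Theorem~\ref{basis} and the known dimensions of the target algebras; granting these, the corollary is a bookkeeping exercise with $\mcB$. The two points requiring a little care are the scalar twist by $\sqrt{-1}$ in $\iota^*$, which is forced by the opposite conventions $(\msc_i^*)^2=1$ versus $\msc_i^2=-1$, and the identification of the Clifford-free normal-form monomials with the basis of $\mathsf{H}_{r,s}^0(q)$ --- this is exactly the factorization $(r+s)!\cdot 2^{r+s}$ of $\dim_{\C(q)}\BC$ underlying Corollary~\ref{cor:dim of BC}, obtained by combining \textbf{Step 2} of \cite[Thm.~5.1]{JK} with \cite[Lem.~1.7]{KM}.
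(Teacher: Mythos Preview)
Your proof is correct and follows essentially the same approach as the paper: construct surjections from the abstract algebras onto the subalgebras and verify injectivity by exhibiting enough linearly independent elements inside each subalgebra from the basis $\mcB$ of Theorem~\ref{basis}, comparing with the known dimensions $2^r r!$, $2^s s!$, and $(r+s)!$. Your treatment is in fact more careful than the paper's in one respect: you make explicit the $\sqrt{-1}$ twist needed for $\iota^*$ because $(\msc_i^*)^2=1$ whereas $\Hecks$ requires $\msc_i^2=-1$, a point the paper's proof leaves implicit.
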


\begin{proof}
  The first assertion follows from the fact that the set $\{\msc_{\tilde I} \msn \}$, as $\tilde I$ ranges over
  the subsets of $I=\{ 1, \cdots, r\}$ and $\msn$ ranges over the
  monomials in normal form in $\mst_1, \mst_2, \ldots,  \mst_{r-1}$,
   is a basis of $\mathsf{HC}_r(q)$ over $\C(q)$.
For the second assertion, one can show that the set $\mathcal{B'}$ consisting of the elements
$$ \left( \prod_{k=1,\ldots,a}^{\longrightarrow}
\mst_{j_k}^*  \mst_{j_k-1}^* \cdots  \mst_1^*  \mst_{i_k}^{-1} \cdots  \mst_{r-2}^{-1}  \mst_{r-1}^{-1} \mse
\mst_{r-1}^{-1} \mst_{r-2}^{-1} \cdots \mst_{i_k}^{-1} \mst_1^* \cdots \mst_{j_k-1}^* \mst_{j_k}^* \right) \msn \msn^*,$$
 where $\msn$ is a monomial in normal form in $\mst_1, \mst_2, \ldots,  \mst_{r-1}$
 satisfying the condition (5) in Definition \ref{BDef},
and $\msn^*$ is a monomial in  normal form in $ \mst_1^*, \mst_2^*, \ldots,  \mst_{s-1}^*$,
spans $\mathsf{H}_{r,s}^0(q)$ over $\C(q)$.
Since $\dim_{\C(q)} \mathsf{H}_{r,s}^0(q)=(r+s)!$ and $|\mathcal{B}'| \le (r+s)!$, the set $\mathcal{B}'$ is
a basis of $\mathsf{H}_{r,s}^0(q)$ over $\C(q)$.
\end{proof}

 We will frequently deduce properties of
$\mathsf{BC}_{r,s}(q)$ from the corresponding properties of
$\mathsf{BC}_{r,s}$ using the following well-known facts about
specialization, which we prove here for convenience.
\begin{lemma}\label{localring}
Suppose $\msR$ is a Noetherian local integral domain whose maximal ideal is generated by a single element $x\in \msR$. Let $\psi:\msA \rightarrow \msB$ be a homomorphism of finitely generated $\msR$-modules, and consider the corresponding induced homomorphism
\[
    \ol{\psi}: \msA /x\msA \rightarrow \msB/x\msB,  \qquad    \ol{\psi}(a + x\msA) = \psi(a) + x\msB.
\]
\begin{itemize}
\item[{\rm(i)}]  If $\ol{\psi}$   is surjective, then $\psi$ is surjective.
\item[{\rm(ii)}]  If $\msB$ is torsion free and $\ol{\psi}$ is injective, then $\psi$ is injective, and its cokernel is also torsion free.
\end{itemize}
\end{lemma}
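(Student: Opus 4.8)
The plan is to reduce (i) to Nakayama's lemma and (ii) to Krull's intersection theorem, after disposing of a degenerate case. If $x = 0$ then $\msR$ is a field, $\ol{\psi} = \psi$, and both claims are trivial, so assume $x \neq 0$; I would first record one structural fact used below. Since $\msR$ is Noetherian local with maximal ideal $(x)$, Krull's intersection theorem gives $\bigcap_{n\ge 0} x^n\msR = 0$, so every nonzero $r \in \msR$ has the form $r = ux^n$ with $u$ a unit (take $n$ maximal with $r \in x^n\msR$; otherwise $u \in (x)$, contradicting maximality), and in particular $\msR$ is a discrete valuation ring. For (i): surjectivity of $\ol{\psi}$ says $\psi(\msA) + x\msB = \msB$, so the finitely generated $\msR$-module $\msB/\psi(\msA)$ equals $x\cdot(\msB/\psi(\msA))$; since $(x)$ is the Jacobson radical of the local ring $\msR$, Nakayama's lemma forces $\msB/\psi(\msA) = 0$, i.e.\ $\psi$ is surjective.

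For the injectivity in (ii): given $a \in \Ker\psi$, its class in $\msA/x\msA$ is sent by $\ol{\psi}$ to $\overline{\psi(a)} = 0$, so by injectivity of $\ol{\psi}$ we get $a \in x\msA$, say $a = xa_1$; then $x\psi(a_1) = \psi(a) = 0$, and torsion-freeness of $\msB$ together with $x\neq 0$ gives $\psi(a_1) = 0$, so $a_1 \in \Ker\psi$. Iterating, $a$ lies in $\bigcap_{n\ge 0} x^n\msA$, which is $0$ by Krull's intersection theorem for the finitely generated $\msR$-module $\msA$, so $\psi$ is injective. For torsion-freeness of the cokernel $C := \msB/\psi(\msA)$, the structural fact above reduces us to killing $x$-torsion: if $xc = 0$ with $c$ the image of $b \in \msB$, then $xb = \psi(a)$ for some $a \in \msA$, and since $\ol{\psi}$ sends the class of $a$ to $\overline{xb} = 0$, injectivity of $\ol{\psi}$ gives $a = xa'$; hence $xb = x\psi(a')$, and torsion-freeness of $\msB$ yields $b = \psi(a') \in \psi(\msA)$, so $c = 0$.

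I do not expect a genuine obstacle here — this is standard specialization machinery. The one step warranting a little care is the passage from $x$-torsion to arbitrary torsion in the cokernel, which leans on the discrete-valuation-ring structure of $\msR$, itself another application of Krull's intersection theorem. Alternatively, that last step admits a homological proof: tensoring the exact sequence $0 \to \msA \xrightarrow{\psi} \msB \to C \to 0$ with $\msR/x\msR$ identifies the $x$-torsion of $C$ with $\operatorname{Tor}_1^{\msR}(C, \msR/x\msR)$, and $\operatorname{Tor}_1^{\msR}(\msB, \msR/x\msR) = 0$ since $\msB$ is torsion free, so the torsion submodule of $C$ vanishes once $\ol{\psi}$ is known to be injective.
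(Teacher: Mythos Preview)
Your proof is correct. Part (i) is essentially identical to the paper's. In part (ii) the approaches diverge: for injectivity, you iterate $a = xa_1 = x^2a_2 = \cdots$ and invoke Krull's intersection theorem on $\msA$, whereas the paper observes directly that the argument $a \in \Ker\psi \Rightarrow a = xa_1$ with $a_1 \in \Ker\psi$ shows $\Ker\psi = x\Ker\psi$, and then applies Nakayama's lemma once more (the kernel being finitely generated since $\msR$ is Noetherian). For the cokernel, you give an element-chase showing the $x$-torsion vanishes and then use the DVR structure of $\msR$ to handle arbitrary torsion; the paper instead lifts a basis $\mcX$ of $\msA/x\msA$ and an extension $\mcX \sqcup \mcY$ to a basis of $\msB/x\msB$, applies the already-proved parts to the resulting maps $\msR^{\mcX} \to \msA$ and $\msR^{\mcX}\oplus\msR^{\mcY} \to \msB$, and concludes that the cokernel is isomorphic to $\msR^{\mcY}$, hence free. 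The paper's route yields the slightly stronger conclusion that the cokernel is free, and avoids the side trip through the DVR structure; your approach is more self-contained at the element level, and your Tor alternative is a nice conceptual reformulation of the same step.
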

\begin{proof}  (i) \  Let $\msC$ be the cokernel of $\psi$. The right exact sequence $\msA \stackrel{\psi}{\rightarrow}\msB \twoheadrightarrow \msC$ induces a right exact sequence $\msA /x\msA \stackrel{\ol \psi}{\rightarrow}\msB/x\msB\twoheadrightarrow \msC/x\msC$. By assumption, $\ol \psi$ is surjective, so $\msC/x\msC=0$. Thus $\msC=0$ by Nakayama's lemma.

(ii) Let $\msK$ be the kernel of $\psi$. If $k\in \msK$,  then $\psi(k) + x\msA$
    is in the kernel of $\ol \psi$, which is zero by assumption. Thus $k\in x\msA$,
    so $k=xa$ for some $a\in \msA$. Thus $x\psi(a)=\psi(k)=0$, so $\psi(a)=0$
    since $\msB$ is torsion free.  Thus,  $a\in \msK$, so $\msK=x\msK$. Again by Nakayama's lemma, $\msK=0$, so $\psi$ is injective.
   Finally,  choose an $(\msR/x\msR)$-basis $\mcX$ of $\msA/x\msA$
and extend it to a basis $\mcX \sqcup \mcY$ of $\msB/x\msB$  (here
we are identifying $\ol{\psi}(\mcX)$ with $\mcX$ by injectivity). By
lifting these basis elements arbitrarily to $\msA$ and $\msB$, we
obtain a commutative diagram
    \[\xymatrix@C=20mm{
        \msR^\mcX\ar[r]\ar[d]&\msR^{\mcX}\oplus \msR^\mcY\ar[d]\\
        \msA\ar[r]&\msB\ar@{->>}[r]&\msB/\msA
    }\]
    where the top two modules are free over $\msR$, and the vertical
maps induce isomorphisms of $(\msR/x\msR)$-vector spaces. By what
we've shown so far, $\msR^\mcX\rightarrow \msA$ is surjective and
$\msR^\mcX\oplus \msR^\mcY\rightarrow \msB$ is an isomorphism.
Therefore $\msB/\msA\cong \msR^\mcY$ is free, and in particular,  is
torsion free.
\end{proof}

We now show that $\mathsf{BC}_{r,s}(q)$ gives  the centralizer  of the action of $\mfU_q(\mfq(n))$ on $\V_q^{r,s}$.
We deduce this from the
corresponding result in the classical case, which is proven in \cite{JK}.

\begin{theorem}\label{centraliser}
Let  $\rho_{n,q}^{r,s}:  \BC \rightarrow\End_{\mfU_q(\mfq(n))}(\V_q^{r,s})$ be the representation
of $\BC$ coming from Theorem \ref{BCqact}.  Then $\rho_{n,q}^{r,s}$  is surjective,  and when  $n\geq r+s$, it is an isomorphism.
\end{theorem}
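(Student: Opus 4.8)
The plan is to deduce the theorem from its classical counterpart, Theorem~\ref{repmixtensor}, by specializing at $q=1$. Work over $\mcR=\C[q,q^{-1}]_{(q-1)}$, a discrete valuation ring with uniformizer $q-1$; note that $q-q^{-1}=q^{-1}(q-1)(q+1)$ is also a uniformizer, since $q^{-1}$ and $q+1$ are units in $\mcR$. Write $B_q=\BC$ and $B_\mcR=\BCloc$; as recorded after Theorem~\ref{basis} and in Proposition~\ref{rem:classical limit of BC(q)}, $B_\mcR$ is free over $\mcR$, satisfies $B_\mcR\ot_\mcR\C(q)=B_q$, and $\pi\colon \mathsf{BC}_{r,s}\xrightarrow{\ \sim\ }B_\mcR/(q-1)B_\mcR$ is an isomorphism. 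Because the operators of Theorem~\ref{BCqact} have entries in $\C[q,q^{-1}]\subseteq\mcR$, the action restricts to $\rho_{n,\mcR}^{r,s}\colon B_\mcR\to\End_\mcR(\V_\mcR^{r,s})$ on the free module $\V_\mcR^{r,s}=\mcR\ot_\C\V^{r,s}$, and (exactly as in the proof of Proposition~\ref{rem:classical limit of BC(q)}) under the identification $\End_\mcR(\V_\mcR^{r,s})/(q-1)\cong\End_\C(\V^{r,s})$ the reduction $\ol{\rho_{n,\mcR}^{r,s}}$ equals $\rho_n^{r,s}\circ\jmath\circ\pi^{-1}$, where $\jmath$ is the anti-involution of $\mathsf{BC}_{r,s}$ fixing the generators.

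For injectivity when $n\ge r+s$ I would argue as follows. In this range $\rho_n^{r,s}$ is injective by Theorem~\ref{repmixtensor}(ii), and $\jmath,\pi$ are bijective, so $\ol{\rho_{n,\mcR}^{r,s}}$ is injective. Since $\End_\mcR(\V_\mcR^{r,s})$ is free, hence torsion free, Lemma~\ref{localring}(ii) shows $\rho_{n,\mcR}^{r,s}$ is injective, and tensoring with $\C(q)$ then gives injectivity of $\rho_{n,q}^{r,s}$.

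For surjectivity for all $n$, let $E_\mcR\subseteq\End_\mcR(\V_\mcR^{r,s})$ be the (finitely generated) $\mcR$-submodule of endomorphisms supercommuting with the $\mfU_q(\qn)$-action. Since $\End_\mcR(\V_\mcR^{r,s})/E_\mcR$ is torsion free (it embeds into a free module via the supercommutator map against a finite set of generators), $E_\mcR$ is a pure submodule, hence free, and $\ol{E_\mcR}:=E_\mcR/(q-1)E_\mcR$ embeds in $\End_\C(\V^{r,s})$; moreover, because taking centralizers commutes with the flat base change $\mcR\to\C(q)$, we have $E_\mcR\ot_\mcR\C(q)=\End_{\mfU_q(\qn)}(\V_q^{r,s})$. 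By Theorem~\ref{BCqact} the map $\rho_{n,\mcR}^{r,s}$ factors through $E_\mcR$, and by purity together with the identification above this yields $\End_{\qn}(\V^{r,s})^{\mathrm{op}}=\mathrm{im}(\rho_n^{r,s})=\ol{\rho_{n,\mcR}^{r,s}}(B_\mcR/(q-1))\subseteq\ol{E_\mcR}$. The key claim, which I take up in the final paragraph, is the reverse inclusion $\ol{E_\mcR}\subseteq\End_{\qn}(\V^{r,s})^{\mathrm{op}}$. Granting it, $\ol{E_\mcR}=\End_{\qn}(\V^{r,s})^{\mathrm{op}}$, so $\ol{\rho_{n,\mcR}^{r,s}}\colon B_\mcR/(q-1)\to E_\mcR/(q-1)$ is surjective; Lemma~\ref{localring}(i), applied to $\rho_{n,\mcR}^{r,s}\colon B_\mcR\to E_\mcR$, then shows this map is surjective, and tensoring with $\C(q)$ gives surjectivity of $\rho_{n,q}^{r,s}$. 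Together with the injectivity above, this proves the theorem.

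The step I expect to be the main obstacle is the key claim $\ol{E_\mcR}\subseteq\End_{\qn}(\V^{r,s})^{\mathrm{op}}$, i.e.\ that the $q\to1$ limit of the quantum centralizer lies inside the classical one. The subtlety is that $S$ specializes to the identity at $q=1$, so the raw action of $\mfU_q(\qn)$ becomes trivial and one must rescale. Because the entries of the action of $\msu_{ij}$ (for $i<j$) and of $\msu_{ii}-1$ are divisible by $q-q^{-1}$, the operators obtained by dividing these by $q-q^{-1}$ lie in $\End_\mcR(\V_\mcR^{r,s})$, and a direct computation with the coproduct and antipode of $\mfU_q(\qn)$ should show that their reductions mod $(q-1)$ are exactly the operators giving the action on $\V^{r,s}$ of the elements $(-1)^{|i|}(E_{ji}+E_{-j,-i})$ together with the diagonal generators; as $i<j$ range over ${\tt I}$ these span $\qn$, and the coproduct degenerates to the Leibniz rule \eqref{eq:action of qn on Vk}. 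Consequently any $f\in E_\mcR$, supercommuting with the rescaled operators, reduces to an endomorphism supercommuting with $\qn$, which is the claim. (Equivalently, one may invoke that Olshanski's $\mfU_q(\qn)$ admits an $\mcR$-form degenerating to $U(\qn)$ at $q=1$, compatibly with the action on $\V_\mcR^{r,s}$.) The only delicate points are checking the stated divisibility by $q-q^{-1}$ and matching the super-signs in the coproduct and antipode formulas with those in the classical action of Theorem~\ref{repmixtensor}.
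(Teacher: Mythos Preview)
Your argument is correct and follows essentially the same route as the paper: reduce to the classical result via the $\mcR$-form, use Lemma~\ref{localring} to lift surjectivity and injectivity from $q=1$, and resolve the key step by rescaling the generators $\msu_{ij}$ so that their specializations recover a spanning set of $\mfq(n)$ acting via the Leibniz rule. The paper carries out exactly the computation you sketch in your final paragraph (with $(q-1)^{-1}$ in place of your $(q-q^{-1})^{-1}$, which only changes the reductions by a factor of $2$), explicitly identifying the limits as the elements $u_{ij}\in\mfq(n)$ and verifying the coproduct degenerates as you predict; your purity argument for $E_\mcR$ is the same as the paper's observation that $(q-1)X$ supercommuting with the $\widetilde{\msu}_{ij}$ forces $X$ to as well.
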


\begin{proof}
Let $\pi:\mathsf{BC}_{r,s} \stackrel{\sim}{\rightarrow}(\mathcal{R}/(q-1)\mcR)  \otimes_{\mathcal{R}}\mathsf{BC}_{r,s}(\mathcal{R})$ be the isomorphism established in Proposition \ref{rem:classical limit of BC(q)}.
%established above before definition \ref{mondef}.
Consider
the following elements of $\mfU_q(\mfq(n))$  for $i,j\in
\mathtt{I}=\{ \pm i \, | \, i=1,\ldots, n \}$ with $i\leq j$:
\[
    \widetilde \msu_{ij}=(q-1)^{-1}\begin{cases}
        \msu_{ij}-1&\text{if }i=j,\\
        \msu_{ij}&\text{if }i\neq j.
    \end{cases}
\]
Let
\[
    \widetilde U=\sum_{i\leq j}\widetilde \msu_{ij}\ot E_{ij} \in\mfU_q(\mfq(n))\ot_{\C}\End_{\C}(\V).
\]
By (\ref{Sfor2}), the action of $\widetilde \msu_{ij}$ on $\V_q$ lies in $\End_{\mcR}(\V_{\mcR})\subseteq\End_{\C(q)}(\V_q)$. Moreover, under the surjection $\End_{\mcR}(\V_{\mcR})\twoheadrightarrow \End_{\C}(\V)$ given by evaluation at $q=1$, the action of $\widetilde \msu_{ij}$ maps to the action of the following element of $\mfq(n)$:
\[
    u_{ij}=\begin{cases}
       (-1)^{|i|} \msE_{ii}^0&\quad \text{if } \ i=j,\\
        (-1)^{|i|}2\msE_{(-1)^{|j|}j,(-1)^{|i|}i}^{1-\delta_{|i|,|j|}}&\quad\text{if }\ i<j.
    \end{cases}
\]
Similarly, by (\ref{S*for}), the action of $\widetilde \msu_{ij}$ on $\V_q^*$ lies in $\End_{\mcR}(\V_{\mcR}^*)$ and maps to the action of  $u_{ij}$ on $\V^*$ by evaluation at $q=1$.
%$\msu_{ij}$ on $\C(n|n)^*$.
Finally, since the coproduct on $\mfU_q(\mfq(n))$ sends
\[
    \Delta(\widetilde U)=\widetilde U^{13}+\widetilde U^{23}+(q-1)\widetilde U^{13}\widetilde U^{23},
\]
the corresponding statements extend to the action of $\widetilde \msu_{ij}$ on $\V_q^{r,s}$.

Now let $\End_{\widetilde U}(\V_{\mcR}^{r,s})$ denote the space of endomorphisms in $\End_{\mcR}(\V_{\mcR}^{r,s})$ which supercommute with the action of $\widetilde \msu_{ij}$ for all $i\leq j$. We will show that the $\mcR$-module homomorphism \[ \psi:\mathsf{BC}_{r,s}(\mcR) \lra \End_{\widetilde{U}}(\V_{\mcR}^{r,s}) \] is surjective, and an isomorphism if $n\ge r+s$.
Note that if $X\in\End_{\mcR}(\V_{\mcR}^{r,s})$ is such that $(q-1)X$ supercommutes with $\msu_{ij}$, then $X$ also supercommutes with $\msu_{ij}$. Therefore,  the induced homomorphism
\[
    (\mcR/(q-1)\mcR) \otimes_{\mathcal{R}}\End_{\widetilde U}(\V_{\mcR}^{r,s})\rightarrow\End_{\mathbb{C}}(\V^{r,s})
\]
is injective. Moreover since the elements $\{u_{ij}\mid i\leq j\}$ generate $\mfq(n)$, this map factors through $\End_{\mfq(n)}(\V^{r,s})$. We obtain the following diagram.
\[\xymatrix@C=20mm{
    \mathsf{BC}_{r,s} \ar[r]^-{\pi}\ar@{->>}[dr]&(\mcR/(q-1)\mcR) \ot_{\mathcal R}\BCloc \ar[r]^-{\id \ot \psi}&
        (\mcR/(q-1)\mcR)  \otimes_{\mcR}\End_{\widetilde U}(\V_{\mcR}^{r,s})\ar[dl]\ar@{^{(}->}[d]\\
    &\End_{\mfq(n)}(\V^{r,s})\ar@{^{(}->}[r]&\End_{\mathbb{C}}(\V^{r,s})
}\]
Now Theorem 3.5 of \cite{JK} shows that the homomorphism $\rho_n^{r,s}: \mathsf{BC}_{r,s}\rightarrow\End_{\mfq(n)}(\V^{r,s})^{\rm op}$ given by the $\mathsf{BC}_{r,s}$-module action is surjective, and also injective for $n\geq r+s$. It follows that $(\mcR/(q-1)\mcR) \otimes_{\mcR} \End_{\widetilde{U}}(\V_{\mcR}^{r,s}) \rightarrow \End_{\mfq(n)}(\V^{r,s})$ is an isomorphism for all $n$, so $(\mcR/(q-1)\mcR) \otimes_{\mcR} \psi$ is surjective for all $n$ and injective for $n\geq r+s$. Since $\End_{\widetilde U}(\V_{\mcR}^{r,s})$ is torsion free, we conclude by Lemma \ref{localring} that
\[
    \mathsf{BC}_{r,s}(\mcR)\rightarrow\End_{\widetilde U}(\V_{\mcR}^{r,s})
\]
is surjective for all $n$ and injective for $n\geq r+s$. Finally,  since the $\widetilde \msu_{ij}$ generate $\mfU_q(\mfq(n))$, we have
\[
    \C(q)\ot_{\mathcal R}\End_{\widetilde U}(\V^{r,s})=\End_{\mfU_q(\mfq(n))}(\V_q^{r,s}).
\]
Therefore,  tensoring by $\C(q)$, we obtain the desired result.
\end{proof} \smallskip

\begin{remark}\label{R:open}  The following question is left open: \  Does $\mfU_q(\mfq(n))$ surject onto $\End_{\BC}(\V_q^{r,s})$?
\end{remark}

%%%%%%%%%%%%%%%%%%%%%%%%%%%%%%%%%%%%%%%%%%%%%%%%%%%%%%%%%%%%%%%%%%%%%%%%%%%%%%%%%%%%
\vskip3mm
\section{The $(r,s)$-bead tangle algebras $\BT_{r,s}(q)$} \label{sec:bead tangle algebra}

In this section, we introduce a diagrammatic realization of the quantum walled Brauer-Clifford superalgebra $\BC$
 given in Definition \ref{def:BCq}.

\begin{definition}\label{def:tangle}
  An \emph{$(r,s)$-bead tangle} is a portion of a planar knot diagram
  %piece of knot diagram
  in a rectangle $R$ %in the plane
  with the following conditions: \begin{itemize}

\item[{\rm (1)}] The top and bottom boundaries of $R$ each have $r+s$ vertices in some standard position.

\item[{\rm (2)}] There is a vertical wall that separates  the first $r$ vertices  from the last $s$ vertices on the top and bottom boundaries.

\item[{\rm (3)}] Each vertex must be connected to exactly one other vertex by an arc.

\item[{\rm (4)}]  Each arc may (or may not) have finitely many numbered beads. The bead numbers in the tangle start with $1$ and are distinct consecutive positive integers.

\item[{\rm (5)}] A \emph{vertical arc} connects a vertex on the top boundary  to a vertex on the bottom boundary of $R$, and it cannot
       cross the wall.   A \emph{horizontal arc} connects two vertices on  the same boundary  of $R$, and it must cross the wall.

\item[{\rm (6)}] An $(r,s)$-bead tangle may have finitely many loops.  \end{itemize}
\end{definition}

The following is
an example of $(3,2)$-bead tangle.

$${\beginpicture
\setcoordinatesystem units <0.9cm,0.45cm>
\setplotarea x from 0 to 5.5, y from -1.5 to 4
%\put{$d = $} at 0 1.5
\put{$\bullet$} at  1 3  \put{$\bullet$} at  1 0
\put{$\bullet$} at  2 3  \put{$\bullet$} at  2 0
\put{$\bullet$} at  3 3  \put{$\bullet$} at  3 0
\put{$\bullet$} at  4 3  \put{$\bullet$} at  4 0
\put{$\bullet$} at  5 3  \put{$\bullet$} at  5 0

%rectangle
\plot 0.5 3 5.5 3 /
\plot 5.5 3 5.5 0 /
\plot 5.5 0 0.5 0 /
\plot 0.5 0 0.5 3 /

\plot 2.5 2.25  1 0 /
\plot 2.8 2.7 3 3 /

\plot 2.2 1.2 3 0 /
\plot 1.8 1.8 1 3 /

\plot 4 3  5 0 /

\put{\textcircled{1}} at 2.25 0.3
\put{\textcircled{2}} at 2.3 1.9
\put{\textcircled{3}} at 3.9 2

\setdashes  <.4mm,1mm>
\plot 3.5 -1   3.5 4 /
\setsolid
\setquadratic
%\plot 2 3  3.5 2 5 3 /

\plot 2 3   3.3 2 4.1 2.15 /
\plot 4.45 2.3  4.75 2.6 5 3 /

\plot 2.8 0.8  3.4 0.9 4 0 /
\plot 2 0  2.1 0.3   2.4 0.6 /

%\plot 2 0 3 1  4 0 /
\endpicture}$$

We want to stress that an $(r,s)$-bead tangle is in the plane,
not in $3$-dimensional space.
We consider a bead as a point  on the arc.
Two $(r,s)$-bead tangles are \emph{regularly isotopic}
if they are related by a finite sequence of the Reidemeister moves
II, III together with isotopies fixing the boundaries of $R$.

Reidemeister move II :  \hskip1em
 \xy
\vtwist~{(-5,10)}{(5,10)}{(-5,0)}{(5,0)};
\vcross~{(-5,0)}{(5,0)}{(-5,-10)}{(5,-10)};
\endxy  \hskip2em $<->$ \hskip2em \xy
(-5,10)*{}="T";
(5,10)*{}="B";
(-5,-10)*{}="T'";
(5,-10)*{}="B'";
"T";"T'" **\dir{-};
"B";"B'" **\dir{-};
\endxy \hskip2em $<->$ \hskip2em  \xy
\vcross~{(-5,10)}{(5,10)}{(-5,0)}{(5,0)};
\vtwist~{(-5,0)}{(5,0)}{(-5,-10)}{(5,-10)};
\endxy

\vskip1em

Reidemeister move III:
${\beginpicture
\setcoordinatesystem units <0.9cm,0.45cm>
\setplotarea x from 0 to 4, y from -1.5 to 4
%\put{$d = $} at 0 1.5

\plot 1 2 3 -1 /
\plot  1 -1 1.7 0 /
\plot 2.3 .95 3 2 /
\plot 2.5 2 2.5 1.75 /
\plot 2.5 .75 2.5 .25 /
\plot 2.5 -0.75 2.5 -1 /
\endpicture}$
${\beginpicture
\setcoordinatesystem units <0.9cm,0.45cm>
\setplotarea x from 0 to 4, y from -1.5 to 4
\put{$ < - >$} at -0.3 .5

\plot 1 2 3 -1 /

\plot  1 -1 1.7 0 /
\plot 2.3 .95 3 2 /

\plot 1.5 2 1.5 1.75 /
\plot 1.5 .75 1.5 .25 /
\plot 1.5 -0.75 1.5 -1 /
\endpicture}$

We observe that there are isotopies fixing the boundaries of the rectangles  between the following tangles:

${\beginpicture
\setcoordinatesystem units <0.9cm,0.45cm>
\setplotarea x from 0 to 4, y from -1.5 to 4
%\put{$ < - >$} at -0.3 1.5
\put{*} at 1 1.5
\put{*} at 3 1.5

\plot 2 3 2 0 /

\put{\textcircled{1}} at 2 0.5
\endpicture}$
${\beginpicture
\setcoordinatesystem units <0.9cm,0.45cm>
\setplotarea x from 0 to 4, y from -1.5 to 4
\put{$ < - > $} at -0.3 1.5
\put{*} at 1 1.5
\put{*} at 3 1.5

\plot 2 3 2 0 /

\put{\textcircled{1}} at 2 1.5
\endpicture}$
${\beginpicture
\setcoordinatesystem units <0.9cm,0.45cm>
\setplotarea x from 0 to 4, y from -1.5 to 4
\put{$ < - > $} at -0.3 1.5

\put{*} at 1 1.5
\put{*} at 3 1.5
\put{,} at 3.5 1.5

\plot 2 3 2 0 /

\put{\textcircled{1}} at 2 2.5
\endpicture}$

\hskip-5em${\beginpicture
\setcoordinatesystem units <0.9cm,0.45cm>
\setplotarea x from -1.5 to 5, y from -1.5 to 4
%\put{$ < - >$} at -0.3 1.5
\put{*} at 0.5 1.5
\put{*} at 3.5 1.5

\plot 1 3 3 0 /
\plot 1 0 1.7 1 /
\plot 2.3 1.95  3 3 /

\put{\textcircled{1}} at 2.7 0.5
\endpicture}$
${\beginpicture
\setcoordinatesystem units <0.9cm,0.45cm>
\setplotarea x from -1.5 to 5, y from -1.5 to 4
\put{$ < - >$} at -1 1.5
\put{*} at 0.5 1.5
\put{*} at 3.5 1.5

\plot 1 3 3 0 /
\plot 1 0 1.7 1 /
\plot 2.3 1.95  3 3 /

\put{\textcircled{1}} at 2 1.5
\endpicture}$
${\beginpicture
\setcoordinatesystem units <0.9cm,0.45cm>
\setplotarea x from -1.5 to 5, y from -1.5 to 4
\put{$ < - >$} at -1 1.5

\put{*} at 0.5 1.5
\put{*} at 3.5 1.5
\put{.} at 4 1.5

\plot 1 3 3 0 /
\plot 1 0 1.7 1 /
\plot 2.3 1.95  3 3 /

\put{\textcircled{1}} at 1 2.8
\endpicture}$

Therefore, moving a bead along a non-crossing arc or an over-crossing arc  gives tangles that are regularly isotopic.
We want to emphasize that the following are \emph{not} regularly isotopic:

\begin{center}
\hskip-2.5em
${\beginpicture
\setcoordinatesystem units <0.9cm,0.45cm>
\setplotarea x from -0.5 to 4, y from -1.5 to 4
%\put{$ < - >$} at -1 1.5

\put{*} at 0.5 1.5
\put{*} at 3.5 1.5
%\put{.} at 4 1.5

\plot 1 3 3 0 /
\plot 1 0 1.7 1 /
\plot 2.3 1.95  3 3 /

\put{\textcircled{1}} at 1.2 0.3
\endpicture}$
${\beginpicture
\setcoordinatesystem units <0.9cm,0.45cm>
\setplotarea x from -0.5 to 4, y from -1.5 to 4
\put{and} at -0.5 1.5

\put{*} at 0.5 1.5
\put{* \quad , } at 3.5 1.5
%\put{.} at 4 1.5

\plot 1 3 3 0 /
\plot 1 0 1.7 1 /
\plot 2.3 1.95  3 3 /

\put{\textcircled{1}} at 2.8 2.7
\endpicture}$
${\beginpicture
\setcoordinatesystem units <0.9cm,0.45cm>
\setplotarea x from -0.5 to 4, y from -1.5 to 4
%\put{$ < - >$} at -1 1.5

\put{*} at 0.5 1.5
\put{*} at 3.5 1.5
%\put{.} at 4 1.5

\plot 1 0 3 3 /
\plot 1 3 1.7 1.95 /
\plot 2.3 1.05  3 0 /

\put{\textcircled{1}} at 2.7 0.4
\endpicture}$
${\beginpicture
\setcoordinatesystem units <0.9cm,0.45cm>
\setplotarea x from -0.5 to 4, y from -1.5 to 4
\put{and} at -0.5 1.5

\put{*} at 0.5 1.5
\put{* \quad .} at 3.5 1.5
%\put{.} at 4 1.5

\plot 1 0 3 3 /
\plot 1 3 1.7 1.95 /
\plot 2.3 1.05  3 0 /

\put{\textcircled{1}} at 1.3 2.45
\endpicture}$
\end{center}

We identify an $(r,s)$-bead tangle with its regular isotopy class, and
denote by $\widetilde{\BT}_{r,s}$ the set of $(r,s)$-bead tangles (up to regular isotopy).
The $(r,s)$-bead tangle in which there are even (resp. odd) number of
beads is regarded as \emph{even} (resp. \emph{odd}).

Now we define a multiplication on $\wt{\BT}_{r,s}$.
For $(r,s)$-bead tangles $d_1,d_2$,
we place $d_1$ under $d_2$ and identify the top row of $d_1$ with the bottom row of $d_2$.
We add the largest bead number in $d_1$ to each bead number in $d_2$, as we did for $(r,s)$-bead
diagrams, and then concatenate the tangles.
For example, if

\begin{center}
${\beginpicture
\setcoordinatesystem units <0.9cm,0.45cm>
\setplotarea x from 0 to 4.5, y from -1.5 to 4
\put{$d_1=$} at -0.1 1.5
\put{$\bullet$} at  1 3  \put{$\bullet$} at  1 0
\put{$\bullet$} at  2 3  \put{$\bullet$} at  2 0
\put{$\bullet$} at  3 3  \put{$\bullet$} at  3 0
\put{,} at 3.75 1.5

\plot 0.5 3 3.5 3 /
\plot 3.5 3 3.5 0 /
\plot 3.5 0 0.5 0 /
\plot 0.5 0 0.5 3 /

\plot 1 3  2 0 /

\put{\textcircled{1}} at 1.85 0.5
\put{\textcircled{2}} at 1.3 0.5
\put{\textcircled{3}} at 2.2 2.4

\setdashes  <.4mm,1mm>
\plot 2.5 -1   2.5 4 /
\setsolid

\setquadratic
\plot 2 3  2.5 2 3 3 /

%\plot 1 0 2 1.2  3 0 /

\plot 1 0  1.2 0.5  1.55 0.9 /
\plot 1.9 1.15 2.5 1 3 0 /
\endpicture}$
${\beginpicture
\setcoordinatesystem units <0.9cm,0.45cm>
\setplotarea x from 0 to 4.5, y from -1.5 to 4
\put{$d_2=$} at -0.1 1.5
\put{$\bullet$} at  1 3  \put{$\bullet$} at  1 0
\put{$\bullet$} at  2 3  \put{$\bullet$} at  2 0
\put{$\bullet$} at  3 3  \put{$\bullet$} at  3 0
\put{,} at 3.75 1.5

\plot 0.5 3 3.5 3 /
\plot 3.5 3 3.5 0 /
\plot 3.5 0 0.5 0 /
\plot 0.5 0 0.5 3 /

\plot 1 0 1 3 /

%\plot 1 3  2 0 /

\put{\textcircled{1}} at 2.15 0.5
\put{\textcircled{2}} at 1 2.4

\setdashes  <.4mm,1mm>
\plot 2.5 -1   2.5 4 /
\setsolid

\setquadratic
\plot 2 3  2.5 2 3 3 /
\plot 2 0 2.5 1  3 0 /
\endpicture}$
\end{center}

then,
\begin{center}

${\beginpicture
\setcoordinatesystem units <0.9cm,0.45cm>
\setplotarea x from 0 to 4.5, y from -1.5 to 4
\put{$d_1 d_2=$} at -0.3 3.5
\put{$\bullet$} at  1 3  \put{$\bullet$} at  1 0
\put{$\bullet$} at  2 3  \put{$\bullet$} at  2 0
\put{$\bullet$} at  3 3  \put{$\bullet$} at  3 0
%\put{,} at 3.75 1.5

\plot 0.5 3 3.5 3 /
\plot 3.5 3 3.5 0 /
\plot 3.5 0 0.5 0 /
\plot 0.5 0 0.5 3 /

\plot 1 3  2 0 /

\put{\textcircled{1}} at 1.85 0.5
\put{\textcircled{2}} at 1.3 0.5
\put{\textcircled{3}} at 2.2 2.4

%\arrow <3 pt> [1,2] from 1 0 to  2.4 2.1
%\arrow <3 pt> [1,2] from 3.05 2.05 to 3 2.05
%\arrow <3 pt> [1,2] from 3.05 1 to 3 1

\setdashes  <.4mm,1mm>
\plot 2.5 -1   2.5 4 /
\setsolid

\setdashes  <.4mm,0.3mm>
\plot 1 3   1 4 /
\plot 2 3   2 4 /
\plot 3 3   3 4 /
\setsolid

%top

\put{$\bullet$} at  1 7  \put{$\bullet$} at  1 4
\put{$\bullet$} at  2 7  \put{$\bullet$} at  2 4
\put{$\bullet$} at  3 7  \put{$\bullet$} at  3 4
%\put{,} at 3.75 1.5

\plot 0.5 7 3.5 7 /
\plot 3.5 7 3.5 4 /
\plot 3.5 4 0.5 4 /
\plot 0.5 4 0.5 7 /

\plot 1 4 1 7 /

%\plot 1 3  2 0 /

\put{\textcircled{4}} at 2.15 4.5
\put{\textcircled{5}} at 1 6.4
%\put{\textcircled{3}} at 2.2 2.4

%\arrow <3 pt> [1,2] from 1 0 to  2.4 2.1
%\arrow <3 pt> [1,2] from 3.05 2.05 to 3 2.05
%\arrow <3 pt> [1,2] from 3.05 1 to 3 1

\setdashes  <.4mm,1mm>
\plot 2.5 3   2.5 8 /
\setsolid

%d_1d_2
\put{$=$} at 4.5 3.5
\put{$\bullet$} at  6 5  \put{$\bullet$} at  6 2
\put{$\bullet$} at  7 5  \put{$\bullet$} at  7 2
\put{$\bullet$} at  8 5  \put{$\bullet$} at  8 2
%\put{,} at 3.75 1.5

\plot 5.5 5 8.5 5 /
\plot 8.5 5 8.5 2 /
\plot 8.5 2 5.5 2 /
\plot 5.5 2 5.5 5 /

\plot 6 5  7 2 /

\put{\textcircled{1}} at 6.85 2.5
\put{\textcircled{2}} at 6.3 2.5
\put{\textcircled{3}} at 7.2 3.5
\put{\textcircled{4}} at 7.4 4.15
\put{\textcircled{5}} at 6.2 4.4

\setdashes  <.4mm,1mm>
\plot 7.5 1   7.5 6 /
\setsolid

\setquadratic
\plot 2 7  2.5 6 3 7 /
\plot 2 4 2.5 5  3 4 /

\setquadratic
\plot 2 3  2.5 2 3 3 /

\plot 1 0  1.2 0.5  1.55 0.9 /
\plot 1.9 1.15 2.5 1 3 0 /

\setquadratic
\plot 7 5  7.5 4.7 8 5 /

\plot 6 2  6.2 2.5  6.55 2.9 /
\plot 6.9 3.1 7.5 2.8 8 2 /

\plot 7 3.8 7.5 3.3 8 3.8 /
\plot 7 3.8 7.5 4.3 8 3.8 /
\endpicture}$
\end{center}

We observe $\wt{\BT}_{r,s}$ is closed under this product,  and
it is $\Z_2$-graded. The shape of the arcs are the
same in $(d_1d_2)d_3$ and $d_1(d_2d_3)$. Moreover, the locations of the
beads and the bead numbers are also the same in $(d_1d_2)d_3$ and
$d_1(d_2d_3)$. It follows that the multiplication on $\wt{\BT}_{r,s}$
is associative. Hence $\wt{\BT}_{r,s}$ is a monoid with  identity element
\[
\xy
%(-10,-5)*{\text{id}=};
(0,0)*{\bullet};
(0,-10)*{\bullet};
(0,0.5)*{};(0,-10)*{\bullet}**\dir{-};
(20,0)*{\bullet}; (20,-10)*{\bullet};
(20,0.5)*{}; (20,-10)*{\bullet}**\dir{-};
(30,0)*{\bullet}; (30,-10)*{\bullet};
(30,0.5)*{}; (30,-10)*{\bullet}**\dir{-};
(50,0)*{\bullet}; (50,-10)*{\bullet};
(50,0.5)*{}; (50,-10)*{\bullet}**\dir{-};
(25,3)*{}; (25,-13)*{}**\dir{.};
(10,-5)*{\cdots};(40,-5)*{\cdots};
(-3,0)*{};(53,0)*{}**\dir{-};
(-3,0)*{};(-3,-10)*{}**\dir{-};
(-3,-10)*{};(53,-10)*{}**\dir{-};
(53,-10)*{};(53,0)*{}**\dir{-};
(55,-7)*{.};
\endxy
\]

 For $1\leq i \leq r-1, \ 1 \leq j
\leq s-1$, $1 \leq k \leq r$, $1 \leq l \leq s$, we define the
following $(r,s)$-bead tangles:

${\beginpicture
\setcoordinatesystem units <0.78cm,0.39cm>
\setplotarea x from 0 to 9, y from -1 to 4
\put{$\sigma_i:= $} at -0.2 1.5

\plot 0.5 0 0.5 3 /
\plot 0.5 3 8.5 3 /
\plot 8.5 3 8.5 0 /
\plot 8.5 0 0.5 0 /

\put{$\bullet$} at  1 0  \put{$\bullet$} at  1 3
\put{$\bullet$} at  2 0  \put{$\bullet$} at  2 3
\put{$\bullet$} at  3 0  \put{$\bullet$} at  3 3
\put{$\bullet$} at  4 0  \put{$\bullet$} at  4 3
\put{$\bullet$} at  5 0  \put{$\bullet$} at  5 3
\put{$\bullet$} at  6 0  \put{$\bullet$} at  6 3
\put{$\bullet$} at  7 0  \put{$\bullet$} at  7 3
\put{$\bullet$} at  8 0  \put{$\bullet$} at  8 3
\put{$\cdots$} at 1.5 1.5
\put{$\cdots$} at 5.5 1.5
\put{$\cdots$} at 7.5 1.5
\put{{\scriptsize$i$}} at 3 4
\put{{\scriptsize$i+1$}} at 4 4
\put{,} at 8.8 1
\plot 1 3 1 0 /
\plot 2 3 2 0 /
\plot 3 3 4 0 /
%\plot 4 3 3 0 /
\plot 3 0 3.4 1.2 /
\plot  3.6 1.8 4 3 /
\plot 5 3 5 0 /
\plot 6 3 6 0 /
\plot 7 3 7 0 /
\plot 8 3 8 0 /
\setdashes  <.4mm,1mm>
\plot 6.5 -1   6.5 4 /
\setsolid
\endpicture}$
${\beginpicture \setcoordinatesystem units <0.78cm,0.39cm>
\setplotarea x from 0 to 9, y from -1 to 4
\put{$\sigma_j^*:= $} at -0.3 1.5

\plot 0.5 0 0.5 3 /
\plot 0.5 3 8.5 3 /
\plot 8.5 3 8.5 0 /
\plot 8.5 0 0.5 0 /

\put{$\bullet$} at  1 0
\put{$\bullet$} at  1 3
\put{$\bullet$} at 2 0
 \put{$\bullet$} at  2 3
 \put{$\bullet$} at  3 0
  \put{$\bullet$} at 3 3
\put{$\bullet$} at  4 0
\put{$\bullet$} at 4 3
\put{$\bullet$} at  5 0
\put{$\bullet$} at  5 3
\put{$\bullet$} at 6 0
\put{$\bullet$} at  6 3
 \put{$\bullet$} at  7 0
 \put{$\bullet$} at 7 3
\put{$\bullet$} at  8 0
 \put{$\bullet$} at 8 3

\put{$\cdots$} at 1.5 1.5
\put{$\cdots$} at 3.5 1.5
\put{$\cdots$} at 7.5 1.5
\put{,} at 8.8 1
\put{{\scriptsize$j$}} at 5 4
\put{{\scriptsize$j+1$}} at 6 4

\plot 1 3 1 0 /
\plot 2 3 2 0 /
\plot 3 3 3 0 /
\plot 4 3 4 0 /
\plot 5 3 6 0 /
%\plot 6 3 5 0 /
\plot 7 3 7 0 /
\plot 8 3 8 0 /

\plot 5 0 5.4  1.2 /
\plot 5.6 1.8  6 3 /

\setdashes  <.4mm,1mm>
\plot 2.5 -1   2.5 4 /
\setsolid
\endpicture}$

${\beginpicture
\setcoordinatesystem units <0.78cm,0.39cm>
%\setplotarea x from 0 to 6, y from -2 to 3
\put{$h := $} at -0.2 1.5
\plot 0.5 0 0.5 3 /
\plot 0.5 3 8.5 3 /
\plot 8.5 3 8.5 0 /
\plot 8.5 0 0.5 0 /
\put{$\bullet$} at  1 0  \put{$\bullet$} at  1 3
\put{$\bullet$} at  3 0  \put{$\bullet$} at  3 3
\put{$\bullet$} at  4 0  \put{$\bullet$} at  4 3
\put{$\bullet$} at  5 0  \put{$\bullet$} at  5 3
\put{$\bullet$} at  6 0  \put{$\bullet$} at  6 3
\put{$\bullet$} at  8 0  \put{$\bullet$} at  8 3

\put{$\cdots$} at 2 1.5
\put{$\cdots$} at 7 1.5
\put{,} at 8.8 1
\put{{\scriptsize $1$}} at 1 4
\put{{\scriptsize $r$}} at 4 4
\put{{\scriptsize $r+1$}} at 5 4
\put{{\scriptsize $r+s$}} at 8 4
\plot 1 3 1 0 /
\plot 3 3 3 0 /
\plot 8 3 8 0 /
\plot 6 3 6 0 /
%\arrow <3 pt> [1,2] from 2 0 to  2 1.7
%\arrow <3 pt> [1,2] from 1.45 0.75 to  1.4 0.75
%\arrow <3 pt> [1,2] from 4 0 to 3.5 1.5
\setdashes  <.4mm,1mm>
\plot 4.5 -1   4.5 4 /
\setsolid
\setquadratic
\plot 4 3 4.5 2 5 3 /
\plot 4 0 4.5 1 5 0  /
\endpicture}$

\vskip3mm
${\beginpicture
\setcoordinatesystem units <0.78cm,0.39cm>
\setplotarea x from 0 to 8, y from -1 to 4
\put{$\mathrm{c}_{k}:=$} at -0.2 1.5
\plot 0.5 0 0.5 3 /
\plot 0.5 3 7.5 3 /
\plot 7.5 3 7.5 0 /
\plot 7.5 0 0.5 0 /

\put{$\bullet$} at  1 0  \put{$\bullet$} at  1 3
\put{$\bullet$} at  2 0  \put{$\bullet$} at  2 3
\put{$\bullet$} at  3 0  \put{$\bullet$} at  3 3
\put{$\bullet$} at  4 0  \put{$\bullet$} at  4 3
\put{$\bullet$} at  5 0  \put{$\bullet$} at  5 3
\put{$\bullet$} at  6 0  \put{$\bullet$} at  6 3
\put{$\bullet$} at  7 0  \put{$\bullet$} at  7 3

\put{$\cdots$} at 1.5 1.5
\put{$\cdots$} at 4.5 1.5
\put{$\cdots$} at 6.5 1.5
\put{{\scriptsize $k$}} at 3 4
\put{,} at 7.8 1
\plot 1 3 1 0 /
\plot 2 3 2 0  /
\plot 3 3 3 0 /
\plot 4 3 4 0 /
\plot 5 3 5 0 /
\plot 6 3 6 0 /
\plot 7 3 7 0 /
%\arrow <3 pt> [1,2] from 3 0 to  3 1.7
\put{\textcircled{1}} at 2.95 2.3
\setdashes  <.4mm,1mm>
\plot 5.5 -1   5.5 4 /
\endpicture}$  \hskip2em
${\beginpicture \setcoordinatesystem units <0.78cm,0.39cm>
\setplotarea x from 0 to 7.5, y from -1 to 4
\put{$\mathrm{c}_l^*:= $} at -0.2 1.5

\plot 0.5 0 0.5 3 /
\plot 0.5 3 7.5 3 /
\plot 7.5 3 7.5 0 /
\plot 7.5 0 0.5 0 /

\put{$\bullet$} at  1 0 \put{$\bullet$} at  1 3
\put{$\bullet$} at  2 0  \put{$\bullet$} at  2 3
\put{$\bullet$} at  3 0 \put{$\bullet$} at  3 3
\put{$\bullet$} at  4 0   \put{$\bullet$} at 4 3
\put{$\bullet$} at  5 0  \put{$\bullet$} at  5 3
\put{$\bullet$} at  6 0   \put{$\bullet$} at  6 3
\put{$\bullet$} at  7 0   \put{$\bullet$} at  7 3

\put{$\cdots$} at 1.5 1.5
\put{$\cdots$} at 3.5 1.5
\put{$\cdots$} at 6.5 1.5
\put{.} at 7.8 1
\put{{\scriptsize $l$}} at 5 4

\plot 1 3 1 0 /
\plot 2 3 2 0  /
\plot 3 3 3 0 /
\plot 4 3 4 0 /
\plot 5 3 5 0 /
\plot 6 3 6 0 /
\plot 7 3 7 0 /
\put{\textcircled{1}} at 4.95 2.3
%\arrow <3 pt> [1,2] from 5 0 to  5 1.7
\setdashes  <.4mm,1mm>
\plot 2.5 -1   2.5 4 /
\endpicture}$

From Reidemeister move II, we obtain the following elements in $\wt{\BT}_{r,s}$:

${\beginpicture
\setcoordinatesystem units <0.75cm,0.39cm>
\setplotarea x from 0 to 9, y from -1 to 4
\put{$\sigma_i^{-1}= $} at -0.7 1.5

\plot 0.5 0 0.5 3 /
\plot 0.5 3 8.5 3 /
\plot 8.5 3 8.5 0 /
\plot 8.5 0 0.5 0 /

\put{$\bullet$} at  1 0  \put{$\bullet$} at  1 3
\put{$\bullet$} at  2 0  \put{$\bullet$} at  2 3
\put{$\bullet$} at  3 0  \put{$\bullet$} at  3 3
\put{$\bullet$} at  4 0  \put{$\bullet$} at  4 3
\put{$\bullet$} at  5 0  \put{$\bullet$} at  5 3
\put{$\bullet$} at  6 0  \put{$\bullet$} at  6 3
\put{$\bullet$} at  7 0  \put{$\bullet$} at  7 3
\put{$\bullet$} at  8 0  \put{$\bullet$} at  8 3
\put{$\cdots$} at 1.5 1.5
\put{$\cdots$} at 5.5 1.5
\put{$\cdots$} at 7.5 1.5
\put{{\scriptsize$i$}} at 3 4
\put{{\scriptsize$i+1$}} at 4 4
\put{,} at 8.8 1

\plot 1 3 1 0 /
\plot 2 3 2 0 /
%\plot 3 3 4 0 /
\plot 4 3 3 0 /
\plot 3 3 3.4 1.8 /
\plot  3.6 1.2 4 0 /
\plot 5 3 5 0 /
\plot 6 3 6 0 /
\plot 7 3 7 0 /
\plot 8 3 8 0 /
\setdashes  <.4mm,1mm>
\plot 6.5 -1   6.5 4 /
\setsolid
\endpicture}$
${\beginpicture \setcoordinatesystem units <0.75cm,0.39cm>
\setplotarea x from 0 to 9, y from -1 to 4
\put{$(\sigma_j^*)^{-1}= $} at -0.8 1.5
\plot 0.5 0 0.5 3 /
\plot 0.5 3 8.5 3 /
\plot 8.5 3 8.5 0 /
\plot 8.5 0 0.5 0 /

\put{$\bullet$} at  1 0 \put{$\bullet$} at  1 3
\put{$\bullet$} at 2 0   \put{$\bullet$} at  2 3
 \put{$\bullet$} at  3 0  \put{$\bullet$} at 3 3
\put{$\bullet$} at  4 0   \put{$\bullet$} at 4 3
\put{$\bullet$} at  5 0   \put{$\bullet$} at  5 3
\put{$\bullet$} at 6 0    \put{$\bullet$} at  6 3
 \put{$\bullet$} at  7 0   \put{$\bullet$} at 7 3
\put{$\bullet$} at  8 0     \put{$\bullet$} at 8 3

\put{$\cdots$} at 1.5 1.5
\put{$\cdots$} at 3.5 1.5
\put{$\cdots$} at 7.5 1.5
\put{.} at 8.8 1
\put{{\scriptsize$j$}} at 5 4
\put{{\scriptsize$j+1$}} at 6 4

\plot 1 3 1 0 /
\plot 2 3 2 0 /
\plot 3 3 3 0 /
\plot 4 3 4 0 /
%\plot 5 3 6 0 /
\plot 6 3 5 0 /
\plot 7 3 7 0 /
\plot 8 3 8 0 /

\plot 5 3 5.4  1.8 /
\plot 5.6 1.2  6 0 /

\setdashes  <.4mm,1mm>
\plot 2.5 -1   2.5 4 /
\setsolid
\endpicture}$

Now we consider the submonoid $\BT'_{r,s}$ of $\wt{\BT}_{r,s}$
generated by $\sigma_i^{\pm 1}, (\sigma_j^*)^{\pm 1}, h, \mathrm{c}_k,
\mathrm{c}_l^*$. We denote by $\BT'_{r,s}(q)$ the monoid algebra of
$\BT'_{r,s}$ over $\C(q)$ and define the algebra  $\BT_{r,s}(q)$ to be
the quotient of $\BT'_{r,s}(q)$ by the following relations (for allowable $i,j$):
\begin{equation} \label{def:I_in_BT}
\begin{aligned}
  & \sigma_i^{-1}=\sigma_i-(q-q^{-1}), \quad  && (\sigma^*_j)^{-1}=\sigma^*_j-(q-q^{-1}), \\
  & h \sigma_{r-1}h=h , \; h^2=0, \quad && h \sigma_1^* h =h, \; h \mathrm{c}_r h=0, \\
  &\mathrm{c}_i^2=-1, \; \mathrm{c}_i \mathrm{c}_j=-\mathrm{c}_j \mathrm{c}_i \  (i \neq j), \;  \mathrm{c}_i \mathrm{c}^*_j =-\mathrm{c}^*_j \mathrm{c}_i  ,  \quad && (\mathrm{c}_i^*)^2=1, \; \mathrm{c}_i^* \mathrm{c}_j^* = - \mathrm{c}_i^* \mathrm{c}_j^* \ (i \neq j).
\end{aligned}
\end{equation}

For simplicity, we identify  the coset of a diagram in $\BT_{r,s}(q)$ with the diagram itself. Note that
we get extra terms when a bead moves along an under-crossing arc.
That is, we have

${\beginpicture
\setcoordinatesystem units <0.9cm,0.45cm>
\setplotarea x from -0.5 to 4, y from -1.5 to 4
%\put{$ < - >$} at -1 1.5

\plot 1 3 2 0 /
\plot 1 0 1.4 1.2 /
\plot 1.6 1.8  2 3 /

\put{\textcircled{1}} at 1.15 0.4
\endpicture}$
${\beginpicture
\setcoordinatesystem units <0.9cm,0.45cm>
\setplotarea x from -0.5 to 4, y from -1.5 to 4
\put{$ < - >$} at -1 1.5

\plot 2 3 1 0 /
\plot 1 3 1.4 1.8 /
\plot  1.6 1.2 2 0 /

\put{$+ \ (q-q^{-1})$} at 3.5 1.5

\plot 5 0 5 3 /
\plot 6 0 6 3 /

\put{\textcircled{1}} at 1.15 0.4
\put{\textcircled{1}} at 4.95 0.3
\endpicture}$

\hskip11em ${\beginpicture
\setcoordinatesystem units <0.9cm,0.45cm>
\setplotarea x from -0.5 to 4, y from -1.5 to 4
\put{$ < - >$} at -1 1.5
\plot 2 3 1 0 /
\plot 1 3 1.4 1.8 /
\plot  1.6 1.2 2 0 /

\put{$+ \ (q-q^{-1})$} at 3.5 1.5

\plot 5 0 5 3 /
\plot 6 0 6 3 /

\put{\textcircled{1}} at 1.95 2.6
\put{\textcircled{1}} at 4.95 0.3
\endpicture}$

\hskip11em ${\beginpicture
\setcoordinatesystem units <0.9cm,0.45cm>
\setplotarea x from -0.5 to 4, y from -1.5 to 4
\put{$ < - >$} at -1 1.5

\plot 1 3 2 0 /
\plot 1 0 1.4 1.2 /
\plot 1.6 1.8  2 3 /
\put{$+ \ (q-q^{-1})\Bigg ( \ $} at 3.5 1.5

\plot 5 0 5 3 /
\plot 6 0 6 3 /

\put{\textcircled{1}} at 1.95 2.6
\put{\textcircled{1}} at 4.95 0.3

\put{$-$} at 6.5 1.5
\plot 7 0 7 3 /
\plot 8 0 8 3 /
\put{\textcircled{1}} at 7.95 0.3
\put{$\Bigg )$} at 8.5 1.5
\endpicture}$,

\noindent
which is equivalent to
$ c_i \sigma_i=\sigma_i c_{i+1} +(q-q^{-1})
(c_i -c_{i+1}).$
Similarly, we obtain
$ c_{i+1} \sigma_{i}^{-1}=\sigma_i^{-1} c_i +(q-q^{-1}) (c_i-c_{i+1}).$
We call $\BT_{r,s}(q)$ the \emph{$(r,s)$-bead tangle algebra} or
simply the \emph{bead tangle algebra}.

In \cite{K}, Kauffman introduced the algebra of tangles
and showed that it is isomorphic to the {\it Birman-Murakami-Wenzl
algebra}. To show that $\BT_{r,s}(q)$ is isomorphic to $\BC$, we will
follow the outline of the argument given in \cite[Thm.~4.4]{K}.

Let $F'_{r,s}$ be the monoid generated by
$t_1^{\pm 1}, t_2^{\pm 1}, \ldots, t_{r-1}^{\pm 1}, (t_1^*)^{\pm 1}, (t_2^*)^{\pm 1}, \ldots, (t_{s-1}^*)^{\pm 1},$
 $e $, $c_1, c_2, \ldots, c_r$ and $c_1^*, c_2^*, \ldots, c_s^*$
with the following defining relations (for $i,j$ in the allowable  range):
\allowdisplaybreaks
\begin{align}
&t_i t_i^{-1}=t_i^{-1}t_i=1, \ \ \ && t^*_j(t_j^*)^{-1}=(t_j^*)^{-1}t_j^*=1, \label{rel:Frs-1}\\
& t_it_{i+1}t_i = t_{i+1} t_i t_{i+1}, \ \ \
&& t_i^*t^*_{i+1}t^*_i = t^*_{i+1} t^*_i t^*_{i+1},\\
& t_i t_j = t_jt_i  \quad (|i-j|>1), \ \ \  &&t^*_i t^*_j = t^*_jt^*_i  \quad (|i-j|>1), \\
& t_i t_j^*=t_j^*t_i, \ \ \ && \\
&e t_j=t_j e \quad (j \neq r-1), && e t_j^*=t_j^* e \quad (j \neq 1 ), \\
& e t_{r-1}^{-1}t_1^*e= e t_{r-1}^{-1}t_1^*e t_1^* t_{r-1}^{-1} = t_{r-1}^{-1} t_1^*e t_{r-1}^{-1}t_1^*e, && \label{rel:Frs-new}\\
& e t_{r-1}(t_1^*)^{-1}e= e t_{r-1} (t_1^*)^{-1} e (t_1^*)^{-1} t_{r-1} && \\
                 \nonumber &  \hskip5.5em   = t_{r-1}(t_1^*)^{-1}e t_{r-1} (t_1^*)^{-1} e, && \\
& e t_{r-1}^{-1}t_1^*e= e t_{r-1} (t_1^*)^{-1} e, && \label{rel:Frs-6} \\
%e t_{r-1}^{-1}t_1^* e t_1^* t_{r-1}^{-1}  =t_{r-1}^{-1} t_1^* e t_1^* t_{r-1}^{-1}  e, && \\
&t_ic_i=c_{i+1}t_i, \ \ \ &&t^*_ic^*_i=c^*_{i+1}t^*_i, \label{rel:Frs-7} \\
&t_ic_j=c_{j}t_i \quad (j \neq i, i+1), \ \ \ &&t^*_ic^*_j=c^*_{j}t^*_i \quad (j \neq i, i+1),\\
& t_ic_j^*=c_{j}^*t_i, \ \ \ && t^*_i c_j=c_{j}t^*_i, \\
& c_r e=c_1^* e, && e c_r =e c_1^*, \\
&c_j e=e c_j \quad ( j \neq r), && c_j^* e =e c_j^* \quad (j \neq 1). \label{rel:Frs-11}
\end{align}

We define a monoid homomorphism $\varphi_{r,s}: F'_{r,s} \rightarrow
\BT'_{r,s}$ by
$$\varphi_{r,s}(t_i^{\pm 1})=\sigma_i^{\pm 1}, \ \ \varphi_{r,s}(( t_j^*)^{\pm 1})=(\sigma_j^*)^{\pm 1}, \ \
\varphi_{r,s}(e)=h, \ \ \varphi_{r,s}(c_i)=\mathrm{c}_i, \  \text{ and }\
\varphi_{r,s}(c_j^*)=\mathrm{c}_j^*.$$   By direct computations, one
can check that $\sigma_i^{\pm 1}, (\sigma_j^*)^{\pm 1}, h, \mathrm{c}_k,
\mathrm{c}_l^*$ satisfy the corresponding defining relations
\eqref{rel:Frs-1} - \eqref{rel:Frs-11} in $\BT'_{r,s}$. Moreover,
$\varphi_{r,s}$ is surjective.

\begin{theorem} \label{th:F'_and_BT'}
 The monoid $F'_{r,s}$ is isomorphic to $\BT'_{r,s}$ as monoids.
\end{theorem}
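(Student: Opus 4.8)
The plan is to show that the surjective monoid homomorphism $\varphi_{r,s}\colon F'_{r,s}\to\BT'_{r,s}$ is injective, following the outline of \cite[Thm.~4.4]{K}. Thus we must prove: if $w_1,w_2\in F'_{r,s}$ satisfy $\varphi_{r,s}(w_1)=\varphi_{r,s}(w_2)$ in $\BT'_{r,s}$, then $w_1=w_2$ in $F'_{r,s}$. The key device is a procedure that reads a word in $F'_{r,s}$ off of a bead tangle diagram. Given such a diagram, first put it in \emph{generic position} with respect to the height function: all crossings, beads, and local extrema of the arcs occur at pairwise distinct heights. Since a horizontal arc must cross the wall while a vertical arc may not, one checks that after a boundary-fixing isotopy the vertical arcs become height-monotone and every local extremum sits at the wall, the extrema occurring in matched pairs at positions $r,r+1$, each pair constituting one occurrence of the tangle $h$. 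Cutting along horizontal lines between consecutive heights then exhibits the diagram as a vertical stack of strips, each of which is, up to boundary-fixing isotopy, exactly one of the tangles $\sigma_i^{\pm1}$, $(\sigma_j^*)^{\pm1}$, $h$, $\mathrm{c}_k$, $\mathrm{c}_l^*$; reading the corresponding generators of $F'_{r,s}$ from the bottom strip to the top strip produces a word. When the diagram is literally the one obtained by stacking the tangles of a word $w$, this procedure returns $w$ (the bead numbers increasing up the diagram correspond to the positions of the $c_i$ and $c_j^*$ in $w$, and $F'_{r,s}$ imposes no relation permuting these). So the stacked diagrams $D_{w_1}$ and $D_{w_2}$ are regularly isotopic, and it suffices to prove that the word read off a generic diagram depends only on the regular isotopy class; then $w_1=\text{read}(D_{w_1})=\text{read}(D_{w_2})=w_2$.

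Two generic diagrams of regularly isotopic tangles are joined by a finite chain of \emph{elementary moves}, and for each of these we must see that it changes the recorded word only by a consequence of the defining relations \eqref{rel:Frs-1}--\eqref{rel:Frs-11} of $F'_{r,s}$. The moves are: (a) an ambient isotopy preserving genericity of the height function, which changes nothing; (b) interchanging the heights of two features supported on disjoint columns, absorbed by the far-commutation relations ($t_it_j=t_jt_i$ and $t_i^*t_j^*=t_j^*t_i^*$ for $|i-j|>1$, $t_it_j^*=t_j^*t_i$, $et_j=t_je$ for $j\ne r-1$, $et_j^*=t_j^*e$ for $j\ne1$, $c_je=ec_j$ for $j\ne r$, $c_j^*e=ec_j^*$ for $j\ne1$, and the analogous crossing--bead commutations); (c) a cusp move sliding a crossing or a bead past a local extremum, or sliding a bead around the hook at the wall, absorbed by $t_ic_i=c_{i+1}t_i$, the remaining bead--crossing relations \eqref{rel:Frs-7}, the relations $c_re=c_1^*e$ and $ec_r=ec_1^*$, and the near-wall relations on line~\eqref{rel:Frs-new} and in \eqref{rel:Frs-6}; (d) a Reidemeister~II move, absorbed by $t_it_i^{-1}=t_i^{-1}t_i=1$; and (e) a Reidemeister~III move, absorbed by the braid relations $t_it_{i+1}t_i=t_{i+1}t_it_{i+1}$ and its starred analogue. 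Establishing this case by case is a finite but lengthy verification, carried out exactly as in \cite[Thm.~4.4]{K} but with the extra bookkeeping forced by the wall and by the beads; granting it, $\varphi_{r,s}$ is an isomorphism.

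The hard part will be the near-wall cusp moves in (c): one must isolate, among all regular isotopies involving the horizontal arc of $e$ together with an adjacent crossing, exactly the relations \eqref{rel:Frs-new}--\eqref{rel:Frs-6} — which are the topological source of line~4 of \eqref{def:BC(q)} and of the \cite{KM} relations recalled in Remark~\ref{rem:original relations in BC(q)} — and confirm that, with the help of the invertibility relations \eqref{rel:Frs-1}, these suffice. By contrast the beads are easy: they contribute only the elementary observation that a marked point on an arc may be slid freely along that arc, past over- and under-crossings alike and around any cap or cup, and this is precisely what the relations on lines \eqref{rel:Frs-7}--\eqref{rel:Frs-11} record.
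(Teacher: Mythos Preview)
Your overall strategy is the same as the paper's: establish injectivity of $\varphi_{r,s}$ by invoking Kauffman's argument \cite{K} for the bead-free part and then handling bead moves separately. However, your treatment of the beads contains a genuine error. You assert that a bead ``may be slid freely along that arc, past over- and under-crossings alike,'' but this is false for the bead tangles defined here: the text immediately after Definition~\ref{def:tangle} explicitly exhibits pairs of diagrams differing only by a bead slid past an under-crossing and emphasizes that they are \emph{not} regularly isotopic. Correspondingly, the relations \eqref{rel:Frs-7}--\eqref{rel:Frs-11} do \emph{not} contain anything of the form $t_i c_{i+1} = c_i t_i$; the relation $t_i c_i = c_{i+1} t_i$ encodes only the slide along the \emph{over}-strand of $\sigma_i$.

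This misunderstanding does not by itself destroy the argument---since sliding past an under-crossing is not a regular isotopy, there is no such elementary move to account for---but it shows that your taxonomy (a)--(e) was not checked against the actual definition. In particular, the height-exchange of a bead with a crossing sharing its column (the move encoded by $t_i c_i = c_{i+1} t_i$ and $t_i^* c_i^* = c_{i+1}^* t_i^*$) fits neither your item~(b), which requires disjoint columns, nor your item~(c), which speaks of cusp moves near local extrema. The paper separates this out explicitly as ``Case~2: a bead moves along an over-crossing arc,'' distinct from ``Case~1'' (bead along a non-crossing arc), and then observes that all remaining bead moves are composites of these two with the bead-free Kauffman moves. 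You should reorganize along these lines and delete the incorrect under-crossing claim.
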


\begin{proof}
  It is suffices to show that $\varphi_{r,s}$ is injective.
  Assume that $\varphi_{r,s}(d) =\varphi_{r,s}(d') \in \BT'_{r,s}$.
  This means that $\varphi_{r,s}(d')$ can be obtained from
  $\varphi_{r,s}(d)$  by a finite sequence of Reidemeister moves II, III and vice versa.

If $\varphi_{r,s}(d')$ can be obtained from $\varphi_{r,s}(d)$
without moving beads, then modifying the proof of \cite[Thm.~
4.4]{K}, we can show that $d'$ can be obtained from $d$ using relations \eqref{rel:Frs-1} - \eqref{rel:Frs-6}.

We consider the various cases in which we need to move the beads.
\smallskip

{\bf Case 1:} A bead moves along a non-crossing arc. We have the
following five cases.

${\beginpicture
\setcoordinatesystem units <0.85cm,0.45cm>
\setplotarea x from 0 to 4, y from -1.5 to 4
%\put{$d_1=$} at -0.1 1.5
%\put{$\bullet$} at  1 3  \put{$\bullet$} at  1 0
%\put{$\bullet$} at  2 3  \put{$\bullet$} at  2 0
%\put{$\bullet$} at  3 3  \put{$\bullet$} at  3 0
%\put{,} at 3.75 1.5

%\plot 0.5 3 3.5 3 /
%\plot 3.5 3 3.5 0 /
%\plot 3.5 0 0.5 0 /
%\plot 0.5 0 0.5 3 /

\plot 1 3  1 0 /
\plot 2 3 3 0 /
\plot 2 0 2.4 1.2 /
\plot 2.6 1.8  3 3 /

\put{\textcircled{1}} at 0.95 0.5

%\setdashes  <.4mm,1mm>
%\plot 2.5 -1   2.5 4 /
%\setsolid
\endpicture}$
 ${\beginpicture
\setcoordinatesystem units <0.85cm,0.45cm>
\setplotarea x from 0 to 4, y from -1.5 to 4
\put{$<->$} at -0.5 1.5
%\put{$\bullet$} at  1 3  \put{$\bullet$} at  1 0
%\put{$\bullet$} at  2 3  \put{$\bullet$} at  2 0
%\put{$\bullet$} at  3 3  \put{$\bullet$} at  3 0
\put{,} at 3.75 1.5

%\plot 0.5 3 3.5 3 /
%\plot 3.5 3 3.5 0 /
%\plot 3.5 0 0.5 0 /
%\plot 0.5 0 0.5 3 /

\plot 1 3  1 0 /
\plot 2 3 3 0 /
\plot 2 0 2.4 1.2 /
\plot 2.6 1.8  3 3 /

\put{\textcircled{1}} at 0.95 2.5

%\setdashes  <.4mm,1mm>
%\plot 2.5 -1   2.5 4 /
%\setsolid
\endpicture}$
%third
${\beginpicture
\setcoordinatesystem units <0.85cm,0.45cm>
\setplotarea x from 0 to 4, y from -1.5 to 4
%\put{$d_1=$} at -0.1 1.5
%\put{$\bullet$} at  1 3  \put{$\bullet$} at  1 0
%\put{$\bullet$} at  2 3  \put{$\bullet$} at  2 0
%\put{$\bullet$} at  3 3  \put{$\bullet$} at  3 0
%\put{,} at 3.75 1.5

%\plot 0.5 3 3.5 3 /
%\plot 3.5 3 3.5 0 /
%\plot 3.5 0 0.5 0 /
%\plot 0.5 0 0.5 3 /

\plot 1 3  1 0 /
\plot 3 3 2 0 /
\plot 2 3 2.4 1.8 /
\plot  2.6 1.2 3 0 /

\put{\textcircled{1}} at 0.95 0.5

%\setdashes  <.4mm,1mm>
%\plot 2.5 -1   2.5 4 /
%\setsolid
\endpicture}$
 ${\beginpicture
\setcoordinatesystem units <0.9cm,0.45cm>
\setplotarea x from 0 to 4, y from -1.5 to 4
\put{$<->$} at -0.5 1.5
%\put{$\bullet$} at  1 3  \put{$\bullet$} at  1 0
%\put{$\bullet$} at  2 3  \put{$\bullet$} at  2 0
%\put{$\bullet$} at  3 3  \put{$\bullet$} at  3 0
\put{,} at 3.75 1.5

%\plot 0.5 3 3.5 3 /
%\plot 3.5 3 3.5 0 /
%\plot 3.5 0 0.5 0 /
%\plot 0.5 0 0.5 3 /

\plot 1 3  1 0 /
\plot 2 0 3 3 /
\plot 2 3 2.4 1.8 /
\plot  2.6 1.2 3 0 /

\put{\textcircled{1}} at 0.95 2.5

%\setdashes  <.4mm,1mm>
%\plot 2.5 -1   2.5 4 /
%\setsolid
\endpicture}$

${\beginpicture
\setcoordinatesystem units <0.85cm,0.45cm>
\setplotarea x from 0 to 4, y from -1.5 to 4
%\put{$d_1=$} at -0.1 1.5
%\put{$\bullet$} at  1 3  \put{$\bullet$} at  1 0
%\put{$\bullet$} at  2 3  \put{$\bullet$} at  2 0
%\put{$\bullet$} at  3 3  \put{$\bullet$} at  3 0
%\put{,} at 3.75 1.5

%\plot 0.5 3 3.5 3 /
%\plot 3.5 3 3.5 0 /
%\plot 3.5 0 0.5 0 /
%\plot 0.5 0 0.5 3 /

\plot 1 3  1 0 /

\put{\textcircled{1}} at 0.95 0.5

\setdashes  <.4mm,1mm>
\plot 2.5 -1   2.5 4 /
\setsolid

\setquadratic
\plot 2 3  2.5 2 3 3 /
\plot 2 0 2.5 1 3 0 /

\endpicture}$
 ${\beginpicture
\setcoordinatesystem units <0.85cm,0.45cm>
\setplotarea x from 0 to 4, y from -1.5 to 4
\put{$<->$} at -0.5 1.5
%\put{$\bullet$} at  1 3  \put{$\bullet$} at  1 0
%\put{$\bullet$} at  2 3  \put{$\bullet$} at  2 0
%\put{$\bullet$} at  3 3  \put{$\bullet$} at  3 0
\put{,} at 3.75 1.5

%\plot 0.5 3 3.5 3 /
%\plot 3.5 3 3.5 0 /
%\plot 3.5 0 0.5 0 /
%\plot 0.5 0 0.5 3 /

\plot 1 0 1 3 /

\setdashes  <.4mm,1mm>
\plot 2.5 -1   2.5 4 /
\setsolid

\setquadratic
\plot 2 3  2.5 2 3 3 /
\plot 2 0 2.5 1 3 0 /

\put{\textcircled{1}} at 0.95 2.5

\endpicture}$
${\beginpicture
\setcoordinatesystem units <0.85cm,0.45cm>
\setplotarea x from 0 to 4, y from -1.5 to 4
%\put{$d_1=$} at -0.1 1.5
%\put{$\bullet$} at  1 3  \put{$\bullet$} at  1 0
%\put{$\bullet$} at  2 3  \put{$\bullet$} at  2 0
%\put{$\bullet$} at  3 3  \put{$\bullet$} at  3 0
%\put{,} at 3.75 1.5

%\plot 0.5 3 3.5 3 /
%\plot 3.5 3 3.5 0 /
%\plot 3.5 0 0.5 0 /
%\plot 0.5 0 0.5 3 /

\plot 1 3  1 0 /

\put{\textcircled{1}} at 2.15 0.5

\setdashes  <.4mm,1mm>
\plot 2.5 -1   2.5 4 /
\setsolid

\setquadratic
\plot 2 3  2.5 2 3 3 /
\plot 2 0 2.5 1 3 0 /

\endpicture}$
 ${\beginpicture
\setcoordinatesystem units <0.85cm,0.45cm>
\setplotarea x from 0 to 4, y from -1.5 to 4
\put{$<->$} at -0.5 1.5
%\put{$\bullet$} at  1 3  \put{$\bullet$} at  1 0
%\put{$\bullet$} at  2 3  \put{$\bullet$} at  2 0
%\put{$\bullet$} at  3 3  \put{$\bullet$} at  3 0
\put{,} at 3.75 1.5

%\plot 0.5 3 3.5 3 /
%\plot 3.5 3 3.5 0 /
%\plot 3.5 0 0.5 0 /
%\plot 0.5 0 0.5 3 /

\plot 1 0 1 3 /

\setdashes  <.4mm,1mm>
\plot 2.5 -1   2.5 4 /
\setsolid

\setquadratic
\plot 2 3  2.5 2 3 3 /
\plot 2 0 2.5 1 3 0 /

\put{\textcircled{1}} at 2.85 0.5

\endpicture}$

${\beginpicture
\setcoordinatesystem units <0.85cm,0.45cm>
\setplotarea x from 0 to 4, y from -1.5 to 4
%\put{$d_1=$} at -0.1 1.5
%\put{$\bullet$} at  1 3  \put{$\bullet$} at  1 0
%\put{$\bullet$} at  2 3  \put{$\bullet$} at  2 0
%\put{$\bullet$} at  3 3  \put{$\bullet$} at  3 0
%\put{,} at 3.75 1.5

%\plot 0.5 3 3.5 3 /
%\plot 3.5 3 3.5 0 /
%\plot 3.5 0 0.5 0 /
%\plot 0.5 0 0.5 3 /

\plot 1 3  1 0 /

\put{\textcircled{1}} at 2.15 2.5

\setdashes  <.4mm,1mm>
\plot 2.5 -1   2.5 4 /
\setsolid

\setquadratic
\plot 2 3  2.5 2 3 3 /
\plot 2 0 2.5 1 3 0 /

\endpicture}$
 ${\beginpicture
\setcoordinatesystem units <0.85cm,0.45cm>
\setplotarea x from 0 to 4, y from -1.5 to 4
\put{$<->$} at -0.5 1.5
%\put{$\bullet$} at  1 3  \put{$\bullet$} at  1 0
%\put{$\bullet$} at  2 3  \put{$\bullet$} at  2 0
%\put{$\bullet$} at  3 3  \put{$\bullet$} at  3 0
\put{.} at 3.75 1.5

%\plot 0.5 3 3.5 3 /
%\plot 3.5 3 3.5 0 /
%\plot 3.5 0 0.5 0 /
%\plot 0.5 0 0.5 3 /

\plot 1 0 1 3 /

\setdashes  <.4mm,1mm>
\plot 2.5 -1   2.5 4 /
\setsolid

\setquadratic
\plot 2 3  2.5 2 3 3 /
\plot 2 0 2.5 1 3 0 /

\put{\textcircled{1}} at 2.85 2.5
\endpicture}$

Observe that the above are equivalent to the following
relations for $j \neq i, i+1$, \  $l\neq r$, \  $m \neq 1$,  and allowable values of $k$:
\begin{align*}
 & \mathrm{c}_j \sigma_i=\sigma_i \mathrm{c}_j , \quad   &&\mathrm{c}_j \sigma^*_k=\sigma^*_k \mathrm{c}_j, \quad
 &&\mathrm{c}_j \sigma_i^{-1}=\sigma_i^{-1} \mathrm{c}_j, \quad
 && \mathrm{c}_j (\sigma^*_k)^{-1}=(\sigma^*_k)^{-1} \mathrm{c}_j,\\
&  \mathrm{c}^*_j \sigma^*_i=\sigma^*_i \mathrm{c}^*_j,  \quad && \mathrm{c}^*_j \sigma_k=\sigma_k \mathrm{c}^*_j,
  \quad  && \mathrm{c}^*_j (\sigma^*_i)^{-1}=(\sigma^*_i)^{-1} \mathrm{c}^*_j, \quad
  && \mathrm{c}^*_j \sigma_k^{-1}=\sigma_k^{-1} \mathrm{c}^*_j,\\
&\mathrm{c}_l h=h\mathrm{c}_l, \quad && \mathrm{c}_m^* h =h \mathrm{c}_m^*,
\quad && \mathrm{c}_r h = \mathrm{c}_1^* h, \quad && h \mathrm{c}_r = h \mathrm{c}_1^*.
\end{align*}
Hence $d$ and $d'$ are related in $F'_{r,s}$.

{\bf Case 2:} A bead moves along an over-crossing arc.  In the following two cases,

${\beginpicture
\setcoordinatesystem units <0.9cm,0.45cm>
\setplotarea x from 0 to 4, y from -1.5 to 4
%\put{$d_1=$} at -0.1 1.5
%\put{$\bullet$} at  1 3  \put{$\bullet$} at  1 0
%\put{$\bullet$} at  2 3  \put{$\bullet$} at  2 0
%\put{$\bullet$} at  3 3  \put{$\bullet$} at  3 0
%\put{,} at 3.75 1.5

%\plot 0.5 3 3.5 3 /
%\plot 3.5 3 3.5 0 /
%\plot 3.5 0 0.5 0 /
%\plot 0.5 0 0.5 3 /

\plot 2 3 3 0 /
\plot 2 0 2.4 1.2 /
\plot 2.6 1.8  3 3 /

\put{\textcircled{1}} at 2.85 0.5

%\setdashes  <.4mm,1mm>
%\plot 2.5 -1   2.5 4 /
%\setsolid
\endpicture}$
 ${\beginpicture
\setcoordinatesystem units <0.9cm,0.45cm>
\setplotarea x from 0 to 4, y from -1.5 to 4
\put{$<->$} at 0 1.5
%\put{$\bullet$} at  1 3  \put{$\bullet$} at  1 0
%\put{$\bullet$} at  2 3  \put{$\bullet$} at  2 0
%\put{$\bullet$} at  3 3  \put{$\bullet$} at  3 0
\put{,} at 3.75 1.5

%\plot 0.5 3 3.5 3 /
%\plot 3.5 3 3.5 0 /
%\plot 3.5 0 0.5 0 /
%\plot 0.5 0 0.5 3 /

\plot 2 3 3 0 /
\plot 2 0 2.4 1.2 /
\plot 2.6 1.8  3 3 /

\put{\textcircled{1}} at 2.15 2.5

%\setdashes  <.4mm,1mm>
%\plot 2.5 -1   2.5 4 /
%\setsolid
\endpicture}$
%third
${\beginpicture
\setcoordinatesystem units <0.9cm,0.45cm>
\setplotarea x from 0 to 4, y from -1.5 to 4
%\put{$d_1=$} at -0.1 1.5
%\put{$\bullet$} at  1 3  \put{$\bullet$} at  1 0
%\put{$\bullet$} at  2 3  \put{$\bullet$} at  2 0
%\put{$\bullet$} at  3 3  \put{$\bullet$} at  3 0
%\put{,} at 3.75 1.5

%\plot 0.5 3 3.5 3 /
%\plot 3.5 3 3.5 0 /
%\plot 3.5 0 0.5 0 /
%\plot 0.5 0 0.5 3 /

%\plot 1 3  1 0 /
\plot 3 3 2 0 /
\plot 2 3 2.4 1.8 /
\plot  2.6 1.2 3 0 /

\put{\textcircled{1}} at 2.15 0.5

%\setdashes  <.4mm,1mm>
%\plot 2.5 -1   2.5 4 /
%\setsolid
\endpicture}$
 ${\beginpicture
\setcoordinatesystem units <0.9cm,0.45cm>
\setplotarea x from 0 to 4, y from -1.5 to 4
\put{$<->$} at 0.5 1.5
%\put{$\bullet$} at  1 3  \put{$\bullet$} at  1 0
%\put{$\bullet$} at  2 3  \put{$\bullet$} at  2 0
%\put{$\bullet$} at  3 3  \put{$\bullet$} at  3 0
%\put{.} at 3.75 1.5

%\plot 0.5 3 3.5 3 /
%\plot 3.5 3 3.5 0 /
%\plot 3.5 0 0.5 0 /
%\plot 0.5 0 0.5 3 /

%\plot 1 3  1 0 /
\plot 3 3 2 0 /
\plot 2 3 2.4 1.8 /
\plot  2.6 1.2 3 0 /

\put{\textcircled{1}} at 2.85 2.5

%\setdashes  <.4mm,1mm>
%\plot 2.5 -1   2.5 4 /
%\setsolid
\endpicture}$

\noindent the corresponding relations are equivalent to
\begin{align*}
\mathrm{c}_{i+1} \sigma_i =\sigma_i \mathrm{c}_i, \quad
\mathrm{c}^*_{i+1} \sigma^*_i =\sigma^*_i \mathrm{c}^*_i, \quad
\mathrm{c}_i \sigma_i^{-1}=\sigma_i^{-1}\mathrm{c}_{i+1}, \quad
\mathrm{c}^*_i
(\sigma^*_i)^{-1}=(\sigma^*_i)^{-1}\mathrm{c}^*_{i+1},
\end{align*}
which implies that  $d$ and $d'$ are related.

The remaining cases appear as a mixture of {\bf Case 1} and {\bf
Case 2}. For instance, a crossing of a horizontal and
a vertical arc is a combination of the following moves.

${\beginpicture
\setcoordinatesystem units <0.9cm,0.45cm>
\setplotarea x from 0 to 4, y from -1.5 to 4
%\put{$d_1=$} at -0.1 1.5

%\put{,} at 3.75 1.5

\plot 1 3  2 0 /

\put{\textcircled{1}} at 1.85 0.5

\setdashes  <.4mm,1mm>
\plot 2.5 -1   2.5 4 /
\setsolid

\setquadratic
\plot 2 3  2.5 2 3 3 /

%\plot 1 0 2 1.2  3 0 /

\plot 1 0  1.2 0.5  1.55 0.9 /
\plot 1.9 1.15 2.5 1 3 0 /
\endpicture}$
%second
${\beginpicture
\setcoordinatesystem units <0.9cm,0.45cm>
\setplotarea x from 0 to 4.5, y from -1.5 to 4
\put{$<->$} at -0.5 1.5

%\put{,} at 3.75 1.5

\plot 1 0  1 3 /

\plot 1 -0.5 2 -3.5 /
\plot 1 -3.5 1.4 -2.3 /
\plot 1.6 -1.7  2 -0.5 /

\plot 3 -0.5 3 -3.5 /

\put{\textcircled{1}} at 1.85 -3

\setdashes  <.4mm,1mm>
\plot 2.5 -4  2.5 4 /
\setsolid

\setquadratic
\plot 2 3  2.5 2 3 3 /

\plot 2 0 2.5 1  3 0 /

%\plot 1 0  1.2 0.5  1.55 0.9 /
%\plot 1.9 1.15 2.5 1 3 0 /
\endpicture}$
%third
${\beginpicture
\setcoordinatesystem units <0.9cm,0.45cm>
\setplotarea x from 0 to 4, y from -1.5 to 4
\put{$<->$} at -0.5 1.5

%\put{,} at 3.75 1.5

\plot 1 0  1 3 /

\plot 1 -0.5 2 -3.5 /
\plot 1 -3.5 1.4 -2.3 /
\plot 1.6 -1.7  2 -0.5 /

\plot 3 -0.5 3 -3.5 /

\put{\textcircled{1}} at 1.15 -1

\setdashes  <.4mm,1mm>
\plot 2.5 -4  2.5 4 /
\setsolid

\setquadratic
\plot 2 3  2.5 2 3 3 /

\plot 2 0 2.5 1  3 0 /

%\plot 1 0  1.2 0.5  1.55 0.9 /
%\plot 1.9 1.15 2.5 1 3 0 /
\endpicture}$

\vskip2em \hskip10em
%fourth
${\beginpicture
\setcoordinatesystem units <0.9cm,0.45cm>
\setplotarea x from 0 to 4, y from -4.5 to 4
\put{$<->$} at -0.5 1.5

%\put{,} at 3.75 1.5

\plot 1 0  1 3 /

\plot 1 -0.5 2 -3.5 /
\plot 1 -3.5 1.4 -2.3 /
\plot 1.6 -1.7  2 -0.5 /

\plot 3 -0.5 3 -3.5 /

\put{\textcircled{1}} at 0.95 2.5

\setdashes  <.4mm,1mm>
\plot 2.5 -4  2.5 4 /
\setsolid

\setquadratic
\plot 2 3  2.5 2 3 3 /
\plot 2 0 2.5 1  3 0 /

%\plot 1 0  1.2 0.5  1.55 0.9 /
%\plot 1.9 1.15 2.5 1 3 0 /
\endpicture}$
%fifth
${\beginpicture
\setcoordinatesystem units <0.9cm,0.45cm>
\setplotarea x from 0 to 4, y from -4.5 to 4
\put{$<->$} at -0.5 1.5

%\put{,} at 3.75 1.5

\plot 1 3  2 0 /

\put{\textcircled{1}} at 1.1 2.5

\setdashes  <.4mm,1mm>
\plot 2.5 -1   2.5 4 /
\setsolid

\setquadratic
\plot 2 3  2.5 2 3 3 /

%\plot 1 0 2 1.2  3 0 /

\plot 1 0  1.2 0.5  1.55 0.9 /
\plot 1.9 1.15 2.5 1 3 0 /
\endpicture},$

\noindent  which can be written as $\mathrm{c}_2
\sigma_1 h = \sigma_1 \mathrm{c}_1 h =\sigma_1 h \mathrm{c}_1$.

In conclusion, when $\varphi_{r,s}(d')$ is obtained from
$\varphi_{r,s}(d)$ by moving beads, $d$ can be transformed to $d'$
by the corresponding relations in \eqref{rel:Frs-7} -
\eqref{rel:Frs-11}.
\end{proof}

Let $F'_{r,s}(q) = \C(q)F'_{r,s}$ be the associated monoid algebra,
and let $R$ be the two-sided ideal of   $F'_{r,s}(q)$ corresponding to the following
relations:
\begin{equation} \label{def:I_in_F}
\begin{aligned}
  & t_i^{-1}=t_i-(q-q^{-1}), \quad  && (t^*_j)^{-1}=t^*_j-(q-q^{-1}), \\
  & e t_{r-1}e=e , \; e^2=0, \quad && e t_1^* e =e, \; e c_r e=0,  \\
  &c_i^2=-1, \; c_i c_j=-c_j c_i  \  (i \neq j), \; c_i c^*_j =-c^*_j c_i. \quad && (c_i^*)^2=1, \; c_i^* c_j^* = - c_i^* c_j^*   \ (i \neq j).
\end{aligned}
\end{equation}
We consider $(t_i)^{\pm 1}, (t_j^*)^{\pm 1}, h$ as the \emph{even}
generators and $c_i, c_j^*$ as the \emph{odd} generators.

We denote  by $F_{r,s}(q)$ the quotient superalgebra
$F'_{r,s}(q)/R$.  For ease of notation, we also use $t_i,t_j^*, h, c_k$ and
$c_l^*$ for the generators of $F_{r,s}(q)$. We note that the
relations in \eqref{def:I_in_BT} correspond to the relations
in \eqref{def:I_in_F} via the map $\varphi_{r,s}$. By the definitions of
$F_{r,s}(q)$ and $\BT_{r,s}(q)$ and Theorem \ref{th:F'_and_BT'}, we
obtain the following corollary.

\begin{corollary}
The superalgebra $F_{r,s}(q)$ is isomorphic to $\BT_{r,s}(q)$ as associative superalgebras.
\end{corollary}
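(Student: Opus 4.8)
The plan is to promote the monoid isomorphism $\varphi_{r,s}\colon F'_{r,s}\iso\BT'_{r,s}$ of Theorem \ref{th:F'_and_BT'} to an isomorphism of the two quotient superalgebras. First I would observe that any isomorphism of monoids extends $\C(q)$-linearly to an isomorphism of the associated monoid algebras; applied to $\varphi_{r,s}$ this yields an algebra isomorphism $\Phi\colon F'_{r,s}(q)\iso\BT'_{r,s}(q)$. I would then check that $\Phi$ is compatible with the $\Z_2$-gradings: the even generators $t_i^{\pm 1},(t_j^*)^{\pm 1},e$ of $F'_{r,s}(q)$ are sent to $\sigma_i^{\pm 1},(\sigma_j^*)^{\pm 1},h$, which carry no beads and so are even in $\BT'_{r,s}$, while the odd generators $c_k,c_l^*$ are sent to $\mathrm{c}_k,\mathrm{c}_l^*$, each having exactly one bead and so odd; since the grading on a monoid algebra generated by $\Z_2$-homogeneous elements is determined on generators, $\Phi$ is an isomorphism of superalgebras.

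Next I would match the defining ideals. Let $R'\subseteq \BT'_{r,s}(q)$ be the two-sided ideal generated by the elements listed in \eqref{def:I_in_BT}, so that by definition $\BT_{r,s}(q)=\BT'_{r,s}(q)/R'$. Using $\varphi_{r,s}(t_i)=\sigma_i$, $\varphi_{r,s}(t_j^*)=\sigma_j^*$, $\varphi_{r,s}(e)=h$, $\varphi_{r,s}(c_k)=\mathrm{c}_k$, $\varphi_{r,s}(c_l^*)=\mathrm{c}_l^*$, and comparing \eqref{def:I_in_F} with \eqref{def:I_in_BT} relation by relation (this is precisely the correspondence noted just before the statement), one sees that $\Phi$ carries each generator of $R$ to the corresponding generator of $R'$, and that $\Phi^{-1}$ carries each generator of $R'$ back into $R$; since $\Phi$ is a ring isomorphism it maps the two-sided ideal generated by a set onto the ideal generated by the image of that set, so $\Phi(R)=R'$. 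Consequently $\Phi$ descends to an isomorphism
\[
  \bar\Phi\colon F_{r,s}(q)=F'_{r,s}(q)/R \;\iso\; \BT'_{r,s}(q)/R'=\BT_{r,s}(q),
\]
which is the assertion of the corollary.

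I do not anticipate a genuine difficulty here: all the substantive work is already contained in Theorem \ref{th:F'_and_BT'}, and the corollary is essentially bookkeeping --- passing from a monoid isomorphism to monoid algebras and then to quotients by matching generating sets of relations. The only steps that need a sentence of care are verifying that the $\Z_2$-gradings agree on generators and confirming, as was indicated in the paragraph preceding the statement, that the relation sets \eqref{def:I_in_F} and \eqref{def:I_in_BT} genuinely correspond under $\varphi_{r,s}$.
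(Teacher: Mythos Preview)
Your proposal is correct and follows exactly the paper's approach: the paper simply notes that the relations in \eqref{def:I_in_BT} correspond to those in \eqref{def:I_in_F} under $\varphi_{r,s}$, and invokes Theorem \ref{th:F'_and_BT'} together with the definitions of the two quotients. Your write-up spells out the bookkeeping (extension to monoid algebras, grading check, matching of ideals) that the paper leaves implicit, but the argument is identical in substance.
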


One can check that  relations \eqref{rel:Frs-1} -
\eqref{rel:Frs-11} and \eqref{def:I_in_F}
include the corresponding relations \eqref{def:BC(q)} if we map $\mst_i, \mst_j, \mse, \msc_k$
and $\msc_l^*$ to $t_i,t_j, h, c_k$ and $c_l^*$, respectively.
Using the relations in \eqref{def:I_in_F} and Remark \ref{rem:original relations in BC(q)},
we obtain that the relations corresponding to \eqref{rel:Frs-new} - \eqref{rel:Frs-6} are also satisfied in $\BC$.
%correspond to the relations \eqref{def:BC(q)} if we map $\mst_i, \mst_j, \mse, \msc_k$
%and $\msc_l^*$ to $t_i,t_j, h, c_k$ and $c_l^*$, respectively.
It follows that $F_{r,s}(q)$ is isomorphic to $\BC$ as associative
superalgebras. Therefore, we obtain the following main result of this
section.

\begin{theorem} \label{th:BC and BT}
  The quantum walled Brauer-Clifford superalgebra $\BC$ is isomorphic to the
 $(r,s)$-bead tangle algebra  %superalgebra
   $\BT_{r,s}(q)$
   %of $(r,s)$-beaded tangles
  as  associative superalgebras.
\end{theorem}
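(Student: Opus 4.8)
The plan is to realize the isomorphism as a composite $\BC \iso F_{r,s}(q) \iso \BT_{r,s}(q)$, where the right-hand isomorphism is the one already supplied by the corollary $F_{r,s}(q)\cong\BT_{r,s}(q)$ (itself deduced from the monoid isomorphism $\varphi_{r,s}\colon F'_{r,s}\iso\BT'_{r,s}$ of Theorem \ref{th:F'_and_BT'} after passing to monoid algebras and taking the quotients by the matching ideals coming from \eqref{def:I_in_F} and \eqref{def:I_in_BT}). So the only real task is to prove $\BC\cong F_{r,s}(q)$, which I would do by checking directly that the two presentations are equivalent.

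First I would define a homomorphism of superalgebras $\Phi\colon\BC\to F_{r,s}(q)$ on generators by $\mst_i\mapsto t_i$, $\mst_j^*\mapsto t_j^*$, $\mse\mapsto h$, $\msc_k\mapsto c_k$, $\msc_l^*\mapsto c_l^*$. For this to be well defined one verifies that every defining relation of $\BC$ in \eqref{def:BC(q)} is a consequence of the relations \eqref{rel:Frs-1}--\eqref{rel:Frs-11} together with those in \eqref{def:I_in_F}: the Hecke quadratic relations $\mst_i^2-(q-q^{-1})\mst_i-1=0$ follow from $t_i^{-1}=t_i-(q-q^{-1})$ in \eqref{def:I_in_F} combined with $t_it_i^{-1}=1$; the Clifford relations, braid relations, far-commutativity, and the $\mse$- and $\msc$-relations are literal matches; and the single relation on line four of \eqref{def:BC(q)} is exactly \eqref{rel:new}, which (using that $t_{r-1}$ and $t_1^*$ commute) is one of the equalities encoded in \eqref{rel:Frs-new}. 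Since $t_i,t_j^*,h,c_k,c_l^*$ generate $F_{r,s}(q)$, the map $\Phi$ is surjective.

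Conversely I would define $\Psi\colon F_{r,s}(q)\to\BC$ by $t_i\mapsto\mst_i$, $t_i^{-1}\mapsto\mst_i^{-1}$, $t_j^*\mapsto\mst_j^*$, $(t_j^*)^{-1}\mapsto(\mst_j^*)^{-1}$, $h\mapsto\mse$, $c_k\mapsto\msc_k$, $c_l^*\mapsto\msc_l^*$, using that $\mst_i$ and $\mst_j^*$ are units in $\BC$ by virtue of their quadratic relations. Here one must check that all relations \eqref{rel:Frs-1}--\eqref{rel:Frs-11} and \eqref{def:I_in_F} hold in $\BC$. Everything is immediate except \eqref{rel:Frs-new}--\eqref{rel:Frs-6}, which package several identities among $\mse$, $\mst_{r-1}^{\pm1}$, $(\mst_1^*)^{\pm1}$ that are not listed in the presentation \eqref{def:BC(q)}. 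These are precisely the identities produced in Remark \ref{rem:original relations in BC(q)}: there the two relations \eqref{rel:original} are derived from \eqref{rel:new} using the commutation $\mst_{r-1}\mst_1^*=\mst_1^*\mst_{r-1}$ together with $\mse\mst_{r-1}^{-1}\mse=\mse$ and $\mse(\mst_1^*)^{-1}\mse=\mse$, and rewriting them in terms of $\mst_{r-1}^{-1}$ and $(\mst_1^*)^{-1}$ yields exactly \eqref{rel:Frs-new}--\eqref{rel:Frs-6}. Once all relations are verified, $\Psi$ is a well-defined homomorphism; since $\Phi$ and $\Psi$ invert each other on generators and the generators generate, they are mutually inverse isomorphisms, so $\BC\cong F_{r,s}(q)$, and composing with $F_{r,s}(q)\cong\BT_{r,s}(q)$ finishes the proof.

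The hard part is the line-four bookkeeping: reconciling the single compact relation \eqref{rel:new} in the presentation of $\BC$ with the geometrically natural but apparently redundant cluster \eqref{rel:Frs-new}--\eqref{rel:Frs-6} forced on $F'_{r,s}$ by the Reidemeister moves. This is exactly where one invokes Remark \ref{rem:original relations in BC(q)} and the invertibility of $\mst_{r-1}$ and $\mst_1^*$; all the remaining relations match term for term, and the passage from the monoid $F'_{r,s}$ to the superalgebra $F_{r,s}(q)$ (monoid algebra, then quotient) is formal.
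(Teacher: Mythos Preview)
Your proposal is correct and follows essentially the same route as the paper: both factor the isomorphism as $\BC\cong F_{r,s}(q)\cong\BT_{r,s}(q)$, with the second isomorphism coming from the corollary to Theorem~\ref{th:F'_and_BT'}, and both establish $\BC\cong F_{r,s}(q)$ by matching the two presentations, invoking Remark~\ref{rem:original relations in BC(q)} to handle the cluster \eqref{rel:Frs-new}--\eqref{rel:Frs-6}. Your write-up is slightly more explicit in setting up mutually inverse maps $\Phi$ and $\Psi$, but the content is the same.
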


\begin{corollary}
    The dimension of $\BT_{r,s}(q)$ over $\C(q)$ is $(r+s)!\,2^{r+s}$.
\end{corollary}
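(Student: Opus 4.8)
The plan is to deduce this dimension count directly from the structural results already in hand, without any new computation. The key observation is that Theorem~\ref{th:BC and BT} provides an isomorphism of associative superalgebras $\BC \cong \BT_{r,s}(q)$, and Corollary~\ref{cor:dim of BC} records that $\dim_{\C(q)} \BC = (r+s)!\,2^{r+s}$. Since an isomorphism of $\C(q)$-algebras preserves $\C(q)$-dimension, the claimed formula follows immediately.

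First I would invoke Theorem~\ref{th:BC and BT} to transport the question about $\BT_{r,s}(q)$ to the corresponding question about $\BC$. Then I would apply Corollary~\ref{cor:dim of BC}, which itself rests on Theorem~\ref{basis} (the monomials in normal form form a basis of $\BC$) together with the count of such monomials coming from \textbf{Step 2} in the proof of \cite[Thm.~5.1]{JK} and \cite[Lem.~1.7]{KM}. Concretely, the argument is the one-line chain
\[
    \dim_{\C(q)} \BT_{r,s}(q) = \dim_{\C(q)} \BC = (r+s)!\,2^{r+s}.
\]

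There is no real obstacle at this stage: all the substantive work has already been carried out, namely establishing the monoid isomorphism $F'_{r,s} \cong \BT'_{r,s}$ (Theorem~\ref{th:F'_and_BT'}), passing to the quotient superalgebras to identify $F_{r,s}(q)$ with $\BT_{r,s}(q)$ and with $\BC$, and exhibiting an explicit basis of $\BC$. If anything, the only point worth a remark is that the chain of isomorphisms $\BT_{r,s}(q) \cong F_{r,s}(q) \cong \BC$ is an isomorphism of superalgebras over the field $\C(q)$, so finite-dimensionality and the precise dimension are transferred faithfully. Hence the corollary is an immediate consequence of Theorems~\ref{th:BC and BT} and \ref{basis} together with Corollary~\ref{cor:dim of BC}.
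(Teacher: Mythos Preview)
Your proposal is correct and is exactly the argument the paper intends: the corollary is stated without proof in the paper because it follows immediately from Theorem~\ref{th:BC and BT} together with Corollary~\ref{cor:dim of BC}, precisely as you explain.
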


\begin{remark} Since $\mathsf{BC}_{r,s}$ is the classical limit of
$\mathsf{BC}_{r,s}(q)$, by Theorem \ref{th:BC and BD} and  Theorem
\ref{th:BC and BT}, we conclude that $\BD_{r,s}$ is the classical
limit of $\BT_{r,s}(q)$.
\end{remark}

%%%%%%%%%%%%%%%%%%%%%%%%%%%%%%%%%%%%%%%%%%%%%%%%%%%%%%%%%%%%%%%%%%%%%%%%%%%%%%%%%%%%%%5
\vskip3mm
\section{The $q$-Schur superalgebra of type $\msQ$ and its dual}

There are two equivalent ways to define the $q$-Schur algebra $\msS_q(n;\ell)$ associated to $\mfU_q(\mfgl(n))$:  either as the image of  $\mfU_q(\mfgl(n))$ in $\End_{\C(q)}((\C(q)^n)^{\otimes \ell})$ or as $\End_{\msH_{\ell}(q)}((\C(q)^n)^{\otimes \ell})$, where $\msH_{\ell}(q)$ is the Hecke algebra (the subalgebra of $\Heckl$ generated by the $\mst_i$, $i=1,\dots, \ell-1$).   Analogous definitions can be considered in our quantum super context, but we are not able to prove that they are equivalent.    Therefore, in order to develop a viable theory, we have settled on the following definition.

\begin{definition}\label{def:qsuperschur}
The  \emph{$q$-Schur superalgebra of type $\msQ$}, denoted $\msS_q(n;r,s)$, is $\End_{\BC}(\V_q^{r,s})$.
\end{definition}

Even when $s=0$, this superalgebra had not been studied until the recent paper \cite{DW}. In this case, it follows from \cite[Thm.~5.3]{Ol} that the next result holds, but we don't know if it is true for arbitrary  $s\ge 1$.

\begin{proposition}
$\msS_q(n;r,0)$ is equal to the image of $\mfU_q(\mfq(n))$ in $\End_{\C(q)}(\V_q^{r,0})$.
\end{proposition}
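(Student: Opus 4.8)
The plan is to reduce the assertion to Olshanski's quantum Schur--Weyl duality for the queer Lie superalgebra. The first step is to identify the algebra whose centralizer is being computed. When $s=0$ the generators $\mst_1^*,\dots,\mst_{s-1}^*$, $\msc_1^*,\dots,\msc_s^*$ and $\mse$ of $\mathsf{BC}_{r,s}(q)$ (Definition \ref{def:BCq}) are absent, so $\mathsf{BC}_{r,0}(q)$ is generated by $\mst_1,\dots,\mst_{r-1},\msc_1,\dots,\msc_r$ subject to the relations \eqref{def:HC(q)}; in other words $\mathsf{BC}_{r,0}(q) = \Heckr$. (This is the $s=0$ case of Corollary \ref{subHC}, and it is consistent with Corollary \ref{cor:dim of BC}, which gives $\dim_{\C(q)}\mathsf{BC}_{r,0}(q)=r!\,2^{r}=\dim_{\C(q)}\Heckr$.) Moreover, by the first sentence of the proof of Theorem \ref{BCqact}, the action of $\mst_i$ and $\msc_i$ on $\V_q^{r,0}=\V_q^{\ot r}$ furnished by Theorem \ref{BCqact} is precisely the $\Heckr$-action on $\V_q^{\ot r}$ constructed by Olshanski in \cite{Ol}. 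Hence, by Definition \ref{def:qsuperschur},
\[
    \msS_q(n;r,0)\;=\;\End_{\mathsf{BC}_{r,0}(q)}\bigl(\V_q^{r,0}\bigr)\;=\;\End_{\Heckr}\bigl(\V_q^{\ot r}\bigr).
\]

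The second step is to invoke Olshanski's double centralizer theorem \cite[Thm.~5.3]{Ol}: for the mutually supercommuting actions of $\mfU_q(\qn)$ and $\Heckr$ on $\V_q^{\ot r}$, each is the full centralizer of the other; in particular the image of $\mfU_q(\qn)$ in $\End_{\C(q)}(\V_q^{\ot r})$ equals $\End_{\Heckr}(\V_q^{\ot r})$. Combining this with the displayed identity gives
\[
    \msS_q(n;r,0)\;=\;\mathrm{image}\bigl(\mfU_q(\qn)\longrightarrow\End_{\C(q)}(\V_q^{r,0})\bigr),
\]
which is the assertion. (Note that, unlike the isomorphism $\Heckr\cong\End_{\mfU_q(\qn)}(\V_q^{\ot r})$, this direction of the double centralizer holds for all $n$.)

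Since \cite[Thm.~5.3]{Ol} does the real work, there is no substantive obstacle; the only point requiring a word of care is the compatibility of the two ways of viewing $\V_q^{\ot r}$ as an $\Heckr$-module — the one obtained by restricting the $\mathsf{BC}_{r,s}(q)$-action of Theorem \ref{BCqact} to the subalgebra generated by $\mst_1,\dots,\mst_{r-1},\msc_1,\dots,\msc_r$, and Olshanski's original one — and this is immediate from the way the action in Theorem \ref{BCqact} was built out of $S$ and $\Iphi$. It is worth emphasizing that this route is available only because $s=0$: for $s\ge1$ no analogue of \cite[Thm.~5.3]{Ol} is known for $\mfU_q(\qn)$ acting on the mixed tensor space $\V_q^{r,s}$ (cf.\ Remark \ref{R:open}), which is precisely why the statement is not asserted for general $s$.
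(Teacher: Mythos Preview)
Your argument is correct and is precisely the approach the paper takes: the paper does not give a formal proof but simply remarks, in the sentence immediately preceding the proposition, that the result ``follows from \cite[Thm.~5.3]{Ol}'' in the case $s=0$. Your write-up supplies the details behind that citation (identifying $\mathsf{BC}_{r,0}(q)$ with $\Heckr$ via the absence of the starred and $\mse$ generators, and matching the action with Olshanski's), so there is nothing to add.
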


There is a third point of view on $q$-Schur algebras adopted for instance in \cite{Do}, which is as duals of certain homogeneous subspaces of quantum matrix algebras. Super analogues of quantum $\msG\msL_n$ were first introduced in \cite{Ma} and more general superalgebras were studied in \cite{HuZh},  where bases were constructed using quantum minors and indexed by standard bitableaux. In this section, we obtain similar results for a quantum matrix superalgebra of type $\msQ$.

Let
\[
    \delta_{i<j}=\begin{cases}1&\text{if }i<j,\\0&\text{otherwise}\end{cases}
\]
and $\delta_{i\pm j}=\delta_{ij}+\delta_{i,-j}$. Also recall that $\eps=q-q^{-1}$.
\begin{definition}
We denote by $\msA_q(n)$ the associative unital algebra over $\C(q)$ generated by $x_{ab}$ and $\bar x_{ab}$ for $1\leq a,b\leq n$, subject to the following relations for any $1\leq a,b,c,d\leq n$ with $a\leq c$:
\begin{eqnarray*}
q^{\delta_{ac}}x_{ab}x_{cd}&=&q^{\delta_{bd}}x_{cd}x_{ab}+\eps\delta_{b<d}\,x_{cb}x_{ad}+\eps\bar x_{cb}\bar x_{ad}, \\
    q^{\delta_{ac}}x_{ab}\bar x_{cd}&=&q^{-\delta_{bd}}\bar x_{cd}x_{ab}-\eps\delta_{d<b}\,\bar x_{cb}x_{ad},   \notag \\
    q^{\delta_{ac}}\bar x_{ab}x_{cd}&=&q^{\delta_{bd}}x_{cd}\bar x_{ab}+\eps\bar x_{cb}x_{ad}+\eps\delta_{b<d}\,x_{cb}\bar x_{ad},    \\
    q^{\delta_{ac}}\bar x_{ab}\bar x_{cd}&=&-q^{-\delta_{bd}}\bar x_{cd}\bar x_{ab}+\eps\delta_{d<b}\,\bar x_{cb}\bar x_{ad}.
\end{eqnarray*}
We define a $\Z$-grading on $\msA_q(n)$ by declaring each generator to have degree 1. We call $\msA_q(n)$  \emph{the quantum matrix superalgebra of type $\msQ$}.
\end{definition}

%\noindent \textbf{Remark:}
\begin{remark}
The quotient of $\msA_q(n)$ by the two-sided ideal generated by the odd elements is isomorphic to the quantum matrix algebra as presented for instance in Section 1.3 of  \cite{BrDK} with $v=q^{-1}$.
\end{remark}

The algebra $\msA_q(n)$ can be viewed as a $q$-deformation of the algebra of polynomial functions on the space $\mcM_n(\msQ)$ of $(2n\times 2n)$-matrices of type $\msQ$ inside $\mcM_{n|n}(\C)$.  A $q$-deformation of the algebra of polynomial functions on $\mcM_{n|n}(\C)$ was first given in \cite{Ma}.

\begin{lemma}
The algebra $\msA_q(n)$ is isomorphic to the unital  associative algebra over $\C(q)$ generated by elements $x_{ij}$ with $i,j\in {\tt I} =\{ \pm 1, \ldots, \pm n \}$, which satisfy the relations $x_{ij} = x_{-i,-j}$ and \begin{equation}
S^{23} X^{12} X^{13} = X^{13} X^{12} S^{23}
\end{equation}
 where $S^{23}$ is the same matrix used in Definition \ref{Uqq}.
\end{lemma}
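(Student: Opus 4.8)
The plan is to unpack the matrix identity $S^{23}X^{12}X^{13}=X^{13}X^{12}S^{23}$ entry by entry and match the resulting quadratic relations, after imposing $x_{ij}=x_{-i,-j}$, with the four displayed families defining $\msA_q(n)$. Write $\msB$ for the algebra of the proposed presentation (generators $x_{ij}$ for $i,j\in{\tt I}$, the symmetry relations $x_{ij}=x_{-i,-j}$, and the matrix relation), put $X=\sum_{i,j\in{\tt I}}x_{ij}\ot E_{ij}$, and form $X^{12},X^{13}$ and $S^{23}=\id\ot S$ in the usual way. Using the symmetry relations, $\msB$ is generated by the $2n^2$ elements $x_{ab}:=x_{a,b}$ and $\bar x_{ab}:=x_{a,-b}=x_{-a,b}$ for $1\le a,b\le n$, with $x_{-a,-b}=x_{ab}$, and the $\Z_2$-parities $|x_{ab}|=0$, $|\bar x_{ab}|=1$ agree with those of $\msA_q(n)$; since the $\Z$-grading is the one in which every generator has degree $1$, it too is respected. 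Thus it suffices to show that, under this identification of generators, the ideal generated by the entries of $S^{23}X^{12}X^{13}-X^{13}X^{12}S^{23}$ coincides with the ideal generated by the four families.

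First I would expand both sides using the explicit formula \eqref{Sfor2} for $S$. From $X^{12}X^{13}=\sum_{i,j,k,l}x_{ij}x_{kl}\ot E_{ij}\ot E_{kl}$ and $X^{13}X^{12}=\sum_{i,j,k,l}x_{ij}x_{kl}\ot E_{kl}\ot E_{ij}$, multiplying by $\id\ot S$ on the left, respectively on the right, and reading off the coefficient of $E_{pq}\ot E_{rs}$ in the last two tensor slots produces, for each $p,q,r,s\in{\tt I}$, a quadratic relation $\Phi_{p,r}^{q,s}$ in $\msB$. The diagonal part of $S$ contributes the terms carrying the exponents $(\delta_{pr}+\delta_{p,-r})(1-2|r|)$ and $(\delta_{qs}+\delta_{q,-s})(1-2|s|)$, while the off-diagonal part $\eps\sum_{i<j}(-1)^{|i|}(E_{ji}+E_{-j,-i})\ot E_{ij}$ of $S$ contributes the $\eps$-correction terms. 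I would then fold all negative indices onto positive ones via $x_{ij}=x_{-i,-j}$, carrying along the signs $(-1)^{|i|}$, and group the relations $\Phi_{p,r}^{q,s}$ according to the signs of $p$ and $r$ (equivalently, according to whether each product $x_{pq}x_{rs}$ becomes $x\,x$, $x\,\bar x$, $\bar x\,x$ or $\bar x\,\bar x$); taking $p,r$ in the roles of $a,c$ with $a\le c$, the relations collapse to exactly the four families in the definition of $\msA_q(n)$, and conversely each listed relation is one of the $\Phi_{p,r}^{q,s}$. The two presentations then have the same generators and the same defining relations, so $\msB\cong\msA_q(n)$.

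The only real obstacle is the combinatorial bookkeeping in this expansion: tracking the sign factors $(-1)^{|i|}$, the ordering constraint $i<j$ on ${\tt I}$ (with $-n<\cdots<-1<1<\cdots<n$) that restricts the off-diagonal sum in $S$, and the delta-exponents $(\delta_{ij}+\delta_{i,-j})(1-2|j|)$, and then checking that the reduced relations reproduce the four families precisely, with none spurious and none missing. No consistency question arises from imposing $x_{ij}=x_{-i,-j}$ together with the matrix relation, since $\msB$ is by definition the quotient of the free algebra on the $x_{ij}$ by the ideal generated by both sets of relations; and the matching computation can be organized by first treating the diagonal cases $p=r$ (where the factor $q^{\delta_{pr}}$ occurs, matching $q^{\delta_{ac}}$ with $a=c$) and then the cases $p<r$ and $p>r$ separately, which is the role played by the hypothesis $a\le c$ in the statement of the four families. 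Once this verification is complete the isomorphism follows formally, and it is automatically graded since every defining relation is homogeneous of degree $2$.
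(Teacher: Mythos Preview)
Your plan is the paper's own approach: the paper's one-line proof defers to the relations $R(ij)$ and $R(ijkl)$ and to the long computation inside the proof of Theorem~\ref{Aqdual}, which does exactly what you propose --- expand the quadratic relations encoded by the matrix identity, impose $x_{ij}=x_{-i,-j}$, and reduce to the four displayed families indexed by $1\le a\le c\le n$.

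There is, however, one step you treat as routine bookkeeping that is in fact the substantive part. Showing that each of the four families occurs among the $\Phi_{p,r}^{q,s}$ gives only the easy inclusion of ideals (a surjection $\msA_q(n)\twoheadrightarrow\msB$). For the reverse inclusion you must show that \emph{every} relation $\Phi_{p,r}^{q,s}$ lies in the ideal generated by those with $0<|p|\le|r|$. Folding signs via $x_{ij}=x_{-i,-j}$ lets you assume $p,r>0$, but it does \emph{not} by itself reduce the case $p>r$ to $p\le r$: the matrix $S$ is not symmetric under swapping the two tensor factors, so $\Phi_{p,r}^{q,s}$ and $\Phi_{r,p}^{s,q}$ are genuinely different relations. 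In the paper this reduction is achieved by a nontrivial linear dependence of the form
\[
(-1)^{|j||l|}q^{\delta_{i\pm k}(-1)^{|i|}}R(ijkl)'+(-1)^{|i||k|}q^{\delta_{j\pm l}(-1)^{|l|}}R(klij)'
=\eps\,(\cdots)\,R(kjil)'+\eps\,(\cdots)\,R(k,-j,i,-l)',
\]
which expresses the ``wrong-order'' relation $R(ijkl)'$ (with $i>k$) in terms of three relations whose first index is $k<i$. Your outline should flag this identity rather than absorb it into ``treating $p<r$ and $p>r$ separately''; without it you have only one direction of the isomorphism.
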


\begin{proof}
This follows from relations \eqref{Rij} and \eqref{Rijkl} below and from the proof of Theorem \ref{Aqdual}.
\end{proof}

\begin{corollary}
The algebra $\msA_q(n)$ is a bialgebra with coproduct $\Delta$ given by \[ \Delta(x_{ij}) = \sum_{\stackrel{k=-n}{k\neq 0}}^n (-1)^{(|i|+|k|)(|j|+|k|)} x_{ik} \otimes x_{kj}. \]
\end{corollary}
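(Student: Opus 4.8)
The plan is to use the $R$-matrix (Faddeev--Reshetikhin--Takhtajan) presentation of $\msA_q(n)$ supplied by the preceding lemma and to carry out the FRT construction in the super setting. Write $X = \sum_{i,j\in{\tt I}} x_{ij}\otimes E_{ij}\in\msA_q(n)\otimes\End_{\C(q)}(\V_q)$. A first observation, which I expect to do most of the sign-bookkeeping work for us, is that each summand $x_{ij}\otimes E_{ij}$ has total $\Z_2$-degree $2(|i|+|j|)\equiv 0$, so $X$ is \emph{even}. In $\msA_q(n)\otimes\msA_q(n)\otimes\End_{\C(q)}(\V_q)$ I would set $X^{13} = \sum_{i,j}x_{ij}\otimes 1\otimes E_{ij}$ and $X^{23} = \sum_{i,j}1\otimes x_{ij}\otimes E_{ij}$ and define $\Delta\colon\msA_q(n)\to\msA_q(n)\otimes\msA_q(n)$ on generators by the requirement $(\Delta\otimes\id)(X) = X^{13}X^{23}$. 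Expanding $X^{13}X^{23}$ using $E_{ik}E_{kj} = E_{ij}$ and the Koszul sign $(-1)^{(|i|+|k|)(|j|+|k|)}$ picked up when $E_{ik}$ is moved past $x_{kj}$ shows that this is exactly the formula $\Delta(x_{ij}) = \sum_{k}(-1)^{(|i|+|k|)(|j|+|k|)}x_{ik}\otimes x_{kj}$ in the statement.

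To see that $\Delta$ is a well-defined algebra homomorphism it suffices, by the lemma, to check that the elements $\Delta(x_{ij})$ satisfy the two families of defining relations of $\msA_q(n)$. For $x_{ij} = x_{-i,-j}$, I would reindex the summation by $k\mapsto -k$ in $\Delta(x_{-i,-j})$ and use $|-a| = |a|$ together with $x_{-a,-b} = x_{ab}$ to recover $\Delta(x_{ij})$. For the matrix relation $S^{23}X^{12}X^{13} = X^{13}X^{12}S^{23}$, put $\widetilde X = \Delta(X)$ and view $\widetilde X$ and $S$ inside $\msA_q(n)\otimes\msA_q(n)\otimes\End_{\C(q)}(\V_q)^{\otimes 2}$ with the two algebra factors in positions $1,2$ and the two endomorphism factors in positions $3,4$; the claim to verify is $S^{34}\widetilde X^{123}\widetilde X^{124} = \widetilde X^{124}\widetilde X^{123}S^{34}$. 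Writing $\widetilde X^{123} = X^{13}X^{23}$ and $\widetilde X^{124} = X^{14}X^{24}$ (with $X^{13}$ the copy of $X$ living in algebra slot $1$ and endomorphism slot $3$, and so on), the pairs $X^{13},X^{14}$ and $X^{23},X^{24}$ each satisfy the original matrix relation in the shape $S^{34}X^{13}X^{14} = X^{14}X^{13}S^{34}$ and $S^{34}X^{23}X^{24} = X^{24}X^{23}S^{34}$, while any two of these partial copies supported on disjoint index sets commute on the nose because $X$ is even. Thus the standard shuffle $S^{34}X^{13}X^{23}X^{14}X^{24} = S^{34}X^{13}X^{14}X^{23}X^{24}$, followed by the two matrix relations, followed by shuffling $X^{13}$ past $X^{24}$, produces $\widetilde X^{124}\widetilde X^{123}S^{34}$ exactly as in the classical case, so $\Delta$ descends to an algebra homomorphism.

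It then remains to build the counit and check the coalgebra axioms. I would define $\varepsilon\colon\msA_q(n)\to\C(q)$ by $\varepsilon(x_{ij}) = \delta_{ij}$, equivalently $(\varepsilon\otimes\id)(X) = \id_{\V_q}$; this respects $x_{ij} = x_{-i,-j}$ at once and respects the matrix relation because substituting $X = \id$ reduces it to $S = S$, so $\varepsilon$ is an algebra homomorphism. Coassociativity $(\Delta\otimes\id)\Delta = (\id\otimes\Delta)\Delta$ and the counit identities $(\varepsilon\otimes\id)\Delta = \id = (\id\otimes\varepsilon)\Delta$ would then be verified on the generators $x_{ij}$: both sides of coassociativity give $\sum_{k,l}(-1)^{\star}x_{ik}\otimes x_{kl}\otimes x_{lj}$ with identical sign exponents $\star$, which is just associativity of the coefficient multiplication, and the counit identities are immediate from the formula for $\Delta$. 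The only genuine obstacle is the Koszul-sign bookkeeping in the matrix-relation step, and the two devices above --- evenness of $X$ (so disjoint-slot factors truly commute) and the reindexing $k\mapsto -k$ for the relation $x_{ij}=x_{-i,-j}$ --- reduce that step to the sign-free classical FRT computation.
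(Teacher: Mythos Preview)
Your argument is correct and is precisely the standard super-FRT construction that the paper is implicitly invoking: the corollary is stated without proof immediately after the $S^{23}X^{12}X^{13}=X^{13}X^{12}S^{23}$ presentation, and your write-up supplies exactly the details one fills in from that presentation. The key device you isolate---that $X$ is even, so disjoint-slot copies such as $X^{14}$ and $X^{23}$ genuinely commute and the classical shuffle $S^{34}X^{13}X^{14}X^{23}X^{24}\to X^{14}X^{24}X^{13}X^{23}S^{34}$ goes through without extra signs---is the right way to control the Koszul bookkeeping, and the reindexing $k\mapsto -k$ for the relation $x_{ij}=x_{-i,-j}$ is exactly what is needed.
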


\begin{theorem}\label{Aqdual}
Let $\msA_q(n,r)$ denote the degree $r$ component of $\msA_q(n)$. There is a vector space isomorphism $\msA_q(n,r)\stackrel{\sim}{\rightarrow}\End_{\Heckr}(\V_q^{\ot r})^*$.   Explicitly, let $\{E_{ij}^{^\vee} \}$ denote the basis of
$\End_{\C(q)}(\V_q)^*$ dual to the natural basis $\End_{\C(q)}(\V_q)$. Define a map $\msA_q(n,1)\rightarrow\End_{\C(q)}(\V_q)^*$ by $x_{ab}\rightarrow E_{ab}^{^\vee},\;\; \bar x_{ab}\rightarrow E_{a,-b}^{^\vee}$. This extends to the map $\msA_q(n,r)\rightarrow\End_{\Heckr}(\V_q^{\ot r})^*$ via the (super) identification
\[
    \left(\End_{\C(q)}(\V_q)^*\right)^{\ot r}\cong\End_{\C(q)}(\V_q^{\ot r})^*
\]
\end{theorem}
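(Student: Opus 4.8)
The plan is to establish the isomorphism in two stages: first show that the proposed map $\msA_q(n,r)\to\End_{\C(q)}(\V_q^{\ot r})^*$ is well defined and lands in the subspace $\End_{\Heckr}(\V_q^{\ot r})^*$, and then show it is bijective by a dimension/duality count. For the first stage, I would work with the generating matrix $X = \sum_{i,j\in{\tt I}} x_{ij}\ot E_{ij}$ (with $x_{ij}=x_{-i,-j}$, $x_{ab}\leftrightarrow E_{ab}^\vee$, $\bar x_{ab}\leftrightarrow E_{a,-b}^\vee$) and verify that the four families of defining relations of $\msA_q(n)$ are exactly the component form of the single matrix identity $S^{23}X^{12}X^{13} = X^{13}X^{12}S^{23}$; this is the content of the Lemma stated just above, so I may simply invoke it. Expanding $S$ via \eqref{Sfor2} and comparing coefficients of $E_{ij}\ot E_{kl}$ produces the relations labeled ``\eqref{Rij} and \eqref{Rijkl}'' referred to in that Lemma's proof, so this part is a bookkeeping check of how the $q^{\delta_{ac}}$, the $\eps\delta_{b<d}$ terms, and the $\bar x$ terms arise from the diagonal and off-diagonal parts of $S$.

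Next I would explain the passage from degree $1$ to degree $r$. Since $\msA_q(n)$ is generated in degree $1$, the degree $r$ component is spanned by products $x_{i_1 j_1}\cdots x_{i_r j_r}$, and the coproduct $\Delta(x_{ij}) = \sum_k (-1)^{(|i|+|k|)(|j|+|k|)} x_{ik}\ot x_{kj}$ shows that $\msA_q(n)$ is a (super)bialgebra; dualizing the $r$-fold coproduct gives a multiplication on $\bigl(\End_{\C(q)}(\V_q)^*\bigr)^{\ot r}\cong \End_{\C(q)}(\V_q^{\ot r})^*$, and the assignment $x_{ab}\mapsto E_{ab}^\vee$, $\bar x_{ab}\mapsto E_{a,-b}^\vee$ extends multiplicatively precisely because the bialgebra relations in $\msA_q(n,r)$ are the pullback along $\Delta^{(r-1)}$ of the degree-$1$ relations. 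Concretely, the surjection $\msA_q(n,r)\twoheadrightarrow \End_{\C(q)}(\V_q^{\ot r})^*$ is dual to an inclusion $\rho^{\ot r}(\mfU_q(\mfq(n)))\hookrightarrow \End_{\C(q)}(\V_q^{\ot r})$, and one checks using the $RTT$-relation \eqref{rttf} defining $\mfU_q(\mfq(n))$ that the image of the map actually annihilates the Hecke--Clifford action: the defining matrix relation for $\msA_q(n)$ is the ``$RTT$'' relation with the roles of $R$-matrix entries and generators interchanged, which is exactly the statement that the $x$'s pair to zero against commutators with $\mst_i$ and $\msc_i$. This is why the image lands in $\End_{\Heckr}(\V_q^{\ot r})^*\subseteq\End_{\C(q)}(\V_q^{\ot r})^*$.

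For the bijectivity, I would use Olshanski's Schur--Weyl duality for the queer case: by \cite{Ol}, for $n$ large the map $\rho_{n,q}^r:\Heckr\to\End_{\mfq(n)}(\V_q^{\ot r})^{\mathrm{op}}$ is an isomorphism, and more to the point there is a double-centralizer statement making $\End_{\Heckr}(\V_q^{\ot r})$ the image of $\mfU_q(\mfq(n))$. Since the proposed map is the dual of the natural surjection from the ``matrix bialgebra'' onto that image, it suffices to match dimensions in each degree: $\dim_{\C(q)}\msA_q(n,r)$ should equal $\dim_{\C(q)}\End_{\Heckr}(\V_q^{\ot r})$. Here I would argue either by exhibiting a spanning set of $\msA_q(n,r)$ indexed by the same combinatorial data (pairs of multi-indices modulo the $\Heckr$-action, i.e. a PBW-type straightening using the relations to order the $x_{ij}$'s) and showing it is also linearly independent, or — more cheaply — by specializing $q\to 1$: at $q=1$ the relations degenerate to the (super)commutation relations of the coordinate superalgebra of $\mcM_n(\msQ)$, whose degree-$r$ dimension is classical, and then use a freeness/flatness argument over $\C[q,q^{-1}]$ (as in Lemma \ref{localring}) to conclude the dimension does not drop at generic $q$.

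The main obstacle I anticipate is precisely this dimension count: proving that the straightening relations do not cause unexpected collapse in $\msA_q(n,r)$, equivalently that $\msA_q(n)$ is a flat $q$-deformation with the expected Hilbert series. The super sign bookkeeping in the relations (the $\bar x\bar x$ relation carries an extra minus sign, and the coproduct has the $(-1)^{(|i|+|k|)(|j|+|k|)}$ factors) makes a direct diamond-lemma argument delicate, so I would lean on the classical limit plus flatness, citing the analogous computations for the quantum matrix superalgebra in \cite{Ma} and \cite{HuZh} and for the non-super quantum matrix algebra in \cite{BrDK}, and deduce the general $\msA_q(n,r)\cong\End_{\Heckr}(\V_q^{\ot r})^*$ from the $q=1$ case together with Olshanski's duality. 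The remaining steps — well-definedness on degree $1$, multiplicativity via the coproduct, and landing in the $\Heckr$-invariants — are then routine verifications from the matrix identities referenced in the Lemma above.
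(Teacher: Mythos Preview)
Your strategy diverges from the paper's and carries a real gap. The paper does \emph{not} argue by well-definedness plus a separate dimension count. Instead it works entirely on the dual side: starting from the free algebra $\msF_q(n)$ on symbols $\varepsilon_{ij}$ ($i,j\in{\tt I}$), it uses the elementary linear-algebra fact (cited from \cite{DDS2}) that $\End_{\Heckr}(\V_q^{\ot r})^*$ is the quotient of $\msF_q(n,r)\cong\End_{\C(q)}(\V_q^{\ot r})^*$ by the ideal generated by the coefficients of the supercommutators $[\Iphi,\,\cdot\,]$ and $[PS,\,\cdot\,]$. These coefficients are written down explicitly as elements $R(ij)$ and $R(ijkl)$; the bulk of the proof is then pure algebra showing, via several identities among the $R(ijkl)'$, that this ideal coincides with the defining ideal of $\msA_q(n)$. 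Once the two ideals are equal, the isomorphism is immediate for \emph{every} $n$ and $r$, with no flatness, no specialization, and no appeal to Olshanski's double centralizer.

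Your route, by contrast, tries to prove bijectivity via $\dim\msA_q(n,r)=\dim\End_{\Heckr}(\V_q^{\ot r})$, and this is where it breaks. The Olshanski duality you invoke identifies $\End_{\Heckr}(\V_q^{\ot r})$ with the image of $\mfU_q(\mfq(n))$ only when $n\ge r$, so at best you would get the theorem in that range; the statement is for all $n,r$. The alternative you propose---specialize to $q=1$ and lift by flatness---is exactly the ``main obstacle'' you flag, and the references you cite (\cite{Ma}, \cite{HuZh}, \cite{BrDK}) treat different algebras and do not furnish a PBW or diamond-lemma result for this particular $\msA_q(n)$. (There is also a slip: the map $\msA_q(n,r)\to\End_{\C(q)}(\V_q^{\ot r})^*$ is not a surjection for $r\ge 2$, so the sentence describing it as ``dual to an inclusion $\rho^{\ot r}(\mfU_q(\mfq(n)))\hookrightarrow\End_{\C(q)}(\V_q^{\ot r})$'' is not right as written.) The paper's direct identification of the relation ideal sidesteps all of this; that is the key idea your proposal is missing.
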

\begin{proof}
Let $\msF_q(n)$ denote the free algebra generated by $\varepsilon_{ij}$ for $i,j\in {\tt I}=\{\pm1,\pm2,\ldots,\pm n\}$, and let $\msF_q(n,r)$ denote the degree $r$ component of $\msF_q(n)$ where each generator has degree 1. Sending $\varepsilon_{ij}$ to $E_{ij}^{^\vee}$,
we obtain a vector space isomorphism $\msF_q(n,r)\cong\End_{\C(q)}(\V_q^{\ot r})^*$.  An element of $\End_{\C(q)}(\V_q^{\ot r})$ will lie in the subspace $\End_{\Heckr}(\V_q^{\ot r})$ if its coefficients satisfy certain relations. We can
obtain $\End_{\Heckr}(\V_q^{\ot r})^*$ by quotienting $\msF_q(n)$ by the same relations.

The generator $\msc_k$ of $\Heckr$ acts on the $k$th tensor factor via the map $\Iphi$ \eqref{eqn:Iphi},  and the generator $\mst_k$ of $\Heckr$ acts on the $k$th and $(k+1)$st factors via the map $PS$  \eqref{eqn:PS}.
We compute the supercommutator of each of these maps with an arbitrary endomorphism.
\begin{eqnarray*}
    \left[\Iphi,\sum_{ij}a_{ij}E_{ij}\right] &=&\sum_{ij}(-1)^{|i|}(a_{-i,-j}-a_{ij})E_{i,-j}.
\end{eqnarray*}
\begin{eqnarray}
    \left[PS,\sum_{ijkl}a_{ijkl}E_{ij}\ot E_{kl}\right]
        &=&\sum_{ijkl}a_{ijkl}\left[(-1)^{|i|+(|i|+|k|)(|i|+|j|)}q^{\delta_{i\pm k}(-1)^{|k|}}E_{kj}\ot E_{il}\right. \notag \\
        &&\;{}+\eps\delta_{i<k}\,E_{ij}\ot E_{kl}+\eps\delta_{-i<k}\,(-1)^{|i|+|k|+|i|+|j|}E_{-i,j}\ot E_{-k,l} \notag \\
        &&\;{}-(-1)^{|l|+(|j|+|l|)(|k|+|l|)}q^{\delta_{j\pm l}(-1)^{|j|}}E_{il}\ot E_{kj} \notag \\
        &&\;{}-\left.\eps\delta_{j<l}\,E_{ij}\ot E_{kl}-\eps\delta_{j<-l}\,(-1)^{|j|+|l|+|k|+|l|}E_{i,-j}\ot E_{k,-l}\right] \notag \\
        &=&\sum_{ijkl}\left[(-1)^{|i|(|j|+|k|)+|j||k|}q^{\delta_{i\pm k}(-1)^{|k|}}a_{ijkl}\right. \label{aijkl} \\
        &&\;{}+\eps\delta_{k<i}\,a_{kjil}-\eps\delta_{k<-i}\,(-1)^{|i|+|j|}a_{-k,j,-i,l} \notag \\
        &&\;{}-(-1)^{|j|(|l|+|i|)+|l||i|}q^{\delta_{j\pm l}(-1)^{|l|}}a_{klij} \notag \\
        &&\;{}-\left.\eps\delta_{j<l}\,a_{kjil}+\eps\delta_{-j<l}\,(-1)^{|i|+|j|}a_{k,-j,i,-l}\right]E_{kj}\ot E_{il}. \notag
\end{eqnarray}
Therefore,  $\End_{\Heckr}(\V_q^{\ot r})^*$ is the degree $r$ component of $\msA_q(n)'=\msF_q(n)/\langle R(ij),R(ijkl)\rangle$ by \cite[Lemma 2.3]{DDS2}, where we have factored out by the ideal generated by the elements
\begin{eqnarray}
    R(ij)&=&\varepsilon_{-i,-j}-\varepsilon_{ij}, \label{Rij}\\
    R(ijkl)&=& q^{\delta_{i\pm k}(-1)^{|k|}} (-1)^{(|i|+|j|)|l|} \varepsilon_{ij}\varepsilon_{kl}  \notag \\
    &&\;{} - q^{\delta_{j\pm l}(-1)^{|l|}}(-1)^{(|i|+|j|)|k|}\varepsilon_{kl}\varepsilon_{ij} \notag \\
        &&\;{}+\eps (-1)^{|k||i|+|k||l|+|j||l|} \left(\delta_{k<i}\,-\delta_{j<l}\right)  \varepsilon_{kj}\varepsilon_{il} \label{Rijkl} \\
        &&\;{}+\eps (-1)^{|k||i|+|k||l|+|j||l|+|k|+|l|} \left(\delta_{k<-i}\,\varepsilon_{-k,j}\varepsilon_{-i,l}-\delta_{-j<l}\,\varepsilon_{k,-j}\varepsilon_{i,-l}\right). \notag
\end{eqnarray}
The element  $R(ijkl)$ was obtained from the right-hand side of \eqref{aijkl} by multiplying it by $(-1)^{|i||j|}$ and by replacing each $a_{ijkl}$ by $(-1)^{(|i|+|j|)(|k|+|l|)} \varepsilon_{ij}\varepsilon_{kl} $. It remains to show that $\msA_q(n)' \cong \msA_q(n)$. Let $\msF_q(n)'$ be the free algebra generated by $x_{ab}$ and $\bar x_{ab}$ for $1\leq a,b\leq n$. For convenience, we define elements $x_{ij}$ and $\bar x_{ij}$ in $\msF_q(n)'$ for
all $i,j\in {\tt I}$ such that
\[
    x_{-i,j}=\bar x_{ij}=x_{i,-j}.
\]
Clearly there is an isomorphism $\msF_q(n)/\langle R(ij)\rangle\cong \msF_q(n)'$ sending $\varepsilon_{ij}$ to $x_{ij}$. Let $R(ijkl)'$ be the image of $R(ijkl)$ in $\msF_q(n)'$. Note that
\begin{eqnarray*}
    R(-i,-j,k,l)'
        &=&(-1)^{|l|(|i|+|j|)}q^{\delta_{i\pm k}(-1)^{|k|}}x_{ij}x_{kl}-(-1)^{|k|(|i|+|j|)}q^{\delta_{j\pm l}(-1)^{|l|}}x_{kl}x_{ij}\\
        &&\;{}+(-1)^{|k||i|+|k||l|+|j||l|+|k|+|l|} \eps\left(\delta_{k<-i}\,-\,\delta_{-j<l}\right)\bar x_{kj}\bar x_{il}\\
        &&\;{}+ (-1)^{|k||i|+|k||l|+|j||l|} \eps\left(\delta_{k<i}\,-\,\delta_{j<l}\right)x_{kj}x_{il}\\
        &=&R(ijkl)'.
\end{eqnarray*}
Similarly, using $q^{\delta_{i\pm k}(-1)^{|k|}}-q^{-\delta_{i\pm k}(-1)^{|k|}}=\delta_{i\pm k}(-1)^{|k|}\eps$,
we have
\begin{align*}
\allowdisplaybreaks
    (-1)^{|i|+|j|} & R(i,j,-k,-l)' = (-1)^{|l|(|i|+|j|)}q^{-\delta_{i\pm k}(-1)^{|k|}}x_{ij}x_{kl}-(-1)^{|k|(|i|+|j|)}q^{-\delta_{j\pm l}(-1)^{|l|}}x_{kl}x_{ij}\\
        &\;{} - (-1)^{|k||i|+|k||l|+|j||l|+|k|+|l|} \eps\left(1-\delta_{i<-k}\,-\,\delta_{i,-k}-1+\delta_{-l<j}\,+\,\delta_{-l,j}\right)\bar x_{kj}\bar x_{il}\\
        &\;{} - (-1)^{|k||i|+|k||l|+|j||l|}  \eps\left(1-\delta_{k<i}\,-\,\delta_{ik}-1+\delta_{j<l}\,+\,\delta_{jl}\right)x_{kj}x_{il}\\
        &=(-1)^{|l|(|i|+|j|)}q^{\delta_{i\pm k}(-1)^{|k|}}x_{ij}x_{kl}-\delta_{i\pm k}(-1)^{|k|+|l|(|i|+|j|)}\eps x_{ij}x_{kl}\\
        &\;{}-(-1)^{|k|(|i|+|j|)}q^{\delta_{j\pm l}(-1)^{|l|}}x_{kl}x_{ij}+\delta_{j\pm l}(-1)^{|k|(|i|+|j|)+|l|}\eps x_{kl}x_{ij}\\
        &\;{}+ (-1)^{|k||i|+|k||l|+|j||l|+|k|+|l|} \eps\left(\delta_{i<-k}\,-\,\delta_{-l<j} +\delta_{i,-k}-\delta_{-l,j} \right)\bar x_{kj}\bar x_{il}\\
        &\;{}+ (-1)^{|k||i|+|k||l|+|j||l|}  \eps\left(\delta_{k<i}\,-\,\delta_{j<l} + \delta_{i,k}-\delta_{j,l} \right)x_{kj}x_{il}\\
        &=R(ijkl)'.
\end{align*}
Observe also that $q^{\delta_{i\pm k}((-1)^{|k|}+(-1)^{|i|})}-1=\delta_{ik}(-1)^{|k|}q^{(-1)^{|k|}}\eps$. Therefore
\begin{eqnarray*}
\allowdisplaybreaks
 (-1)^{|j||l|}q^{\delta_{i\pm k}(-1)^{|i|}}R(ijkl)' +(-1)^{|i||k|}q^{\delta_{j\pm l}(-1)^{|l|}}R(klij)'\hspace{-70mm}\\
        &=&(-1)^{|i||l|}\left(q^{\delta_{i\pm k}((-1)^{|k|}+(-1)^{|i|})}-q^{\delta_{j\pm l}((-1)^{|j|}+(-1)^{|l|})}\right)x_{ij}x_{kl}\hspace{-45mm}\\
        &&\;{}+(-1)^{|k|(|i|+|l|)}q^{\delta_{i\pm k}(-1)^{|i|}}\eps\left(\delta_{k<i}\,-\,\delta_{j<l}\right)x_{kj}x_{il}\hspace{-35mm}\\
        &&\;{}+(-1)^{|k||-i|+|l||-k|}q^{\delta_{i\pm k}(-1)^{|i|}}\eps\left(\delta_{k<-i}\,-\,\delta_{-j<l}\right)\bar x_{kj}\bar x_{il}\hspace{-35mm}\\
        &&\;{}+(-1)^{(|i|+|l|)|j|}q^{\delta_{j\pm l}(-1)^{|l|}}\eps\left(\delta_{i<k}\,-\,\delta_{l<j}\right)x_{il}x_{kj}\hspace{-35mm}\\
        &&\;{}+(-1)^{|i||-j|+|j||-l|}q^{\delta_{j\pm l}(-1)^{|l|}}\eps\left(\delta_{i<-k}\,-\,\delta_{-l<j}\right)\bar x_{il}\bar x_{kj}\hspace{-25mm}\\
        &=&(-1)^{|k|(|i|+|l|)}q^{\delta_{i\pm k}(-1)^{|i|}}\eps\left(\delta_{k<i}\,+\,\delta_{ik}-\delta_{j<l}\right)x_{kj}x_{il}\hspace{-35mm}\\
        &&\;{}+(-1)^{(|i|+|l|)|j|}q^{\delta_{j\pm l}(-1)^{|l|}}\eps\left(\delta_{i<k}\,-\,\delta_{l<j}\,-\delta_{lj}\right)x_{il}x_{kj}\hspace{-35mm}\\
        &&\;{}+\eps\left(\delta_{i<-k}\,-\,\delta_{-j<l}\right)\ \times \hspace{-35mm}\\
        &&\;\;\;\left[(-1)^{|k||-i|+|l||-k|}q^{\delta_{i\pm k}(-1)^{|i|}}\bar x_{kj}\bar x_{il}
            +(-1)^{|i||-j|+|j||-l|}q^{\delta_{j\pm l}(-1)^{|l|}}\bar x_{il}\bar x_{kj}\right]\hspace{-35mm}\\
        &=&\eps\left(1-\delta_{i<k}-\delta_{j<l}\right) \ \times \hspace{-.35mm} \\
        &&\;\;\;  \left[(-1)^{|k|(|i|+|l|)}q^{\delta_{i\pm k}(-1)^{|i|}}x_{kj}x_{il}
            -(-1)^{(|i|+|l|)|j|}q^{\delta_{j\pm l}(-1)^{|l|}}x_{il}x_{kj}\right]\hspace{-35mm}\\
        &&{}+\eps\left(\delta_{i<-k}\,-\,\delta_{-j<l}\right) \ \times \hspace{-35mm}\\
        &&\;\; \left[(-1)^{|k||-i|+|l||-k|}q^{\delta_{i\pm k}(-1)^{|i|}}\bar x_{kj}\bar x_{il}
            +(-1)^{|i||-j|+|j||-l|}q^{\delta_{j\pm l}(-1)^{|l|}}\bar x_{il}\bar x_{kj}\right].
\end{eqnarray*}
On the other hand, note that if $\delta_1,\delta_2\in\{0,1\}$ then $(\delta_1-\delta_2)(1-\delta_1-\delta_2)=\delta_1-\delta_2-\delta_1^2+\delta_2^2=0$, so

\begin{align*}
    \left(1-\delta_{i<k} -\delta_{j<l}\right)R(kjil)' &= \left(1-\delta_{i<k}\,-\,\delta_{j<l}\right) \ \times \\
        &\quad \left[(-1)^{|l|(|k|+|j|)}q^{\delta_{i\pm k}(-1)^{|i|}}x_{kj}x_{il}-(-1)^{|i|(|k|+|j|)}q^{\delta_{j\pm l}(-1)^{|l|}}x_{il}x_{kj}\right]\\
        &\;{}+(-1)^{|i||k|+|i||l|+|j||l|+|i|+|l|}\eps\left(1-\delta_{i<k}\,-\,\delta_{j<l}\right)\left(\delta_{i<-k}\,-\,\delta_{-j<l}\right)\bar x_{ij}\bar x_{kl}.
\end{align*}
Similarly
\begin{eqnarray*}
    R(k,-j,i,-l)'&=&(-1)^{|-l|(|k|+|-j|)}q^{\delta_{i\pm k}(-1)^{|i|}}\bar x_{kj}\bar x_{il}-(-1)^{|i|(|k|+|-j|)}q^{\delta_{j\pm l}(-1)^{|l|}}\bar x_{il}\bar x_{kj}\\
        &&\;{}+(-1)^{|i||k|+|i||l|+|j||l| +|i|+|j|+|l|}\eps\left(1-\delta_{i<k}\,-\,\delta_{j<l}\right)\bar x_{ij}\bar x_{kl}\\
        &&\;{}-(-1)^{|i||k|+|i||l|+|j||l|+|j|}\eps\left(1-\delta_{i<-k}\,-\,\delta_{-j<l}\right)x_{ij}x_{kl},
\end{eqnarray*}
so
\begin{align*}
    -(-1)^{|j||l|+|j|+|i||k|}&\left(\delta_{i<-k}\,-\,\delta_{-j<l}\right)R(k,-j,i,-l)' =\left(\delta_{i<-k}\,-\,\delta_{-j<l}\right) \ \times\hspace{-60mm} \\
        &\;\;\;  \left[(-1)^{|k||-i|+|l||-k|}q^{\delta_{i\pm k}(-1)^{|i|}}\bar x_{kj}\bar x_{il}+(-1)^{|i||-j|+|j||-l|}q^{\delta_{j\pm l}(-1)^{|l|}}\bar x_{il}\bar x_{kj}\right]\\
        &\;{}+(-1)^{|-i||-l|}\eps\left(1-\delta_{i<k}\,-\,\delta_{j<l}\right)\left(\delta_{i<-k}\,-\,\delta_{-j<l}\right)\bar x_{ij}\bar x_{kl}.
\end{align*}
Thus,
\begin{eqnarray*}
    (-1)^{|j||l|}q^{\delta_{i\pm k}(-1)^{|i|}}R(ijkl)'+(-1)^{|i||k|}q^{\delta_{j\pm l}(-1)^{|l|}}R(klij)'\hspace{-80mm}\\
        &=&\eps(-1)^{|k||i|+|l||j|}\left(1-\delta_{i<k}\,-\,\delta_{j<l}\right)R(kjil)' \\
        &&-\eps(-1)^{|j|+|j||l|+|k||i|}\left(\delta_{i<-k}\,-\,\delta_{-j<l}\right)R(k,-j,i,-l)'.
\end{eqnarray*}
These dependencies amongst the $R(ijkl)'$ imply that
\[
    {\rm span}_{\C(q)}\{R(ijkl)'\mid i,j,k,l\in {\tt I}\}
        ={\rm span}_{\C(q)}\{R(ijkl)'\mid i,j,k,l\in {\tt I}\text{ with }0<i\leq k\}.
\]
Note that if $0<i\leq k$, then $R(ijkl)'$ simplifies to
\begin{eqnarray*}
    R(ijkl)'&=&q^{\delta_{i\pm k}}(-1)^{|j||l|}x_{ij}x_{kl}-q^{\delta_{j\pm l}(-1)^{|l|}}x_{kl}x_{ij}\\
        &&\;{}-(-1)^{|j||l|}\eps\delta_{j<l}\,x_{kj}x_{il} - (-1)^{|j||l|+|l|}\eps\delta_{-j<l}\,\bar x_{kj}\bar x_{il}.
\end{eqnarray*}
By considering the four possibilities for $(|j|,|l|)$, we obtain the four relations in the definition of $\msA_q(n)$. Thus
\[
    \msA_q(n)'=\msF_q(n)'/\langle R(ijkl)'\mid i,j,k,l\in {\tt I}\text{ with }0<i\leq k\rangle=\msA_q(n).
\]
\end{proof}

It is also possible to prove an analogue of Theorem \ref{Aqdual} when $s\neq 0$ using the coalgebra $\msA_q(n;r,s)$ that we define immediately below. Set $x_{ab}^*= x_{ab}$ and $\bar x_{ab}^*= \sqrt{-1}\, \bar x_{ab}$. The relations in the following definition are super analogues of those in Lemma 4.1 of \cite{DDS2}.

\begin{definition}\label{wtA} Abbreviate $x_{ab} \ot 1$ and $1 \ot x_{ab}^*$ in $\msA_q(n) \otimes_{\C(q)} \msA_{q^{-1}}(n)$ by $x_{ab}$ and $x_{ab}^*$, respectively.
Then $\wt{\msA}_q(n)$ is defined to be the quotient of $\msA_q(n) \otimes_{\C(q)} \msA_{q^{-1}}(n)$ by the two-sided ideal generated by the following: \\
\begin{eqnarray} \label{wtA1}
&&\sum_{e=1}^n q^{2e} \left( x_{eb} x_{ed}^* - \bar
{x}_{eb} \bar{x}_{ed}^* \right)  \; \text{for $b\neq d$}, \ \ \ \sum_{e=1}^n q^{2e} \left(x_{eb} \bar
{x}_{ed}^* -  \bar{x}_{eb} x_{ed}^* \right),\\
\label{wtA2}
&&\sum_{e=1}^n  \left( x_{ae} x_{ce}^* +  \bar{x}_{ae} \bar{x}_{ce}^* \right) \; \text{for $a \neq c$}, \ \ \  \sum_{e=1}^n  \left( x_{ae} \bar
{x}_{ce}^* - \bar{x}_{ae} x_{ce}^* \right)  ,\\
\label{wtA3}
&&\sum_{e=1}^n q^{2e-2b} \left(  x_{eb} x_{eb}^* -
 \bar{x}_{eb} \bar{x}_{eb}^* \right)
- \sum_{e=1}^n \left( x_{ae}x_{ae}^*
+ \bar{x}_{ae} \bar{x}_{ae}^*\right) .
\end{eqnarray}
\end{definition}

%if $b\neq d$, \begin{equation} \label{wtA1}
%\sum_{e=1}^n  q^{2e}  x_{eb} x_{ed}^* = 0 = \sum_{e=1}^n  q^{2e}  \bar{x}_{eb} \bar{x}_{ed}^*, \;\; \sum_{e=1}^n  q^{2e}  x_{eb} \bar{x}_{ed}^* = 0 = \sum_{e=1}^n  q^{2e}  \bar{x}_{eb} x_{ed}^*;
%\end{equation}
%if $a\neq c$,
%\begin{equation} \label{wtA2}
%\sum_{e=1}^n q^{2e} x_{ae} x_{ce}^* = 0 = \sum_{e=1}^n q^{2e} \bar{x}_{ae} \bar{x}_{ce}^*, \;\; \sum_{e=1}^n q^{2e} x_{ae} \bar{x}_{ce}^* = 0 = \sum_{e=1}^n q^{2e} \bar{x}_{ae} x_{ce}^*,
%\end{equation}
%\begin{equation} \label{wtA3}
%\sum_{e=1}^n q^{2e} x_{eb} x_{eb}^*   = \sum_{e=1}^n q^{2e} x_{ae} x_{ae}^*, \;\; \sum_{e=1}^n q^{2e} \bar{x}_{eb} \bar{x}_{eb}^* = 0  = \sum_{e=1}^n q^{2e} \bar{x}_{ae} \bar{x}_{ae}^*.
%\end{equation}

\begin{definition}
$\msA_q(n;r,s)$ is defined as the subspace of $\wt{\msA}_q(n)$ spanned by monomials in the generators of bidegree $(r,s)$; that is,  of degree $r$ in the generators $x_{ab},\bar{x}_{ab}$ and of degree $s$ in the generators $x_{ab}^*,\bar{x}_{ab}^*$.
\end{definition}

\begin{theorem}
There is a vector space isomorphism $\msA_q(n;r,s) \stackrel{\sim}{\rightarrow} \End_{\BC}(\V_q^{r,s})^*$.
\end{theorem}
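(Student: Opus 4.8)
The plan is to run the argument of Theorem \ref{Aqdual} one more time, now enlarged to accommodate supercommutation with the extra generators $\mst_i^*$, $\msc_i^*$ and $\mse$ of $\BC$. As there, identify $\V_q^*$ with $\V_q$ via $v_i\mapsto\w_i$, so that $\End_{\C(q)}(\V_q^{r,s})^*\cong\bigl(\End_{\C(q)}(\V_q)^*\bigr)^{\ot(r+s)}$, and let $\msF$ be the free algebra on symbols $\varepsilon_{ij}$ ($i,j\in{\tt I}$) attached to the first $r$ slots together with symbols $\varepsilon^*_{ij}$ attached to the last $s$ slots. Sending $\varepsilon_{ij}$ and $\varepsilon^*_{ij}$ to the corresponding dual matrix units $E_{ij}^{\vee}$ identifies the bidegree $(r,s)$ component $\msF(n;r,s)$ of $\msF$ with $\End_{\C(q)}(\V_q^{r,s})^*$. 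Because $\BC$ is generated by $\mst_i,\msc_i,\mst_i^*,\msc_i^*,\mse$, an endomorphism of $\V_q^{r,s}$ lies in $\End_{\BC}(\V_q^{r,s})$ exactly when its coefficient array satisfies the finitely many linear conditions expressing supercommutation with these generators; dually, $\End_{\BC}(\V_q^{r,s})^*$ is the quotient of $\msF(n;r,s)$ by the span of the transposed relations, the dualization being made precise by the super analogues of \cite[Lemma~2.3]{DDS2} and \cite[Lemma~4.1]{DDS2}.

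First I would dispose of the two Hecke-Clifford blocks. The generators $\mst_i$ ($1\le i\le r-1$) and $\msc_i$ ($1\le i\le r$) act on the first $r$ tensor factors through $PS$ \eqref{eqn:PS} and $\Iphi$ \eqref{eqn:Iphi}, so the computations \eqref{Rij}--\eqref{Rijkl} already carried out in the proof of Theorem \ref{Aqdual} show that the relations they impose reduce the $\varepsilon_{ij}$ to the degree $r$ component $\msA_q(n,r)$, under $\varepsilon_{ij}\mapsto x_{ij}$ (with the index convention of Theorem \ref{Aqdual}). The generators $\mst_i^*$ ($1\le i\le s-1$) and $\msc_i^*$ ($1\le i\le s$) act on the last $s$ factors through $P^{\tt T}S^{\tt T}$ and $\Iphi^{\tt T}$ of Theorem \ref{BCqact}; repeating the same computation, and using the rescaling $x^*_{ab}=x_{ab}$, $\bar x^*_{ab}=\sqrt{-1}\,\bar x_{ab}$ of Definition \ref{wtA} to absorb the supertranspose and the twist by the antipode, reduces the $\varepsilon^*_{ij}$ to $\msA_{q^{-1}}(n,s)$. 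After these two blocks, the quotient of $\msF(n;r,s)$ is the bidegree $(r,s)$ part of $\msA_q(n)\ot_{\C(q)}\msA_{q^{-1}}(n)$.

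It remains to impose $\mse$. Here $\mse$ acts on factors $r,r+1$ as $\cap\cup$, with $\cap(1)=\sum_{i\in{\tt I}}v_i\ot\w_i$, $\cup(v_i\ot\w_j)=(-1)^{|i|}q^{2i(1-2|i|)-(2n+1)}\delta_{ij}$, and $\cap\cup=q^{-(2n+1)}\sum_{i,j}(-1)^{|i||j|}q^{2j(1-2|j|)}E_{ij}\ot E_{ij}$. I would compute the supercommutator $[\,\id^{\ot(r-1)}\ot\cap\cup\ot\id^{\ot(s-1)},\,X\,]$ for a general $X$; since $\cap\cup$ factors through the one-dimensional space $\C(q)$, its vanishing organizes into a ``column'' condition and a ``row'' condition each relating the $r$-th and $(r+1)$-st coefficient blocks of $X$, together with a single scalar ``trace'' identity. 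Dualizing these term by term produces exactly the three families of relations \eqref{wtA1}, \eqref{wtA2}, \eqref{wtA3}. Combining everything, $\End_{\BC}(\V_q^{r,s})^*$ is the quotient of the bidegree $(r,s)$ part of $\msA_q(n)\ot_{\C(q)}\msA_{q^{-1}}(n)$ by \eqref{wtA1}--\eqref{wtA3}, which is $\msA_q(n;r,s)$ by definition.

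The main obstacle will be the $\mse$ step: one must compute $[\cap\cup,X]$ explicitly, tracking the super signs, the powers of $q$ produced by $\cup$, and the $\sqrt{-1}$-twist, and then check that the resulting linear conditions match \eqref{wtA1}--\eqref{wtA3} coefficient by coefficient, and in particular that the $q\mapsto q^{-1}$ identification used in the starred block is compatible with these mixed relations so that no spurious relations appear. The remaining steps are a routine extension of the proof of Theorem \ref{Aqdual} and of \cite[\S4]{DDS2}.
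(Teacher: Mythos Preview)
Your proposal is correct and follows essentially the same approach as the paper's own proof: reduce to the bidegree $(r,s)$ part of $\msA_q(n)\ot_{\C(q)}\msA_{q^{-1}}(n)$ by invoking the computations of Theorem \ref{Aqdual} separately on the two Hecke--Clifford blocks, and then compute the supercommutator $[\cap\cup,\,\cdot\,]$ to obtain the mixed relations \eqref{wtA1}--\eqref{wtA3}. The paper's proof is terser but structurally identical, writing down the commutator $[\cap\cup,\sum a_{ijkl}E_{ij}\ot E_{kl}]$ explicitly and then splitting into the three cases $i\neq k$, $j\neq l$, and $i=k,\,j=l$ (together with the sign choices of $i,j,k,l$) to recover exactly the three families of relations you anticipate.
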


\begin{proof}
Most of the necessary computations are already contained in the proof of Theorem \ref{Aqdual}. Recall that $\cap\cup = q^{-(2n+1)} \sum_{i,j\in {\tt I}} (-1)^{|i||j|} q^{2j(1-2|j|)} E_{ij} \otimes E_{ij}$. We only need to explain where the new relations \eqref{wtA1}-\eqref{wtA3} in Definition \ref{wtA} come from,  and for this we have to compute the following commutator: \begin{eqnarray*}
\left[ \cap\cup , \sum_{ijkl} a_{ijkl} E_{ij} \otimes E_{kl} \right] & = & q^{-(2n+1)} \sum_{ijl} \left( \sum_p a_{pjpl} q^{2p(1-2|p|)}  (-1)^{|p|+|p||j|} \right) (-1)^{|i||j|} E_{ij} \otimes E_{il} \\
& & - q^{-(2n+1)} \sum_{ikl} \left( \sum_p a_{ipkp} q^{2l(1-2|l|)} (-1)^{|p|+|k||p|} \right) (-1)^{|k||l|} E_{il} \otimes E_{kl}  \\
\end{eqnarray*}
This leads to the relation \[ \delta_{ik} \sum_p (-1)^{|p||l|+|j||l|} q^{2p(1-2|p|)} x_{pj} x_{pl}^*  =\delta_{jl}\sum_p (-1)^{|i||k|+|i||p|}  q^{2l(1-2|l|)} x_{ip} x_{kp}^* . \] The relations \eqref{wtA1}-\eqref{wtA3} can be deduced by considering the cases $i\neq k$ and $j=l$; \  $i=k$ and $j\neq l$; \  $i=k$ and $j=l$;  and also the various possibilities for the signs of $i,j,k,l$. 
Note that $x_{ab}^*=x_{-a,-b}^*$ and
$x_{a,-b}^*=-x_{-a,b}^*=\bar{x}_{ab}^*$
for $1 \le a,b \le n$, because of the relation $[\Phi^T, \sum_{ij} a_{ij}E_{ij}]=\sum_{ij}\big( a_{ij}-(-1)^{|i|+|j|}a_{-i,-j} \big)E_{-i,j}$.
\end{proof}

\begin{remark} The algebra $\msA_q(n;r,s)$ could possibly be used  to prove the open problem of showing  the surjectivity of  the map $\mfU_q(\mfq(n)) \rightarrow \End_{\BC}(\V_q^{r,s})$ (see Remark \ref{R:open}). The following line of reasoning was applied in \cite{DDS2} to establish a similar surjectivity result for
 $\mfU_q(\mfgl(n))$ over a quite general base ring.  First,  it might be possible to obtain a homomorphism $\tau: \msS_q(n;r',0) \lra \msS_q(n;r,s)$ for some $r'$ (possibly $r' =r+s$)  via some embedding of the mixed tensor space into $\C(n|n)^{\otimes r'}$. The surjectivity of the map $\mfU_q(\mfq(n)) \rightarrow \End_{\BC}(\V_q^{r,s})$ then would follow from the surjectivity of $\tau$, which is equivalent to the injectivity of $\tau^*: \msS_q(n;r,s)^* \lra \msS_q(n;r',0)^*$. As suggested by \cite{DDS2}, the injectivity of $\tau^*$ could perhaps be shown  by constructing bases of $\msA_q(n;r,s)$ and $\msA_q(n;r',0)$ using super analogues of bideterminants. For the quantum general linear supergroup, this was accomplished  in \cite{HuZh}. \end{remark}

\begin{remark} The algebra  $\msA_q(n)$ is a bialgebra, so it can be enlarged to a Hopf algebra, the so-called \emph{Hopf envelope} of $\msA_q(n)$.  This is explained in \cite{Ma} in the context of the quantum general linear supergroup attached to $\mfgl({m|n})$. Moreover,  it is proved in \cite{Ma} that the Hopf envelope of $\msA_q(m|n)$, the quantized algebra of functions on the space of super matrices of size $(m|n)\times (m|n)$, is isomorphic to the localization of $\msA_q(m|n)$ with respect to the quantum Berezinian, a super analogue of the quantum determinant. This localization is the quantized algebra of functions $\C_q[\mathsf{GL}_{m|n}]$. This raises the following question: is the Hopf envelope of $\msA_q(n)$ isomorphic to the localization of $\msA_q(n)$ with respect to an appropriate super version of the quantum determinant? Such a localization could be thought of as a quantized algebra of functions for the supergroup of type $\msQ_n$.  \end{remark}

%%%%%%%%%%%%%%%%%%%%%%%%%%%%%%%%%%%%%%%%%%%%%%%%%%%%%%%%%%%%%%%%%%%%%%%%%%%%%%%%%%%%%%%%%%%%%%%%%%%%%%

\vskip1em

\end{document}